\newtheorem{theorem}{Theorem}
\newtheorem{lemma}{Lemma}
\newtheorem{proposition}{Proposition}
\newtheorem{corollary}{Corollary}
\newtheorem{remark}{Remark}
\numberwithin{equation}{section}
\numberwithin{theorem}{section}
\numberwithin{lemma}{section}
\numberwithin{proposition}{section}
\numberwithin{corollary}{section}
\numberwithin{remark}{section}
\newcommand{\CI}{{\mathcal I}}
\newcommand{\bg}{\begin{equation}}
\newcommand{\ed}{\end{equation}}
\newcommand{\bga}{\begin{eqnarray}}
\newcommand{\eda}{\end{eqnarray}}
\begin{document}

\title{Stability of time-dependent Navier-Stokes flow \\
and algebraic energy decay}
\author{Toshiaki Hishida\thanks{
Partially supported by JSPS Grant-in-aid for Scientific Research 24540169} \\
Graduate School of Mathematics, Nagoya University \\
Nagoya 464-8602 Japan \\
\texttt{hishida@math.nagoya-u.ac.jp} \\
and \\
Maria E. Schonbek \\
Department of Mathematics \\
University of California Santa Cruz \\
Santa Cruz, California 95064 USA \\
\texttt{schonbek@ucsc.edu}}
\date{}
\maketitle

\begin{abstract}
Let $V=V(x,t)$ be a given time-dependent Navier-Stokes flow
of an incompressible viscous
fluid in the whole space $\mathbb R^n$ $(n=3,4)$.
 Assume such V to be  small in 
$L^\infty(0,\infty; L^{n,\infty})$,
where $L^{n,\infty}$ denotes the weak-$L^n$ space.
The energy stability of this basic flow $V$
with respect to any initial disturbance in $L^2_\sigma$
has been established by Karch, Pilarczyk and Schonbek \cite{KPS}.
In this paper we study,  under reasonable conditions, the algebraic rates of
energy decay of disturbances as $t\to\infty$.
\end{abstract}

\section{Introduction}
\label{intro}

We consider  the asymptotic stability of the Navier-Stokes flow 
for an incompressible viscous fluid,
 governed by the system
\begin{equation}
\partial_t w+w\cdot\nabla w=\Delta w-\nabla\pi+g, \qquad
\mbox{div $w$}=0,
\label{NS-0}
\end{equation}
where $w=(w_1(x,t),\cdots,w_n(x,t))$ and $\pi(x,t)$ respectively
stand for the unknown velocity and pressure of the fluid,
while $g=(g_1(x,t),\cdots,g_n(x,t))$
denotes a given external force.
The results  obtained here also  study the  algebraic rates 
of decay of energy ($L^2$ decay) of disturbance as $t\to\infty$.
The basic flow   considered  is  time-dependent 
with minimal assumptions.
In this case the theory has been less developed than for stationary basic flows.

Given  $V=V(x,t)$ a  time-dependent Navier-Stokes flow
in the whole space $\mathbb R^n$ $(n=3,4)$ 
we address stability questions for weak solutions around $V$ without
any smallness condition on the initial $L^2_\sigma$ disturbances.
As important examples of this basic flows $V$, we have  in mind:
forward self-similar solution, 
time-periodic (even almost periodic) solution and global mild
(eventually strong) solution of the Cauchy problem.
It is reasonable to assume that
$V\in L^\infty(0,\infty; L^{n,\infty})$
as well as its weak*-continuity with values in $L^{n,\infty}$,
where $L^{n,\infty}$ denotes the weak-$L^n$ space.
In order to cover the self-similar solution,
we adopt $L^{n,\infty}$ rather than  the usual $L^n$.

The energy stability of small $V$ in the  above mentioned class
has been recently established by
Karch, Pilarczyk and Schonbek \cite{KPS}.
However  they did not obtain an algebraic rate of decay.
It would be interesting to find how fast the disturbance decays
as $t\to\infty$ when the initial disturbance possesses for instance
better summability at space infinity.
Yamazaki \cite{Y} studied another sort of stability of small $V$
in essentially the same class as above with respect to
small initial disturbance taken from $L^{n,\infty}$
and derived the decay rate of disturbance in
$L^{r,\infty}$ for $r\in (n,\infty)$.
Concerning the energy stability of another kind of time-dependent flow,
Kozono \cite{Ko} established the stability of large weak solution
$V\in L^q(0,\infty; L^r)$ of the Serrin class
with respect to any initial disturbance in $L^2_\sigma$,
where $2/q+n/r=1$ and $2\leq q<\infty$ ($n<r\leq \infty$).
Indeed the basic flow is not assumed to be small,
but it covers neither self-similar solution nor
time-periodic solution.
There are some other results on the stability of time-dependent flow,
see the references cited in \cite{KPS}, \cite{Ko} and \cite{Y}.

If we impose both more decay at space infinity and better local regularity
on the basic flow $V$
(for example, $L^{n,\infty}$ is replaced by $L^q\cap L^r$ with some 
$q<n<r$), 
then the stability analysis becomes less difficult.
We will discuss the stability of $V$ being only in
the critical space such as
$L^{n,\infty}$ from the viewpoint of scaling transformation under which 
the Navier-Stokes system \eqref{NS-0} is invariant.
We refer to \cite{KPS} for several critical spaces.
Besides $L^{n,\infty}$ there are some other critical spaces which contain
homogeneous functions of degree $-1$, so that one can include the 
self-similar solution to the class of flows under consideration.
The reason why we  will work with solutions in  $L^{n,\infty}$  
is closely related to insight
due to Yamazaki \cite{Y}, that will be mentioned later.
We are mainly interested in the physically relevant case $n=3$
as in \cite{KPS}, but the argument for $n=4$ is completely parallel.
For $n\geq 5$
the  justification of the integral equation for weak solutions does not work,
see \eqref{integ},
but the results in this paper can be easily extended 
to such higher dimensional case
when assuming more regularity of the basic flow $V$, see Reamrk \ref{reg-2}.
Since we prefer results under minimal assumptions on $V$,
we restrict ourselves to the case $n=3,4 $.
For $n=2$, the summability $L^{2,\infty}$
is not enough at space infinity and thus one needs a different approach.

When $V=0$, the energy decay problem for 
\eqref{NS-0} (with $g=0$) was raised 
in his celebrated paper \cite{L} by Leray and, fifty years later, 
it was solved by Kato \cite{Ka} and by Masuda \cite{Ma}, independently.
In both papers \cite{Ka} and \cite{Ma}
it was clarified that the strong energy inequality
\begin{equation}
\begin{split}
&\|w(t)\|_2^2+2\int_s^t\|\nabla w\|_2^2\,d\tau\leq \|w(s)\|_2^2 \\
&\mbox{for a.e. $s\geq 0$, including $s=0$, and all $t\geq s$}
\end{split}
\label{SEI-0}
\end{equation}
plays an important role in deduction of energy decay,
where $\|\cdot\|_2$ denotes the $L^2$-norm.
Later on, the study of specific rates of energy decay was well
developed by Schonbek \cite{S}, \cite{S2}, Kajikiya and Miyakawa \cite{KM},
Wiegner \cite{W} and Maremonti \cite{Mar}.
In particular, in \cite{W}  it was established that if the initial disturbance
$w(0)\in L^2_\sigma$ satisfies 
that the underlying Stokes flow decays like
$\|e^{t\Delta}w(0)\|_2=O(t^{-\alpha})$
as $t\to\infty$ for some $\alpha >0$,
then every weak solution $w(t)$ to \eqref{NS-0} (with $g=0$) 
satisfying the strong energy inequality \eqref{SEI-0}
enjoys the decay property
\begin{equation}
\|w(t)\|_2=O(t^{-\beta}), \qquad 
\beta=\min\left\{\frac{n}{4}+\frac{1}{2},\;\alpha\right\}
\label{optimal}
\end{equation}
as $t\to\infty$.
Note that there is no uniform decay rate for $w(0)\in L^2_\sigma$,
see Schonbek \cite{S2} and Hishida \cite{H}.
The decay rate $t^{-(\frac{n}{4}+\frac{1}{2})}$
is actually optimal, no matter how fast $e^{t\Delta}w(0)$ decays,
unless special structures   such as symmetry are assumed.
In fact, 
estimate from below
\begin{equation}
\|w(t)\|_2\geq Ct^{-(\frac{n}{4}+\frac{1}{2})}
\label{below}
\end{equation}
for large $t$ was studied by 
Schonbek \cite{S2}, \cite{S4} and Miyakawa and Schonbek \cite{MS}.
They gave a necessary and sufficient condition for \eqref{below}
via asymptotic expansion of weak solutions
deduced by Fujigaki and Miyakawa \cite{FM},
from which one can find the leading term for $t\to \infty$.
There  also is some literature on the case where $V$ is a
stationary solution,
see Miyakawa and Sohr \cite[Section 5]{MSo},
Borchers and Miyakawa \cite[Section 4]{BM3}, \cite[Section 4]{BM4}, 
Chen \cite{C}, Miyakawa \cite{Mi97}, \cite{Mi98},
Bjorland and Schonbek \cite{BjS},
Karch and Pilarczyk \cite{KP} 
and the references therein (\cite{MSo}, \cite{BM3}, \cite{BM4} and \cite{C}
studied the energy stability of exterior flows).

Given a basic Navier-Stokes flow $\{V,\Pi\}$ satisfying \eqref{NS-0}, 
we perturb it by a solenoidal vector $u_0 \in L^2_{\sigma}$.
The disturbance will be  denoted  by $\{u,p\}$ and we set
$w=V+u, \,\pi=\Pi+p$ in \eqref{NS-0} then
$\{u,p\}$ satisfies %
\begin{equation}
\begin{split}
&\partial_t u+u\cdot\nabla u
=\Delta u-V\cdot\nabla u-u\cdot\nabla V-\nabla p, \\
&\mbox{div $u$}=0, \\
&u(\cdot,0)=u_0
\end{split}
\label{NS}
\end{equation}
in $\mathbb R^n\times (0,\infty), n=3, 4$.
Provided  the basic flow
$V$ is small enough in $L^\infty(0,\infty; L^{n,\infty})$, we first obtain,
for every weak solution satisfying the
strong energy inequality \eqref{SEI} associated with \eqref{NS}, the energy decay
\[ 
\|u(t)\|_2\to 0,\;\mbox{as}\; t\to\infty.
\]
This may be regarded as a complement  to \cite{KPS}, in which a weak
solution with energy decay property has been actually constructed,
while  our result does not depend on how the weak solution was  constructed.
We note that weak solution obtained by \cite{KPS} fulfills the strong energy 
inequality \eqref{SEI}.

The main result of this paper   is  the obtention  of   algebraic rates
of energy decay. Specifically we establish 
 that if the underlying linearized flow,
 denoted by $v(t)$,
with the same initial velocity $u_0$ decays like $t^{-\alpha}$
in $L^2$ as $t\to\infty$
for some $\alpha>0$, then every weak solution satisfying  the strong energy
inequality \eqref{SEI} decays at the rate  
\begin{equation}
\|u(t)\|_2=O(t^{-\beta}), \;\;\; 
\beta=\min\left\{\frac{n}{4}+\frac{1}{2}-\varepsilon, \;\alpha\right\}\;\mbox{as}\; 
t\to\infty,
\label{almost}
\end{equation}
where $\varepsilon >0$ is arbitrarily small,
provided 
\[
\|V\|_{L^\infty(0,\infty; L^{n,\infty})} \leq\delta
\]
with some small constant $\delta=\delta(\varepsilon)>0$.
We also find the decay rate of 
$\|u(t)-v(t)\|_2$
to see that the linear part $v(t)$ is the leading term of $u(t)$,
as long as $v(t)$ does not decay faster than $t^{-\alpha}$ with
$\alpha <n/4+1/2$.
In view of \eqref{optimal} for $V=0$, our result
seems to be ``almost'' sharp, however, we cannot get rid of
the term ``almost'' on account of presence of $\varepsilon >0$
arising from technical reasons.
Our result is enough to obtain the typical decay rate
$t^{-\frac{n}{2}(\frac{1}{q}-\frac{1}{2})}$
when $u_0\in L^q\cap L^2_\sigma$ for some $q\in [1,2)$.
This improves Corollary 2.3 of Karch and Pilarczyk \cite{KP},
who derived the decay rate
above for the case $q\in (6/5,2)$ when $V$ is the Landau solution,
that is the homogeneous stationary Navier-Stokes flow 
of degree $-1$ in $\mathbb R^3$.
The main result stated above seems to be new even for the case where 
$V\in L^{n,\infty}$ is the stationary flow.
In this case the  we recall the related result by Miyakawa
\cite[Theorem 7.1]{Mi97}, who proved that 
if the stationary flow $V$ has finite energy with
$\sup |x||V(x)|<(n-2)/2$
and if the initial disturbance $u_0\in L^2_\sigma$ fulfills
$\|e^{t\Delta}u_0\|_2=O(t^{-\alpha})$ as $t\to \infty$ for some $\alpha>0$,
then there exists a weak solution satisfying the same
decay property as in \eqref{optimal}.
Indeed the rate is sharp, but Miyakawa's condition $V\in L^2$
is more  restrictive then  in our case.
 
The proof of our main result relies on the Fourier splitting method,
which can be traced back to Schonbek \cite{S},
combined with analysis of large time behavior of solutions to 
the initial value problem for the linearized equation
\begin{equation}
\begin{split}
&\partial_tu-\Delta u+V\cdot\nabla u+u\cdot\nabla V+\nabla p=0, \\
&\mbox{div $u$}=0, \\
&u(\cdot,s)=f
\end{split}
\label{linear}
\end{equation}
in $\mathbb R^n\times (s,\infty)$,
where $s\geq 0$ is the given initial time.
Since $V$ is time-dependent then system  is nonautonomous,
we have to consider \eqref{linear} with such initial time.
As in \cite {S},
we split the energy of \eqref{NS} into time-dependent low and high
frequency regions in the Fourier side.
The decay rate is determined from the low frequency part
and our argument is based on the integral equation
\eqref{integ} for weak solutions in terms of the evolution operator,
that is, the operator which provides a unique solution to \eqref{linear}.

For the linear analysis  technical difficulties  prevent us from using 
standard methods.
If the basic flow were stationary,
then the decay property of the semigroup generated by the linearized operator
would be deduced from spectral analysis;
however, we don't have enough knowledge about such analysis when
the basic flow is time-dependent
(unless it is a time-periodic flow).
We are thus forced to deal with the term
$V\cdot\nabla u+u\cdot\nabla V$
as perturbation from the heat semigroup
$e^{t\Delta}$ by means of the integral equation \eqref{IE}.
But, no matter what $r\in (\frac{n}{n-1},\infty)$ may be, we see that
\[
\|e^{t\Delta}P\mbox{div $(u\otimes V+V\otimes u)$}\|_{r,\infty}
\leq Ct^{-1}\|u\|_{r,\infty}\|V\|_{n,\infty},
\]
where $P$ denotes the Helmholtz projection and
$\|\cdot\|_{r,\infty}$ is the $L^{r,\infty}$-norm.
This suggests that it is delicate whether the perturbation term
is subordinate to $\Delta u$ even though $V$ is small.
A remarkable observation by Yamazaki \cite{Y} is that
a real interpolation technique recovers the convergence of the integral,
see \eqref{yamazaki},
in the Lorentz space $L^{r,1}$.
Among other Lorentz spaces, his argument works only in
$L^{r,1}$, however, one can use the duality relation
$(L^{r,1})^*=L^{r^\prime,\infty}$,
where $1/r^\prime+1/r=1$.
This enables us to construct a decaying solution of \eqref{linear},
but we are faced with another problem.
If we defined the evolution operator $T(t,s)$ by this solution,
then it would seem to be difficult to determine the adjoint $T(t,s)^*$
directly,
which is needed to justify the integral equation \eqref{integ}
for weak solutions.
To get around this difficulty, 
by the use of the triplet 
$H^1_\sigma\subset L^2_\sigma\subset (H^1_\sigma)^*$,
we first recall
the theory of J. L. Lions \cite{JL} to show the generation
of the evolution operator $T(t,s)$.
This possesses less regularity properties than standard evolution operator
of parabolic type, see Tanabe \cite{Ta},
which cannot be applied to \eqref{linear}
because of lack of H\"older continuity of $V$ in time up to $t=0$.
We have also no information about large time behavior of $T(t,s)$,
but an advantage is that we are able to
analyze similarly a backward perturbed
Stokes system whose solution operator exactly coincides with $T(t,s)^*$.
We next identify the decaying solution mentioned above with $T(t,s)f$ and,
as a consequence, conclude the $L^q$-$L^r$ decay estimate of $T(t,s)$,
see \eqref{L^q-L^r}.
Look at \eqref{optimal} for the case $V=0$,
in which the rate $t^{-n/4}$ reminds us of $L^1$-$L^2$ decay estimate
and the additional $t^{-1/2}$ comes from the divergence structure 
of the nonlinearity
$u\cdot\nabla u=\mbox{div $(u\otimes u)$}$.
This observation suggests the importance of analysis of the
composite operator
$T(t,s)P\mbox{div}$,
see \eqref{evo-div}, to find the decay rate \eqref{almost}.
If we had the $L^1$ estimate
\begin{equation}
\|T(t,s)P\mbox{div $F$}\|_1\leq C(t-s)^{-1/2}\|F\|_1,
\label{div-L^1}
\end{equation}
then unpleasant $\varepsilon >0$ could be removed in \eqref{almost}
by using it with $F=u\otimes u$.
But \eqref{div-L^1} still remains open.
In this paper we deduce the estimate of the operator
$T(t,s)P\mbox{div}$ in $L^q$ only for $q\in (1,\infty)$.

The paper is organized as follows.
The next section gives our main results (three theorems) for both
\eqref{linear} and \eqref{NS}.
We divide the linear analsis into two sections.
In section \ref{evo} we develop the $L^2$ theory of 
the backward perturbed Stokes system
as well as \eqref{linear} to show Theorem \ref{generate-1}.
Section \ref{decay-evo} is devoted to the large time behavior 
of the evolution operator obtained in the previous section
to prove Theorem \ref{linear-main}.
In the final section, after justification of the integral equation
\eqref{integ} for weak solutions, we prove Theorem \ref{en-decay}
on algebraic decay of energy of the Navier-Stokes flow.

\section{Main results}
\label{result}

We first  introduce  the notation.
For $1\leq q\leq \infty$ the usual Lebesgue space is denoted by 
$L^q(\mathbb R^n)$ with norm $\|\cdot\|_q$.
By $H^k(\mathbb R^n)$ we denote the standard $L^2$-Sobolev spaces.
For $1<q<\infty$ and $1\leq r\leq\infty$ the Lorentz space
can be constructed via real interpolation
\[
L^{q,r}(\mathbb R^n)=\left(L^1(\mathbb R^n), L^\infty(\mathbb R^n)\right)_{1-1/q,r}
\]
with norm $\|\cdot\|_{q,r}$.
But it is usually defined using   rearrangement and
average functions.
For details, see Bergh and L\"ofstr\"om \cite{BL}.
The only cases  needed in this paper are  when $r=1,\infty$. We recall that 

\[ L^{q,1}(\mathbb R^n)\subset L^q(\mathbb R^n)\subset L^{q,\infty}(\mathbb R^n)\] 
and the duality relation
\[ L^{q,1}(\mathbb R^n)^*=L^{q^\prime,\infty}(\mathbb R^n),\;\mbox{where}\; 1/q^\prime+1/q=1. \]
It is well known that 
\[ f\in L^{q,\infty}(\mathbb R^n)\;\mbox{ if and only if}\;
\sup_{s>0}\, s\,|\{x\in\mathbb R^n; |f(x)|>s\}|^{1/q}<\infty,
\]
where $|\cdot|$ stands for the Lebesgue measure.
Note that the homogeneous function of degree $-n/q$ belongs to 
$L^{q,\infty}(\mathbb R^n)$ and that
$C_0^\infty(\mathbb R^n)$,
the class of smooth functions with compact support,
is not dense there.
Hereafter, we abbreviate $L^q(\mathbb R^n)$ as $L^q$ and so on.
We also adopt the same symbols for vector and scalar function spaces
if there is no confusion.

Let us introduce the solenoidal function spaces.
By $C_{0,\sigma}^\infty$ we denote the class of all vector fields $f$
which are in $C_0^\infty$ and satisfy $\mbox{div $f$}=0$.
Let $1\leq q<\infty$.
The spaces $L^q_\sigma$ and $H^1_\sigma$ denote the completion 
of the class $C_{0,\sigma}^\infty$ in $L^q$ and $H^1$, respectively.
Then $L^q_\sigma=\{f\in L^q; \mbox{div $f$}=0\}$ and
$H^1_\sigma=L^2_\sigma\cap H^1$.
We introduce the operator 
$P=(\delta_{jk}+R_jR_k)_{1\leq j,k\leq n}$,
where $R_j$ denotes the Riesz transform.
For $1<q<\infty$ the operator $P$ is bounded in $L^q$ and it is
the projection onto $L^q_\sigma$ associated with the Helmholtz decomposition
\[
L^q=L^q_\sigma\oplus \{\nabla p;\, p\in L^q_{loc}\}, \qquad 1<q<\infty.
\]
It is the orthogonal decomposition when $q=2$.
Since we have
$L^{q,r}=\left(L^{q_0}, L^{q_1}\right)_{\theta,r}$
by the reiteration theorem in the interpolation theory
provided that
\begin{equation}
1<q_0<q<q_1<\infty, \qquad
\frac{1-\theta}{q_0}+\frac{\theta}{q_1}=\frac{1}{q}, \qquad
1\leq r\leq \infty,
\label{expo-interpo}
\end{equation}
the operator $P$ is bounded in the Lorentz space $L^{q,r}$, too.
As in \cite[section 5]{BM4}, we define the solenoidal Lorentz space by
$L^{q,r}_\sigma:=P(L^{q,r})$
for $1<q<\infty$ and $1\leq r\leq \infty$.
Then we find
$L^{q,r}_\sigma=\left(L^{q_0}_\sigma, L^{q_1}_\sigma\right)_{\theta,r}$
for the exponents satisfying \eqref{expo-interpo} and the duality relation
$(L^{q,1}_\sigma)^*=L^{q^\prime,\infty}_\sigma$, where
$1/q^\prime+1/q=1$.

Let $E$ be a Banach space with norm $\|\cdot\|_E$.
Then ${\cal L}(E)$ stands for the Banach space of
all bounded linear operators in $E$.
Let $I$ be a interval in $\mathbb R$ and $1\leq r\leq \infty$.
By $L^r(I; E)$ we denote the Banach space
which consists of measurable functions $u$ with values in $E$ such that the norm $\|u\|_{L^r(I; E)}<\infty $, where 
\[
\|u\|_{L^r(I; E)}=
\left\{
\begin{array}{ll}
\displaystyle{ \left(\int_I\|u(t)\|_E^r\,dt\right)^{1/r}}, \qquad &r<\infty, \\
\displaystyle{\mbox{esssup}\;\{\|u(t)\|_E;\, t\in I\}}, &r=\infty.
\end{array}
\right.
\]

We denote by $C$ various positive constants
(indepedent of time and data under consideration)
which may change from line to line.

Throughout this paper the basic flow $V(x,t)$ is assumed to satisfy
\begin{equation}
\begin{split}
&\mbox{div $V$}=0, \quad
V\in L^\infty(0,\infty; L^{n,\infty}) 
\cap C_w([0,\infty); L^{n,\infty})  \\
&\mbox{with small $\displaystyle{\|V\|:=\sup_{t>0}\|V(t)\|_{n,\infty}}$}
\end{split}
\label{basic} 
\end{equation}
where $C_w([0,\infty); L^{n,\infty})$
consists of all weak*-continuous functions
with values in $L^{n,\infty}$.
We intend to mention how small $\|V\|$ should be in each statement.

Let us first consider the linearized equation \eqref{linear}.
For each $t\geq 0$, we define the bilinear form
$a(t;\cdot,\cdot): H^1_\sigma\times H^1_\sigma \to\mathbb R$ by
\begin{equation}
a(t; u,w)
:=\langle \nabla u, \nabla w\rangle
+\langle V(t)\cdot\nabla u, w\rangle 
-\langle V(t)\otimes u, \nabla w\rangle \qquad (u,w\in H^1_\sigma)
\label{bilinear}
\end{equation}
where $\langle\cdot,\cdot\rangle$ stands for various duality pairings
as well as the scalar product in $L^2_\sigma$, in particular, the last term means
\[
\langle V(t)\otimes u, \nabla w\rangle
:=\sum_{j,k=1}^n\int_{\mathbb R^n}V_j(t)u_k\partial_kw_j dx.
\]
The form \eqref{bilinear} with time-independent $V$ was used
in Karch and Pilarczyk \cite{KP} to define the linearized operator.
By \eqref{basic} 
together with the generalized H\"older and Sobolev inequalities
we easily find
\begin{equation}
\begin{split}
|\langle V(t)\otimes u, \nabla w\rangle|
&\leq \|V(t)\otimes u\|_2\|\nabla w\|_2 \\
&\leq C\|V(t)\|_{n,\infty}\|u\|_{2_*,2}\|\nabla w\|_2 \\
&\leq c_0\|V(t)\|_{n,\infty}\|\nabla u\|_2\|\nabla w\|_2
\end{split}
\label{HS}
\end{equation}
where $1/2_*=1/2-1/n$. Obviously $\langle V(t)\cdot\nabla u, w\rangle$
can be estimated in the same way.
Thus
\begin{equation} 
|a(t; u,w)|\leq (1+2c_0\|V\|)\|\nabla u\|_2\|\nabla w\|_2 \qquad
(u, w\in H^1_\sigma).
\label{conti}
\end{equation}
Note that \eqref{conti} does not hold when $n=2$.
By $L(t): H^1_\sigma\to (H^1_\sigma)^*$
we denote the linear oparator
associated with $a(t; \cdot,\cdot)$, namely,
\begin{equation}
a(t; u,w)=\langle L(t)u, w\rangle, \qquad
u,w\in H^1_\sigma.
\label{ope} 
\end{equation}
Then \eqref{linear} is reduced to the Cauchy problem
\begin{equation}
u^\prime(t)+L(t)u(t)=0, \quad t\in (s,\infty); \qquad u(s)=f.
\label{cauchy}
\end{equation}
We start with the following theorem,
that provides a solution to \eqref{cauchy}.

In this paper the  constants 
$\delta_j$ ($j=1,2,\cdots$) describing     the smallness of $\|V\|$,
appear. All of them depend on
the space dimension $n$, for instance,
$\delta_1=\delta_1(n)$ in Theorem \ref{generate-1}. 
To simplify the notation we will not specify this dependence in each case.
\begin{theorem}
Let $n\geq 3$ and assume \eqref{basic}.
There is a constant $\delta_1>0$ such that if $\|V\|\leq \delta_1$,
then there exists a family $\{T(t,s)\}_{t\geq s\geq 0}\subset{\cal L}(L^2_\sigma)$
with the following properties:

\begin{enumerate}
\item
For every $s\geq 0$ and $f\in L^2_\sigma$ 
\begin{equation}
\begin{split}
&T(\cdot,s)f\in L^2(s,T; H^1_\sigma)\cap C([s,\infty); L^2_\sigma), \\
&\partial_tT(\cdot,s)f\in L^2(s,T; (H^1_\sigma)^*), \qquad \forall\, T\in (s,\infty)
\end{split}
\label{cl-1}
\end{equation}
together with
\begin{equation}
\begin{split}
&\partial_tT(t,s)f+L(t)T(t,s)f=0, \qquad 
\mbox{a.e. $t\in (s,\infty)$ in $(H^1_\sigma)^*$}, \\
&\lim_{t\to s}\|T(t,s)f-f\|_2=0
\end{split}
\label{forward}
\end{equation}
is the  unique solution to \eqref{cauchy} within the class \eqref{cl-1}.
Furthermore, we have
\begin{equation}
T(\cdot,s)f\in L^\infty(s,\infty; L^2_\sigma), \qquad
\nabla T(\cdot,s)f\in L^2(s,\infty; L^2)
\label{cl-2}
\end{equation}
with the energy inequality
\begin{equation}
\|T(t,s)f\|_2^2+\int_s^t\|\nabla T(\tau,s)f\|_2^2\,d\tau
\leq \|f\|_2^2
\label{energy}
\end{equation}
for all $t\geq s$.

\item
We have
\begin{equation}
T(t,\tau)T(\tau,s)=T(t,s), \quad 0\leq s\leq \tau\leq t
\label{semi}
\end{equation}
in ${\cal L}(L^2_\sigma)$.
\end{enumerate}
\label{generate-1}
\end{theorem}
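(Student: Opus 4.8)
The plan is to construct $T(t,s)$ by the classical Galerkin/Lions method applied to the Cauchy problem \eqref{cauchy}, exploiting that the bilinear form $a(t;\cdot,\cdot)$ is, by \eqref{conti} and \eqref{HS}, bounded on $H^1_\sigma\times H^1_\sigma$ and coercive once $\|V\|$ is small. Indeed, from \eqref{HS} we get $a(t;u,u)=\|\nabla u\|_2^2+\langle V(t)\cdot\nabla u,u\rangle-\langle V(t)\otimes u,\nabla u\rangle\geq (1-2c_0\|V\|)\|\nabla u\|_2^2$, so choosing $\delta_1$ with $2c_0\delta_1<1$ (say $2c_0\delta_1\leq 1/2$) makes $a(t;u,u)\geq\frac12\|\nabla u\|_2^2$ for all $t\geq 0$. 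Fixing $s\geq 0$ and $f\in L^2_\sigma$, I would take a countable basis of $H^1_\sigma$ built from $C^\infty_{0,\sigma}$, solve the finite-dimensional ODE systems $u_m'(t)+P_mL(t)u_m(t)=0$, $u_m(s)=P_mf$ (here $P_m$ the projection onto the span of the first $m$ basis elements), and note these are linear ODEs with $L^\infty$-in-time (indeed weak*-continuous) coefficients, hence uniquely solvable on $[s,\infty)$ with $u_m$ absolutely continuous. Testing with $u_m(t)$ and using coercivity gives, after integration, the a priori bound $\|u_m(t)\|_2^2+\int_s^t\|\nabla u_m\|_2^2\,d\tau\leq\|P_mf\|_2^2\leq\|f\|_2^2$, uniformly in $m$ and for all $t\geq s$; this already yields \eqref{cl-2} and, in the limit, the energy inequality \eqref{energy}.

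From these uniform bounds I extract (via a diagonal argument over exhausting intervals $(s,T)$) a subsequence with $u_m\rightharpoonup u$ weakly in $L^2(s,T;H^1_\sigma)$ and weakly* in $L^\infty(s,\infty;L^2_\sigma)$. To pass to the limit in the equation, I test against $\phi(t)\psi$ with $\psi$ a fixed basis element and $\phi\in C^1_c((s,T))$, use boundedness of $a(t;\cdot,\cdot)$ to handle $\int a(t;u_m,\psi)\phi\,dt\to\int a(t;u,\psi)\phi\,dt$ (here weak-in-$H^1$ convergence of $u_m$ suffices because $a(t;\cdot,\psi)$ is a fixed bounded linear functional, and dominated convergence in $t$ applies since $|a(t;u_m,\psi)|\leq C\|\nabla u_m\|_2\|\nabla\psi\|_2$ is dominated by an $L^2(s,T)$ function), and conclude $u'(t)+L(t)u(t)=0$ a.e. in $(H^1_\sigma)^*$; boundedness of $L(t)$ then forces $u'\in L^2(s,T;(H^1_\sigma)^*)$, giving \eqref{cl-1} except the continuity into $L^2_\sigma$. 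The latter follows from the Lions--Magenes interpolation lemma: $u\in L^2(s,T;H^1_\sigma)$ with $u'\in L^2(s,T;(H^1_\sigma)^*)$ implies $u\in C([s,T];L^2_\sigma)$ and the identity $\frac{d}{dt}\|u\|_2^2=2\langle u',u\rangle=-2a(t;u,u)\leq 0$ holds in the sense of distributions; this both re-derives \eqref{energy} for the limit and, combined with weak continuity and the already-established $\limsup_{t\to s}\|u(t)\|_2\leq\|f\|_2$, gives $\lim_{t\to s}\|u(t)-f\|_2=0$, i.e.\ the initial condition in \eqref{forward}. Uniqueness within the class \eqref{cl-1} is the standard energy argument: the difference $u$ of two solutions satisfies $\frac{d}{dt}\|u\|_2^2=-2a(t;u,u)\leq 0$ with $u(s)=0$, hence $u\equiv0$. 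Defining $T(t,s)f:=u(t)$ gives a well-defined (by uniqueness) linear map, bounded on $L^2_\sigma$ with norm $\leq 1$ by \eqref{energy}, proving part 1.

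For part 2, the cocycle identity \eqref{semi} is an immediate consequence of uniqueness: fix $0\leq s\leq\tau\leq t$ and $f\in L^2_\sigma$; the function $v(\cdot):=T(\cdot,s)f$ restricted to $[\tau,\infty)$ lies in the class \eqref{cl-1} (with initial time $\tau$), solves \eqref{cauchy} on $(\tau,\infty)$, and satisfies $v(\tau)=T(\tau,s)f$, so by the uniqueness statement applied with initial time $\tau$ and datum $T(\tau,s)f$ we get $v(t)=T(t,\tau)T(\tau,s)f$ for all $t\geq\tau$, which is exactly \eqref{semi}. The main obstacle, such as it is, is not any single hard estimate but the bookkeeping needed to make the limit passage rigorous on the half-line $[s,\infty)$ rather than a bounded interval (hence the diagonal extraction) and the careful invocation of the Lions--Magenes lemma to get the strong initial condition and the energy identity from merely $u\in L^2(s,T;H^1_\sigma)\cap\{u':u'\in L^2(s,T;(H^1_\sigma)^*)\}$; the coercivity, which is where the smallness $\|V\|\leq\delta_1$ enters, is otherwise what makes everything go through cleanly, and no spectral or regularity theory for $L(t)$ is required at this stage.
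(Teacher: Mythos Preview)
Your approach is essentially the same as the paper's: both rely on the Lions variational framework for nonautonomous parabolic problems in the triplet $H^1_\sigma\subset L^2_\sigma\subset (H^1_\sigma)^*$, with smallness of $\|V\|$ used only to secure coercivity of $a(t;\cdot,\cdot)$, after which existence, uniqueness, the energy inequality, and the cocycle identity all follow by standard arguments. The paper simply cites the Lions theorem (as in \cite{JL} or \cite[Theorem 5.5.1]{Ta}) as a black box, whereas you sketch its proof via Galerkin approximation and the Lions--Magenes interpolation lemma; this is a presentational difference, not a mathematical one.

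One small point worth noting: in your coercivity estimate you bound both perturbation terms by $c_0\|V\|\|\nabla u\|_2^2$, arriving at $a(t;u,u)\geq(1-2c_0\|V\|)\|\nabla u\|_2^2$. The paper observes that the transport term actually vanishes, $\langle V(t)\cdot\nabla u,u\rangle=0$, because $\mathrm{div}\,V=0$, so that $a(t;u,u)=\|\nabla u\|_2^2-\langle V(t)\otimes u,\nabla u\rangle\geq(1-c_0\|V\|)\|\nabla u\|_2^2$ and one may take $\delta_1=1/(2c_0)$ rather than your $1/(4c_0)$. This does not affect correctness, only the size of the admissible constant.
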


We next show that the evolution operator $\{T(t,s)\}_{t\geq s\geq 0}$ on
$L^2_\sigma$ extends to that on $L^q_\sigma$ with decay properties.
\begin{theorem}
Let $n\geq 3$ and assume $V$ satisfies  \eqref{basic}.
\begin{enumerate}
\item
Let $\delta_1$ be  the constant in Theorem \ref{generate-1},  $1<q<\infty$.
There is a constant $\delta_2=\delta_2(q)\in (0,\delta_1]$ such that
if $\|V\|\leq \delta_2$, then
the operator $T(t,s)\in {\cal L}(L^2_\sigma)$
in Theorem \ref{generate-1}
extends to a bounded operator
on $L^q_\sigma$ with the property \eqref{semi} in ${\cal L}(L^q_\sigma)$.
Given $r_0\in (q,\infty)$,
there is a constant 
$\delta_3=\delta_3(q,r_0)\in (0,\delta_2]$ such that if
$\|V\|\leq \delta_3$,
then the operator
$T(t,s)\in {\cal L}(L^q_\sigma)$
defined above satisfies
\begin{equation}
\|T(t,s)f\|_r\leq C(t-s)^{-\frac{n}{2}(\frac{1}{q}-\frac{1}{r})}\|f\|_q
\label{L^q-L^r}
\end{equation}
for all $t\in (s,\infty)$, $r\in [q,r_0)$ and $f\in L^q_\sigma$.

\item
Given $r_0\in (1,\infty)$,
there is a constant
$\delta_4=\delta_4(r_0)\in (0,\delta_1]$ such that if
$\|V\|\leq \delta_4$,
then the operator $T(t,s)\in {\cal L}(L^2_\sigma)$ with $s<t$
extends to a bounded operator
from $L^1_\sigma$ to $L^r_\sigma$ for every $r\in (1,r_0)$
with the following properties:
\begin{equation} 
\|T(t,s)f\|_r\leq C(t-s)^{-\frac{n}{2}(1-\frac{1}{r})}\|f\|_1
\label{L^1-L^r}
\end{equation}
for all $t\in (s,\infty)$, $r\in (1,r_0)$ and $f\in L^1_\sigma$;
\begin{equation}
T(t,\tau)T(\tau,s)f=T(t,s)f \quad\mbox{in $L^r_\sigma$},
\qquad 0\leq s< \tau\leq t
\label{semigroup}
\end{equation}
for $r\in (1,r_0)$ and $f\in L^1_\sigma$.
\end{enumerate}
\label{linear-main}
\end{theorem}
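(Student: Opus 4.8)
The plan is to build $T(t,s)$ on $L^q_\sigma$ by a perturbation argument around the heat semigroup, treating $V\cdot\nabla u + u\cdot\nabla V = P\,\mathrm{div}\,(u\otimes V + V\otimes u)$ via the integral equation
\begin{equation}
u(t) = e^{(t-s)\Delta}f - \int_s^t e^{(t-\tau)\Delta}P\,\mathrm{div}\,(u\otimes V + V\otimes u)(\tau)\,d\tau.
\label{IE-plan}
\end{equation}
The central obstruction, already flagged in the introduction, is that the naive kernel estimate for $e^{t\Delta}P\,\mathrm{div}$ against $L^{r,\infty}$ only gives $t^{-1}$ decay with a factor $\|V\|\,\|u\|_{r,\infty}$, so the time integral is logarithmically divergent at $\tau = t$; one cannot close a contraction in $L^r$ or $L^{r,\infty}$. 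Following Yamazaki's insight, I would instead work in the Lorentz scale $L^{r,1}$, where a real-interpolation refinement of the convolution estimate recovers an integrable singularity: schematically, one interpolates the two endpoint bounds
$\|e^{t\Delta}P\,\mathrm{div}\,F\|_{r_1,\infty}\le C t^{-\gamma_1}\|F\|_{\rho_1,\infty}$ and $\|e^{t\Delta}P\,\mathrm{div}\,F\|_{r_2,\infty}\le C t^{-\gamma_2}\|F\|_{\rho_2,\infty}$ with different exponents straddling $r$, and the $(\cdot,\cdot)_{\theta,1}$ real interpolation functor upgrades the borderline $t^{-1}$ to something like $t^{-1+\eta}$ uniformly, so that $\int_s^t (t-\tau)^{-1+\eta}(\tau-s)^{-1/2(\cdots)}\,d\tau$ converges. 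The smallness $\|V\|\le\delta_2$ is used exactly to make the resulting linear map on the function space $X := C((s,\infty); L^{r,1}_\sigma)$ (with appropriate time-weights built in) a contraction, yielding a unique solution depending linearly and boundedly on $f$; by duality $(L^{r,1}_\sigma)^* = L^{r',\infty}_\sigma$ one also gets the operator acting on the dual scale, which is what lets one reach $L^q_\sigma$ for all $q\in(1,\infty)$.

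For part (1), once the operator is constructed on $L^q_\sigma$ I would prove the $L^q$--$L^r$ decay \eqref{L^q-L^r} by a bootstrap on the integral equation \eqref{IE-plan}: assume inductively the estimate holds on $[q, r_1)$, insert it together with $\|V(\tau)\|_{n,\infty}\le\|V\|$ into the Duhamel term, and use the Lorentz-space convolution inequality plus the Beta-function identity $\int_s^t (t-\tau)^{-a}(\tau-s)^{-b}\,d\tau = C(t-s)^{1-a-b}$ (valid for $a,b<1$, $a+b>1$) to gain a larger exponent $r$; the condition $r<r_0$ and the $r_0$-dependence of $\delta_3$ enter because the induction must terminate before the convolution exponents degenerate. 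Consistency with \eqref{energy} at $q=r=2$ pins down the constants and shows the extension agrees with $T(t,s)$ from Theorem \ref{generate-1}; the two-parameter semigroup law \eqref{semi} in $\mathcal{L}(L^q_\sigma)$ follows from uniqueness in $X$ together with the corresponding identity already known on $L^2_\sigma$ and a density/approximation argument.

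Part (2) — the $L^1$--$L^r$ estimate — is the delicate endpoint, since $L^1$ is not a Lorentz interpolation space of the usable type. Here I would not try to build $T(t,s)$ on all of $L^1_\sigma$ directly; instead, starting from $f\in L^1_\sigma$, I would use $e^{(t-s)\Delta}f \in L^r$ with $\|e^{(t-s)\Delta}f\|_r \le C(t-s)^{-\frac n2(1-\frac1r)}\|f\|_1$ as the leading term and run the same Duhamel iteration \eqref{IE-plan} in a weighted space, now with the time-weight $(t-s)^{\frac n2(1-\frac1r)}$; the nonlinear-in-$u$ (really linear-in-$u$, bilinear-in-$(u,V)$) term is controlled by the already-established $L^\rho$--$L^r$ bounds from part (1) applied on the half-interval $[(s+t)/2, t]$ and the $L^1$--$L^\rho$ behavior of the heat kernel on $[s,(s+t)/2]$, with $\|V\|\le\delta_4$ absorbing the constant. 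The restriction $r\in(1,r_0)$ and the $r_0$-dependence of $\delta_4$ are again forced by the convergence of the Beta integrals. Finally \eqref{semigroup} comes from the $L^2_\sigma$ identity \eqref{semi}, approximating $f\in L^1_\sigma$ by $L^1_\sigma\cap L^2_\sigma$ elements, the just-proved bound \eqref{L^1-L^r}, and the uniqueness of the constructed solution. I expect the real-interpolation estimate for $e^{t\Delta}P\,\mathrm{div}$ in $L^{r,1}$ and its careful use in closing the iteration to be the main technical obstacle; everything downstream is bookkeeping with Beta functions and density.
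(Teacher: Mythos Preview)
Your overall strategy---integral equation, Yamazaki's Lorentz-space estimate, smallness of $\|V\|$ for a contraction, identification with the $L^2$-evolution operator, then density---is the right one and matches the paper. But your description of \emph{how} Yamazaki's estimate is used is off in a way that would break the argument. Real interpolation does \emph{not} upgrade the pointwise kernel rate from $t^{-1}$ to $t^{-1+\eta}$; pointwise one still has only $\|\nabla e^{t\Delta}\psi\|_{r,1}\le Ct^{-1}\|\psi\|_{q,1}$ when $1/q-1/r=1/n$. What Yamazaki actually proves is an $L^1$-in-time bound,
\[
\int_0^\infty \|\nabla e^{t\Delta}\psi\|_{r,1}\,dt \le C\|\psi\|_{q,1},
\]
obtained by interpolating the sublinear map $\psi\mapsto\big(t\mapsto\|\nabla e^{t\Delta}\psi\|_{r,1}\big)$ between two weak-$L^{p_j}$ time spaces. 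Consequently the contraction cannot be closed in your proposed space $X=C((s,\infty);L^{r,1}_\sigma)$ by pointwise-in-$\tau$ estimates; the Duhamel term would again produce a divergent $\int_s^t (t-\tau)^{-1}\,d\tau$.

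The paper's fix is to reverse the roles: the solution is sought in a time-weighted $L^{r_1,\infty}_\sigma$ space (not $L^{r,1}$), and one estimates $\langle v(t),\psi\rangle$ for a \emph{fixed} test function $\psi\in C^\infty_{0,\sigma}$. H\"older in Lorentz spaces gives $\|(u\otimes V)(\tau)\|_{p,\infty}\le C\|V\|\,(\tau-s)^{-\alpha}\|u\|_X$, and then Yamazaki's $L^1$-in-time estimate is applied to $\tau\mapsto\|\nabla e^{(t-\tau)\Delta}\psi\|_{p',1}$ near $\tau=t$, which is now integrable. Duality $(L^{r_1',1}_\sigma)^*=L^{r_1,\infty}_\sigma$ closes the fixed-point in $L^{r_1,\infty}_\sigma$; the strong $L^r$ estimates then follow by interpolation with a uniform $L^q$ bound obtained separately. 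Two further points you glossed over are also substantive: (i) the identification of this integral-equation solution with $T(t,s)f$ from Theorem~\ref{generate-1} is a genuine uniqueness argument (one shows $T(t,s)f$ itself satisfies the weak integral equation, then proves the difference vanishes in $L^\infty_t L^{2,\infty}_\sigma$ using Yamazaki again); (ii) reaching arbitrary $r_0$ in \eqref{L^q-L^r} requires a finite chain of intermediate exponents and the semigroup property, since each fixed-point step only gains $r$ up to $r_1(q)$ with $1/q-1/r_1<2/n$.
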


\begin{remark}
It is not clear whether $T(t,s)$ extends to a bounded operator on $L^1_\sigma$.
\label{bdd-L1}
\end{remark}

Let us proceed to the study of decay properties of the Navier-Stokes flow.
Given $u_0 \in L^2_\sigma$,
a vector field $u=u(x,t)$ is called a weak solution to \eqref{NS} if
\begin{equation}
u\in C_w([0,\infty); L^2_\sigma)\cap L^\infty_{loc}([0,\infty); L^2_\sigma),\qquad
\nabla u\in L^2_{loc}([0,\infty); L^2)
\label{class-NS} 
\end{equation}
and if
\begin{equation}
\langle u(t),\varphi(t)\rangle 
+\int_s^t \big[
a(\tau; u,\varphi)
+\langle u\cdot\nabla u, \varphi\rangle
\big] d\tau
=\langle u(s),\varphi(s)\rangle 
+\int_s^t \langle u,\partial_\tau\varphi\rangle  d\tau 
\label{weak-NS}  
\end{equation}
for all $t\geq s\geq 0$ and the test functions of class
\\
\begin{equation}
\begin{split}
&\varphi\in C([0,\infty); L^2_\sigma)\cap L^\infty_{loc}([0,\infty); L^{n,\infty}),\\
&\nabla\varphi\in L^2_{loc}([0,\infty); L^2), \qquad
\partial_\tau \varphi\in L^2_{loc}([0,\infty); (H^1_\sigma)^*),
\end{split}
\label{test} 
\end{equation}
where $a(\tau; u,\varphi)$ is the bilinear form given by \eqref{bilinear}.
Our class of test functions is larger than the standard one.
Note that every term in \eqref{weak-NS}
makes sense by \eqref{conti}, \eqref{class-NS} and \eqref{test}.

In this paper we study decay properties of weak solutions which satisfy
the strong energy inequality of the form
\begin{equation}
\begin{split} 
&\|u(t)\|_2^2+2\int_s^t\|\nabla u\|_2^2d\tau
\leq \|u(s)\|_2^2+2\int_s^t\langle V\otimes u, \nabla u\rangle d\tau \\ 
&\mbox{for a.e. $s\geq 0$, including $s=0$, and all $t\geq s$}.
\end{split} 
\label{SEI}  
\end{equation}

The main result on this issue reads as follows.
\begin{theorem}
Let $n=3,4$ and $u_0\in L^2_\sigma$.
Assume \eqref{basic}. Let $\delta_1$ be  the constant in Theorem \ref{generate-1}

\begin{enumerate}
\item
There is a constant $\delta_5\in (0,\delta_1]$ such that if 
$\|V\|\leq \delta_5$, then every weak solution $u(t)$ with \eqref{SEI} satisfies
\begin{equation}
\lim_{t\to\infty}\|u(t)\|_2=0, 
\label{stability} 
\end{equation}
\begin{equation}
\|u(t)-T(t,0)u_0\|_2=o\big(t^{-\frac{n}{4}+\frac{1}{2}}\big)  \qquad (t\to\infty),
\label{asym1}
\end{equation}
where $T(t,s)$ denotes the evolution operator  constructed in Theorem \ref{generate-1}.

\item
Let $\varepsilon >0$ be arbitrarily small.
There is a constant
$\delta_6=\delta_6(\varepsilon)\in (0,\delta_1]$ such that if
$\|V\|\leq \delta_6$ and if
\begin{equation}
\|T(t,0)u_0\|_2\leq C(1+t)^{-\alpha}
\label{top}
\end{equation}
for some $\alpha>0$, then
every weak solution $u(t)$ with \eqref{SEI} satisfies
\begin{equation} 
\|u(t)\|_2\leq C_\varepsilon (1+t)^{-\beta}, \qquad
\beta=\min\left\{\frac{n}{4}+\frac{1}{2}-\varepsilon, \,\alpha\right\}
\label{rate} 
\end{equation}
and, furthermore, enjoys
\begin{equation} 
\|u(t)-T(t,0)u_0\|_2
\leq
\left\{ 
\begin{array}{ll} 
C_\varepsilon (1+t)^{-\frac{n}{4}+\frac{1}{2}-2(1-\varepsilon)\alpha}, \qquad 
&0<\alpha <1/2, \\
C_\varepsilon (1+t)^{-\frac{n}{4}-\frac{1}{2}+\varepsilon}, &\alpha\geq 1/2.
\end{array} 
\right.
\label{asym2}
\end{equation}
\end{enumerate}
\label{en-decay}
\end{theorem}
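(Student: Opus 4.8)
The plan is to run a Fourier-splitting argument on the strong energy inequality \eqref{SEI}, using the linear decay hypothesis \eqref{top} and the $L^q$--$L^r$ smoothing of the evolution operator from Theorem \ref{linear-main} to control the nonlinear contribution. First I would justify the integral equation
\[
u(t)=T(t,0)u_0-\int_0^t T(t,\tau)P\,\mbox{div}\,(u\otimes u)(\tau)\,d\tau
\]
for weak solutions satisfying \eqref{SEI}; this is where the restriction $n=3,4$ enters, since for those dimensions $u\otimes u\in L^1_{loc}(0,\infty;L^{n/(n-1)})$ and one can pair against test functions built from $T(t,\tau)^*$, whose existence and backward-Stokes characterization were set up in Section \ref{evo}. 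From \eqref{SEI} with $s=0$ and the bound $|\langle V\otimes u,\nabla u\rangle|\le c_0\|V\|\|\nabla u\|_2^2$ coming from \eqref{HS}, absorbing the $\|V\|$-term into the dissipation (this needs $\|V\|\le\delta$ small, giving $\delta_5,\delta_6$) yields the reduced inequality
\[
\|u(t)\|_2^2+\int_0^t\|\nabla u\|_2^2\,d\tau\le \|u_0\|_2^2,
\]
and more usefully the shifted version from a.e.\ $s$. Then, following Schonbek, I would split $\|\nabla u\|_2^2\ge \int_{|\xi|\ge\rho(t)}|\xi|^2|\hat u|^2\,d\xi\ge \rho(t)^2(\|u\|_2^2-\int_{|\xi|\le\rho(t)}|\hat u|^2)$ with $\rho(t)^2=k/(1+t)$ for a large constant $k$, converting \eqref{SEI} into a differential inequality
\[
\frac{d}{dt}\!\left[(1+t)^{k}\|u(t)\|_2^2\right]\le k(1+t)^{k-1}\!\int_{|\xi|\le\rho(t)}|\hat u(\xi,t)|^2\,d\xi .
\]

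The core is then to bound the low-frequency part $\int_{|\xi|\le\rho(t)}|\hat u|^2\,d\xi$. Using the integral equation, $\hat u=\widehat{T(t,0)u_0}-\int_0^t\widehat{T(t,\tau)P\,\mbox{div}(u\otimes u)}\,d\tau$; the first term contributes $\le\|T(t,0)u_0\|_2^2\le C(1+t)^{-2\alpha}$ by \eqref{top}. For the Duhamel term I would estimate in $L^{r}$ for $r$ slightly above $1$ and interpolate: by \eqref{L^1-L^r} and the composite-operator bound for $T(t,\tau)P\,\mbox{div}$ in $L^q$, $q\in(1,\infty)$ (the operator whose analysis was flagged around \eqref{evo-div}), one gets
\[
\Big\|\int_0^t T(t,\tau)P\,\mbox{div}(u\otimes u)\,d\tau\Big\|_2
\le C\int_0^t (t-\tau)^{-\frac n4(1/q'-1/2)-1/2+\varepsilon'}\|u(\tau)\|_2^2\,d\tau,
\]
where the $\varepsilon'$ is exactly the loss Yamazaki's $L^{r,1}$-interpolation forces on us, and choosing $q$ near $1$ makes the time exponent near $-(n/4+1/2)+\varepsilon$; the $|\xi|\le\rho(t)$ truncation costs a further $\rho(t)^{n/2}\sim(1+t)^{-n/4}$. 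Feeding a bootstrap ansatz $\|u(t)\|_2\le C(1+t)^{-\gamma}$ into the convolution and optimizing yields $\gamma$ improving up to $\beta=\min\{n/4+1/2-\varepsilon,\alpha\}$, with the usual Gronwall-type iteration (first get $\|u\|_2\to 0$ for \eqref{stability}, then a small positive rate, then iterate). This proves \eqref{rate}; \eqref{asym1} is the borderline case $\alpha$ large with $\varepsilon$ absorbed, obtained the same way but keeping the estimate as $o(\cdot)$ via dominated convergence on the Duhamel integral once $\|u(\tau)\|_2$ is known to decay.

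For the difference estimate \eqref{asym2}, I would write $u(t)-T(t,0)u_0=-\int_0^t T(t,\tau)P\,\mbox{div}(u\otimes u)\,d\tau$ and estimate its $L^2$-norm directly, now inserting the already-established rate $\|u(\tau)\|_2\le C(1+t)^{-\beta}$: splitting the integral at $\tau=t/2$, the part on $[t/2,t]$ uses $\|u(\tau)\|_2^2\le C(1+t)^{-2\beta}$ with the integrable singularity $(t-\tau)^{-1/2-\varepsilon'}$ near $\tau=t$, and the part on $[0,t/2]$ uses the decay of $(t-\tau)^{-n/4-1/2+\varepsilon}$ against $\|u(\tau)\|_2^2\in L^1_{loc}$; for $\alpha<1/2$ the dominant contribution is $(1+t)^{-n/4+1/2-2(1-\varepsilon)\alpha}$ (the exponent $2(1-\varepsilon)\alpha$ reflecting $2\beta$ with $\beta\approx(1-\varepsilon)\cdot 2\alpha$... rather $\beta=\alpha$ here so $2\alpha$, with the $(1-\varepsilon)$ from the operator loss propagating through), while for $\alpha\ge1/2$ one saturates at the linear optimal rate $(1+t)^{-n/4-1/2+\varepsilon}$. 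The main obstacle I anticipate is precisely the $\varepsilon$-bookkeeping: because \eqref{div-L^1} is unavailable, every use of $T(t,\tau)P\,\mbox{div}$ in $L^q$ forces $q>1$ and hence a time-decay exponent strictly worse than the critical $-(n/4+1/2)$, and one must choose $q=q(\varepsilon)$, hence $\delta_6=\delta_6(\varepsilon)$, carefully so that the Fourier-splitting iteration still closes without the accumulated $\varepsilon$'s blowing up — this is the reason the theorem is only "almost" sharp.
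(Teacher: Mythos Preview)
Your plan for \eqref{stability} and \eqref{rate} is essentially the paper's: reduced strong energy inequality, Fourier splitting with $\rho(t)^2=m/(1+t)$, low-frequency control of the Duhamel term in $L^q$ with $q>1$ close to $1$ (Proposition \ref{evo-est}, Lemma \ref{fourier}), then bootstrap. One point you glide over: weak solutions satisfy \eqref{SEI} only in integral form for a.e.\ $s$, so ``converting \eqref{SEI} into a differential inequality'' is not automatic. The paper handles this by a Borchers--Miyakawa device (Lemma \ref{justifi}): one differentiates $h(s)=\int_s^t\frac{m}{1+\tau}\|u\|_2^2\,d\tau$ instead of $\|u(t)\|_2^2$ and obtains the weighted bound \eqref{weight} without ever writing $\frac{d}{dt}\|u\|_2^2$.

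The genuine gap is your treatment of \eqref{asym1} and \eqref{asym2}. You propose to bound $\big\|\int_0^t T(t,\tau)P\,\mbox{div}(u\otimes u)\,d\tau\big\|_2$ directly by a convolution against $\|u(\tau)\|_2^2$ with kernel ``$(t-\tau)^{-1/2-\varepsilon'}$''. These two requirements are incompatible: getting $\|u\otimes u\|_q\le C\|u\|_2^2$ forces $q=1$, the unavailable endpoint \eqref{div-L^1}; for any $q>1$ the $L^q\!\to\!L^2$ smoothing of $T(t,\tau)P\,\mbox{div}$ carries the singularity $(t-\tau)^{-\frac12-\frac n2(\frac1q-\frac12)}$, which for $q$ near $1$ is close to $(t-\tau)^{-\frac n4-\frac12}$ and \emph{not} integrable at $\tau=t$ for $n\ge3$. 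Taking $q$ closer to $2$ makes the singularity integrable but then $\|u\otimes u\|_q$ involves $\|\nabla u\|_2$, and your insert-the-decay-and-split argument no longer closes as written.

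The paper's route is to run Fourier splitting a second time, on $w=u-T(t,0)u_0$ itself. The new ingredient is a strong energy inequality for $w$ (Lemma \ref{en-w}): combining \eqref{SEI} with the exact energy equality for $v=T(t,0)u_0$ and taking $v$ as test function in \eqref{weak-NS} gives
\[
\|w(t)\|_2^2+\int_s^t\|\nabla w\|_2^2\,d\tau\le\|w(s)\|_2^2+2\int_s^t\langle u\cdot\nabla u,\,v\rangle\,d\tau
\]
for a.e.\ $s>0$. The cross term is controlled (Lemma \ref{last}) via $|\langle u\cdot\nabla u,v\rangle|\le\tfrac14\|\nabla w\|_2^2+C\,t^{-n/2+\theta}(1+t)^{-2\alpha}\|u\|_2^{2(1-\theta)}\|\nabla u\|_2^{2\theta}$, using the $L^2\!\to\!L^{n/\theta}$ decay of $T$. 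Then the Borchers--Miyakawa/Fourier-splitting argument (Lemma \ref{justifi-w}, Proposition \ref{est-nonlin}) produces $t^m\|w(t)\|_2^2\le C[J_1(t)+J_2(t)]$; the low-frequency ball volume supplies exactly the factor $\rho(t)^{n(1/q-1/2)}$ that your direct Duhamel estimate is missing. Inserting the already-proved rate \eqref{rate} into $J_1,J_2$ yields \eqref{asym2}, and the case $\alpha=0$ with dominated convergence on the time-averages gives \eqref{asym1}.
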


Estimate \eqref{asym2} tells us that
$u(t)$ is asymptotically equivalent to $T(t,0)u_0$ for $t\to\infty$
provided that $\|T(t,0)u_0\|_2\geq C(1+t)^{-\alpha}$ as well as
\eqref{top} holds with $\alpha<n/4+1/2$.

\begin{remark}
It is interesting to ask whether \eqref{rate} holds true  if
\eqref{top} is replaced by
\[
\|e^{t\Delta}u_0\|_2\leq C(1+t)^{-\alpha}.
\]
When $0<\alpha <1$, this condition implies \eqref{top},
see Proposition \ref{heat-decay}, and hence the answer is affirmative.
\label{from-heat}
\end{remark}

We combine \eqref{rate} with \eqref{L^q-L^r}, \eqref{L^1-L^r} and
\eqref{moment-est} below to yield the following corollary immediately.
\begin{corollary}
Let $n=3,4$ and assume \eqref{basic}.
Let $\delta_1$ be the constant in Theorem \ref{generate-1}.

\begin{enumerate}
\item
Let $1\leq q<2$.
There is a constant $\delta_7=\delta_7(q)\in (0,\delta_1]$ such that
if $\|V\|\leq \delta_7$, then
every weak solution $u(t)$ with \eqref{SEI} satisfies
\[ 
\|u(t)\|_2\leq C(1+t)^{-\frac{n}{2}(\frac{1}{q}-\frac{1}{2})} 
\]
provided $u_0\in L^q\cap L^2_\sigma$.

\item
Given $\varepsilon >0$ arbitrarily small,
there is a constant $\delta_8=\delta_8(\varepsilon)\in (0,\delta_1]$ 
such that if 
$\|V\|\leq \delta_8$, then
every weak solution $u(t)$ with \eqref{SEI} satisfies
\[ 
\|u(t)\|_2\leq C(1+t)^{-\frac{n}{4}-\frac{1}{2}+\varepsilon} 
\]
provided $u_0\in L^1\cap L^2_\sigma$ with
$\int |y||u_0(y)|dy<\infty$.
\end{enumerate}
\label{cor}
\end{corollary}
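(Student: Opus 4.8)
The plan is to derive both statements as consequences of Theorem \ref{en-decay}(2) once we verify that the hypothesis \eqref{top} holds with the appropriate $\alpha$. Everything hinges on establishing a decay rate for the linearized flow $T(t,0)u_0$ in $L^2$; for this I would use the $L^q$-$L^r$ estimate \eqref{L^q-L^r} (respectively \eqref{L^1-L^r}) with $r=2$, combined, in part (2), with the moment estimate \eqref{moment-est} referred to in the text.

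\emph{Part (1).} Fix $q\in[1,2)$ and take $u_0\in L^q\cap L^2_\sigma$. Set $\alpha:=\tfrac{n}{2}(\tfrac1q-\tfrac12)$; note $0<\alpha\le n/4$ (strictly less than $n/4+1/2$), so that the minimum in \eqref{rate} is governed by $\alpha$ as soon as $\varepsilon$ is chosen small enough, say $\varepsilon<n/4+1/2-\alpha$. For $q>1$ we apply \eqref{L^q-L^r} with $r=2$, $f=u_0$, $s=0$, valid once $\|V\|\le\delta_3(q,2)$ and $2<r_0$; this yields $\|T(t,0)u_0\|_2\le Ct^{-\alpha}\|u_0\|_q$ for $t>0$. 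For $q=1$ we instead use \eqref{L^1-L^r} with $r=2$ (permissible since $n=3,4$ so $2<r_0$ is available), giving the same bound with $\alpha=n/4$. Combining the small-time bound $\|T(t,0)u_0\|_2\le\|u_0\|_2$ from \eqref{energy} with the large-time bound just obtained produces \eqref{top} with this $\alpha$. Now apply Theorem \ref{en-decay}(2) with $\delta_7(q):=\min\{\delta_3(q,2),\delta_6(\varepsilon)\}$ (or $\delta_4$ in place of $\delta_3$ when $q=1$), and since $\beta=\alpha$ for this choice of $\varepsilon$, \eqref{rate} reads exactly $\|u(t)\|_2\le C(1+t)^{-\frac{n}{2}(\frac1q-\frac12)}$, which is the claim.

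\emph{Part (2).} Take $u_0\in L^1\cap L^2_\sigma$ with $\int|y||u_0(y)|\,dy<\infty$. The point is that the first moment condition upgrades the decay of the linearized flow from $t^{-n/4}$ to $t^{-n/4-1/2}$. Concretely, one writes $u_0$ relative to its mean and exploits the vanishing (or near-vanishing) of the zeroth moment against the kernel of $T(t,0)P$, absorbing the extra factor $(t-s)^{-1/2}$; this is the content of the moment estimate \eqref{moment-est} quoted before the corollary. Thus $\|T(t,0)u_0\|_2\le C(1+t)^{-n/4-1/2}$, i.e.\ \eqref{top} holds with $\alpha=n/4+1/2$. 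Given $\varepsilon>0$, set $\delta_8(\varepsilon):=\min\{\delta_6(\varepsilon),\delta_4(2)\}$ and apply Theorem \ref{en-decay}(2); since $\alpha=n/4+1/2>n/4+1/2-\varepsilon$, the minimum in \eqref{rate} equals $n/4+1/2-\varepsilon$, yielding $\|u(t)\|_2\le C(1+t)^{-n/4-1/2+\varepsilon}$ as asserted.

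The main obstacle is not in the corollary itself—given the three theorems it is essentially bookkeeping of the smallness constants—but in correctly invoking \eqref{moment-est} for part (2): one must be sure that the first-moment decay $t^{-n/4-1/2}$ for $T(t,0)u_0$ genuinely follows from the linear theory of Section \ref{decay-evo} and not merely from properties of the heat semigroup, since the perturbation $V\cdot\nabla\,\cdot+(\cdot)\cdot\nabla V$ could in principle destroy the cancellation of the zeroth moment. Once \eqref{moment-est} is available in the form stated, the remaining steps are routine applications of Theorem \ref{en-decay}(2).
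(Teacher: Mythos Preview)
Your approach is exactly the paper's: verify \eqref{top} via the linear estimates \eqref{L^q-L^r}, \eqref{L^1-L^r}, \eqref{moment-est} and then invoke Theorem~\ref{en-decay}(2). One small correction in Part~(2): the moment estimate \eqref{moment-est} carries an $\varepsilon$-loss, giving only $\|T(t,0)u_0\|_2\le C_{\varepsilon'}(1+t)^{-n/4-1/2+\varepsilon'}$, so \eqref{top} holds with $\alpha=n/4+1/2-\varepsilon'$ rather than $\alpha=n/4+1/2$; since the final conclusion already absorbs an $\varepsilon$, taking $\varepsilon'=\varepsilon$ (and the corresponding smallness constant $\delta_{16}$) still yields the stated rate.
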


\section{Evolution operator}
\label{evo}

This section is devoted to the study of solutions to  \eqref{cauchy}
and  the backward perturbed Stokes system 
\eqref{backward} below, specifycally, the adjoint 
system of \eqref{cauchy}.
We thus introduce the adjoint form
$a^*(t; u,w):=a(t; w,u)$
and the associated linear operator
$M(t): H^1_\sigma\to (H^1_\sigma)^*$ by
\begin{equation}
a^*(t; u,w)=\langle w, M(t)u\rangle, \qquad 
u, w\in H^1_\sigma.
\label{ope2}
\end{equation}
Since
\begin{equation}
\langle L(t)u, w\rangle=a(t; u,w)
=a^*(t; w,u)=\langle u, M(t)w\rangle
\label{dual-0}
\end{equation}
for $u, w\in H^1_\sigma$,
we see that $L(t)\subset M(t)^*$ and $M(t)\subset L(t)^*$,
which together with the domains 
$D(L(t))=D(M(t))=H^1_\sigma$ implies
\begin{equation}
L(t)=M(t)^*, \qquad M(t)=L(t)^*.
\label{duality}
\end{equation}

In view of \eqref{ope} and \eqref{ope2}, 
we find from integration by parts
that $L(t)$ and $M(t)$ respectively take the form
\begin{equation}
\begin{split}
&L(t)u=P[-\Delta u+V(t)\cdot\nabla u+\mbox{div $(V(t)\otimes u)$}], \\
&M(t)u=P\big[-\Delta u-V(t)\cdot\nabla u-\sum_{j=1}^n V_j(t)\nabla u_j\big].
\end{split}
\label{concrete}
\end{equation}
From $u\in H^1_\sigma$ and $V(t)\in L^{n,\infty}$
it follows that all the terms
\[ 
\Delta u, \quad
V(t)\cdot\nabla u, \quad
\mbox{div $(V(t)\otimes u)$}, \quad
\sum_{j=1}^n V_j(t)\nabla u_j
\]
belong to
$H^{-1}=(H^1)^*$.
Since $P$ is bounded on $H^1$ and, therefore, on $H^{-1}$,
both representations given by \eqref{concrete} make sense in 
$P(H^{-1})=(H^1_\sigma)^*$.

To show Theorem \ref{generate-1}, we use the unique existence 
theorem due to J.L. Lions \cite[Chap.3, Theorem 1.2]{JL},
which was revisited by Tanabe in his own way,
see \cite[Theorem 5.5.1]{Ta}.
This theorem asks the bilinear form $a(t; u,w)$ to be
merely measurable in $t$ for each $u, w\in H^1_\sigma$.
\bigskip

\noindent
{\it Proof of Theorem \ref{generate-1}}.
In view of \eqref{HS} we take
$\delta_1=\frac{1}{2c_0}$ to find that if
$\|V\|\leq\delta_1$, then
\begin{equation}
|\langle V(t)\otimes u, \nabla u\rangle|
\leq \frac{1}{2}\|\nabla u\|_2^2 \qquad (u\in H^1_\sigma), 
\label{control}
\end{equation}
so that
\begin{equation}
a(t; u,u)
=\|\nabla u\|_2^2-\langle V(t)\otimes u, \nabla u\rangle
\geq \frac{1}{2}\|\nabla u\|_2^2 \qquad (u\in H^1_\sigma).
\label{coercive} 
\end{equation}
Here, we have used
\[
\langle V(t)\cdot\nabla u, u\rangle  
=\frac{1}{2}\int_{\mathbb R^n}\mbox{div $(V(t)|u|^2)$}dx  
=\lim_{R\to\infty}\int_{|x|=R}\frac{x\cdot V(t)}{R}|u|^2 d\sigma=0 
\]
for $u\in H^1_\sigma$.
Hence the Lions theorem mentioned above 
provides a unique solution $u(t)$ of \eqref{cauchy}
for every $s\geq 0$ and $f\in L^2_\sigma$.
We define $T(t,s): L^2_\sigma \to L^2_\sigma$ by
\begin{equation}
T(t,s)f:=u(t), \qquad t\in [s,\infty).
\label{constr1}
\end{equation}
Then it satisfies \eqref{cl-1}, and by uniqueness of solutions
we have the semigroup property \eqref{semi}.
Since $\|u(t)\|_2^2$ is absolutely continuous, we find
from 
$\mbox{\eqref{forward}}_1$, \eqref{ope} and \eqref{coercive} that
\begin{equation}
\begin{split}
\frac{d}{dt}\|u(t)\|_2^2
&=2\langle u^\prime(t), u(t)\rangle
=-2\langle L(t)u(t), u(t)\rangle \\
&=-2\|\nabla u(t)\|_2^2+2\langle V(t)\otimes u(t), \nabla u(t)\rangle \\
&\leq -\|\nabla u(t)\|_2^2
\end{split}
\label{diff-en}
\end{equation}
for a.e. $t\in (s,\infty)$.
We thus obtain \eqref{cl-2} together with the energy inequality \eqref{energy}.
As a consequence,
the family $\{T(t,s)\}_{t\geq s\geq 0}\subset {\cal L}(L^2_\sigma)$
constitutes an evolution operator in $L^2_\sigma$ with
all the properties  required in Theorem \ref{generate-1}.
\hfill
$\Box$
\bigskip

We next consider the backward perturbed Stokes equation
subject to the final condition at $t>0$:
\begin{equation}
-v^\prime(s)+M(s)v(s)=0, \quad s\in [0,t); \qquad v(t)=g.
\label{backward}
\end{equation}

The following proposition provides a solution.
\begin{proposition}
Let $n \geq 3$ and assume \eqref{basic}.
Under the same condition $\|V\|\leq \delta_1$ as in Theorem \ref{generate-1},
there is a family $\{S(t,s)\}_{t\geq s\geq 0}\subset {\cal L}(L^2_\sigma)$
with the following properties:

\begin{enumerate}
\item
For every $t>0$ and $g\in L^2_\sigma$ we have
\begin{equation}
S(t,\cdot)g\in L^2(0,t; H^1_\sigma)\cap C([0,t]; L^2_\sigma), \qquad
\partial_s S(t,\cdot)g\in L^2(0,t; (H^1_\sigma)^*),
\label{cl-adj}
\end{equation}
together with
\begin{equation}
\begin{split}
&-\partial_s S(t,s)g+M(s)S(t,s)g=0, \qquad
\mbox{a.e. $s\in [0,t)$ in $(H^1_\sigma)^*$}, \\
&\lim_{s\to t}\|S(t,s)g-g\|_2=0.
\end{split}
\label{backward-2}
\end{equation}

\item
$\|S(t,s)\|_{{\cal L}(L^2_\sigma)}\leq 1$
\end{enumerate}
\label{generate-2}
\end{proposition}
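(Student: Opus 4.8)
The plan is to establish Proposition \ref{generate-2} in complete parallel to the proof of Theorem \ref{generate-1}, now applied to the backward Cauchy problem \eqref{backward}. First I would reverse time: set $s=t-\tau$ for $\tau\in(0,t]$ and let $\tilde v(\tau):=v(t-\tau)$, so that \eqref{backward} becomes a forward Cauchy problem $\tilde v'(\tau)+M(t-\tau)\tilde v(\tau)=0$ on $(0,t)$ with $\tilde v(0)=g$. The time-reversed family of bilinear forms $\tilde a(\tau;u,w):=a^*(t-\tau;u,w)=a(t-\tau;w,u)$ is still measurable in $\tau$ for each fixed $u,w\in H^1_\sigma$ (since $V\in L^\infty(0,\infty;L^{n,\infty})$ with weak*-continuity), bounded by \eqref{conti}, and —crucially— coercive: because $\tilde a(\tau;u,u)=a(t-\tau;u,u)$, the very same computation \eqref{control}–\eqref{coercive} gives $\tilde a(\tau;u,u)\geq\frac12\|\nabla u\|_2^2$ under the identical smallness condition $\|V\|\leq\delta_1=\frac1{2c_0}$. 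Hence the J.\,L.~Lions theorem \cite[Chap.3, Theorem 1.2]{JL} (equivalently \cite[Theorem 5.5.1]{Ta}) again applies verbatim and yields, for each $t>0$ and $g\in L^2_\sigma$, a unique solution $\tilde v$ in the class $L^2(0,t;H^1_\sigma)\cap C([0,t];L^2_\sigma)$ with $\tilde v'\in L^2(0,t;(H^1_\sigma)^*)$.

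Next I would define $S(t,s)g:=\tilde v(t-s)=v(s)$ for $s\in[0,t]$ and translate the regularity and the equation back to the variable $s$; this gives precisely \eqref{cl-adj} and \eqref{backward-2}, where the initial/final condition $\lim_{s\to t}\|S(t,s)g-g\|_2=0$ is the image of $\lim_{\tau\to0}\|\tilde v(\tau)-g\|_2=0$. For the norm bound in item (2), one repeats the energy computation: since $\|v(s)\|_2^2$ is absolutely continuous, for a.e. $s\in[0,t)$,
\[
-\frac{d}{ds}\|v(s)\|_2^2=-2\langle -v'(s),v(s)\rangle=-2\langle M(s)v(s),v(s)\rangle=-2a^*(s;v(s),v(s))=-2a(s;v(s),v(s))\leq -\|\nabla v(s)\|_2^2\leq 0,
\]
using \eqref{ope2}, \eqref{dual-0} and the coercivity \eqref{coercive}. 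Therefore $s\mapsto\|v(s)\|_2^2$ is nondecreasing on $[0,t]$, so $\|S(t,s)g\|_2=\|v(s)\|_2\leq\|v(t)\|_2=\|g\|_2$, which is exactly $\|S(t,s)\|_{{\cal L}(L^2_\sigma)}\leq 1$. (In passing, this also gives the analogue of \eqref{energy}: $\|v(s)\|_2^2+\int_s^t\|\nabla v\|_2^2\,d\sigma\leq\|g\|_2^2$.)

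The only genuinely delicate point—and the one I would flag as the main obstacle—is verifying that all hypotheses of the Lions existence theorem survive the time reversal without any new smallness or continuity requirement on $V$. Boundedness of $\tilde a$ is immediate from \eqref{conti}, and measurability in $\tau$ is preserved under the affine change of variable $\tau\mapsto t-\tau$; the essential observation is that coercivity of $\tilde a(\tau;u,u)$ reduces to coercivity of $a(t-\tau;u,u)$, because the antisymmetric convective term $\langle V(t-\tau)\cdot\nabla u,u\rangle$ vanishes on $H^1_\sigma$ exactly as in the display following \eqref{coercive}, so only the sign-definite part $\|\nabla u\|_2^2-\langle V(t-\tau)\otimes u,\nabla u\rangle$ remains and is controlled by \eqref{control}. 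Once this is in place, everything else is a routine transcription of the forward argument, and no constant beyond $\delta_1$ is needed. The semigroup-type relation and the identification $S(t,s)=T(t,s)^*$ are deliberately not claimed here and are left to the subsequent development of the section.
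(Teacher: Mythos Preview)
Your proposal is correct and follows essentially the same route as the paper: reverse time to turn \eqref{backward} into a forward Cauchy problem, verify that the time-reversed bilinear form satisfies the same boundedness, measurability and coercivity as $a(t;\cdot,\cdot)$ under the identical smallness $\|V\|\le\delta_1$, apply the J.\,L.~Lions theorem, and define $S(t,s)g$ by evaluating the auxiliary solution at $t-s$; the paper's proof is exactly this, with the auxiliary function called $w$ and the energy inequality \eqref{ener-auxi} replacing your direct computation of $\frac{d}{ds}\|v(s)\|_2^2$. One cosmetic slip: in your displayed energy identity the middle term should read $-2\langle v'(s),v(s)\rangle$ rather than $-2\langle -v'(s),v(s)\rangle$ (since $v'=M(s)v$), after which the chain $-2\langle M(s)v,v\rangle=-2a(s;v,v)\le -\|\nabla v\|_2^2\le 0$ is correct and yields the stated monotonicity.
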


\begin{proof}
Fix $t>0$.
Since the family $\{M(t-\tau)\}_{\tau\in [0,t]}$ possesses the same
property as that of $\{L(\tau)\}_{\tau\geq 0}$, so that one can apply
the existence theorem due to J.L. Lions
as in Theorem \ref{generate-1}; thus, for every $g\in L^2_\sigma$ 
there exists a unique solution $w$ of class
\[
w\in L^2(0,t; H^1_\sigma)\cap C([0,t]; L^2_\sigma), \qquad
w^\prime \in L^2(0,t; (H^1_\sigma)^*)
\]
to the auxiliary equation
\begin{equation}
w^\prime(\tau)+M(t-\tau)w(\tau)=0, \qquad
\mbox{a.e. $\tau\in (0,t]$ in $(H^1_\sigma)^*$},
\label{auxi}
\end{equation}
subject to the initial condition $w(0)=g$.
Similarly to the solution obtained in Theorem \ref{generate-1},
we have
\[
w\in L^\infty(0,t; L^2_\sigma), \qquad
\nabla w\in L^2(0,t; L^2)
\]
with the energy inequality
\begin{equation}
\|w(\tau)\|_2^2+\int_0^\tau\|\nabla w\|_2^2 d\sigma\leq \|g\|_2^2
\label{ener-auxi}
\end{equation}
for every $\tau\in (0,t]$.
Define $S(t,s): L^2_\sigma\to L^2_\sigma$ by
\begin{equation}
S(t,s)g:=w(t-s),\qquad s\in [0,t],
\label{constr}
\end{equation}
which provides the desired solution to \eqref{backward}
since
\[
\partial_s S(t,s)g=-w^\prime(t-s)=M(s)w(t-s)=M(s)S(t,s)g.
\]
This completes the proof of the proposition.
\end{proof}

The following duality relation holds.
\begin{proposition}
Let $n\geq 3$ and assume \eqref{basic}.
Under the same condition
$\|V\|\leq \delta_1$ as in Theorem \ref{generate-1}, we have
\[
T(t,s)=S(t,s)^*, \qquad S(t,s)=T(t,s)^*, \qquad 0\leq s\leq t,
\]
in ${\cal L}(L^2_\sigma)$.
\label{duality2}
\end{proposition}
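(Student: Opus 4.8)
The plan is to prove the duality relation $T(t,s)=S(t,s)^*$ by testing the forward equation against the backward solution and reading off a constant in $s$. Fix $t>0$ and fix $f,g\in L^2_\sigma$. Put $u(s):=T(s,0)f$ for $s\in[0,t]$ (more precisely $u(\tau)=T(\tau,s_0)f$ on $[s_0,t]$ when the first argument is the running variable; here I take $s_0=0$ without loss, since the general case follows by restarting at $s=s_0$) and $v(s):=S(t,s)g$ for $s\in[0,t]$. Both $u$ and $v$ lie in $L^2(0,t;H^1_\sigma)\cap C([0,t];L^2_\sigma)$ with $u',\,(-v')\in L^2(0,t;(H^1_\sigma)^*)$ by \eqref{cl-1} and \eqref{cl-adj}. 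Consider the scalar function $\phi(s):=\langle u(s),v(s)\rangle$ on $[0,t]$. The regularity just listed is exactly what is needed for the product rule in the Lions–Gelfand triple $H^1_\sigma\subset L^2_\sigma\subset(H^1_\sigma)^*$: $\phi$ is absolutely continuous on $[0,t]$ and
\[
\phi'(s)=\langle u'(s),v(s)\rangle+\langle u(s),v'(s)\rangle
\qquad\text{for a.e. }s\in(0,t).
\]

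Next I would substitute the two evolution equations. By $\eqref{forward}_1$ we have $u'(s)=-L(s)u(s)$, and by $\eqref{backward-2}_1$ we have $v'(s)=M(s)v(s)$, both valid for a.e. $s$ in $(H^1_\sigma)^*$. Hence for a.e. $s$,
\[
\phi'(s)=-\langle L(s)u(s),v(s)\rangle+\langle u(s),M(s)v(s)\rangle
=-a(s;u(s),v(s))+a^*(s;v(s),u(s))=0,
\]
using the definitions \eqref{ope}, \eqref{ope2} and the identity $a^*(s;v,u)=a(s;u,v)$ from the line $a^*(t;u,w):=a(t;w,u)$. (Equivalently, this is just \eqref{dual-0}.) Therefore $\phi$ is constant on $[0,t]$, so $\phi(t)=\phi(0)$, i.e.
\[
\langle u(t),v(t)\rangle=\langle u(0),v(0)\rangle,
\qquad\text{that is}\qquad
\langle T(t,0)f,\,g\rangle=\langle f,\,S(t,0)g\rangle,
\]
since $u(t)=T(t,0)f$, $v(t)=S(t,t)g=g$ by $\eqref{backward-2}_2$, $u(0)=f$ and $v(0)=S(t,0)g$. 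Repeating the argument on $[s_0,t]$ for arbitrary $s_0\in[0,t]$ gives $\langle T(t,s_0)f,g\rangle=\langle f,S(t,s_0)g\rangle$ for all $f,g\in L^2_\sigma$, which is precisely $T(t,s)^*=S(t,s)$; the other relation $S(t,s)^*=T(t,s)$ follows by taking adjoints, both operators being bounded on the Hilbert space $L^2_\sigma$ by Theorem \ref{generate-1} and Proposition \ref{generate-2}.

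The one point that requires care — and what I would flag as the main obstacle — is the justification of the product rule for $\phi(s)=\langle u(s),v(s)\rangle$, i.e.\ the integration-by-parts formula in the Gelfand triple. The standard result (see Lions \cite{JL} or Tanabe \cite{Ta}) states that if $h\in L^2(0,t;H^1_\sigma)$ with $h'\in L^2(0,t;(H^1_\sigma)^*)$ then $h\in C([0,t];L^2_\sigma)$ and $s\mapsto\|h(s)\|_2^2$ is absolutely continuous with $\frac{d}{ds}\|h(s)\|_2^2=2\langle h'(s),h(s)\rangle$; the bilinear (polarization) version $\frac{d}{ds}\langle h_1(s),h_2(s)\rangle=\langle h_1'(s),h_2(s)\rangle+\langle h_1(s),h_2'(s)\rangle$ for two such functions follows by applying the quadratic version to $h_1\pm h_2$ and to $h_1$, $h_2$ separately. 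One must check that $-v'\in L^2(0,t;(H^1_\sigma)^*)$ (given in \eqref{cl-adj}) indeed lets $v$ play the role of $h_2$ with the correct sign bookkeeping — the minus sign in the backward equation \eqref{backward-2} is exactly what makes the two contributions to $\phi'$ cancel rather than add. No smallness of $\|V\|$ beyond $\|V\|\le\delta_1$ is needed, since that is already what guarantees the existence of both families via the Lions theorem. Everything else is routine once this formula is in place.
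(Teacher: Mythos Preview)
Your proof is correct and follows essentially the same route as the paper: both show that the pairing $\tau\mapsto\langle T(\tau,s)f,\,S(t,\tau)g\rangle$ is absolutely continuous with vanishing derivative (via the product rule in the Gelfand triple and the duality \eqref{dual-0}), then evaluate at the endpoints. The only cosmetic difference is that the paper integrates from $s+\varepsilon$ to $t-\varepsilon$ and passes to the limit using the continuity in \eqref{cl-1}, \eqref{cl-adj}, whereas you invoke absolute continuity on the closed interval directly; both are justified by the same result (cf.\ \cite[Lemma~5.5.1]{Ta}).
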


\begin{proof}
Let $t>s\geq 0$.
By \eqref{cl-1} and \eqref{cl-adj} we find that 
$(s,t)\ni \tau\mapsto \langle T(\tau,s)f, S(t,\tau)g\rangle$
is absolutely continuous for all $f, g\in L^2_\sigma$, 
see \cite[Lemma 5.5.1]{Ta}.
By  \eqref{forward}, \eqref{backward-2} and \eqref{duality} it follows that
that
\begin{equation*}
\begin{split}
&\quad \partial_\tau\langle T(\tau,s)f, S(t,\tau)g\rangle \\
&=\langle \partial_\tau T(\tau,s)f, S(t,\tau)g\rangle
+\langle T(\tau,s)f, \partial_\tau S(t,\tau)g\rangle \\
&=\langle -L(\tau)T(\tau,s)f, S(t,\tau)g\rangle
+\langle T(\tau,s)f, M(\tau)S(t,\tau)g\rangle=0
\end{split}
\end{equation*}
which, integrating from $s+\varepsilon$ to $t-\varepsilon$, yields
\[
\langle T(t-\varepsilon,s)f, S(t,t-\varepsilon)g \rangle
=\langle T(s+\varepsilon,s)f, S(t,s+\varepsilon)g\rangle
\]
for any $\varepsilon >0$.
Since 
$T(\cdot,s)f\in C([s,\infty); L^2_\sigma)$ and
$S(t,\cdot)\in C([0,t]; L^2_\sigma)$,
taking the limit $\varepsilon \to 0$ leads us to
\[
\langle T(t,s)f, g\rangle=\langle f, S(t,s)g\rangle
\]
for all $f, g\in L^2_\sigma$, which concludes the assertion.
\end{proof}

For later use
we give further regularity of $S(t,s)g$ when $g\in H^1_\sigma$.
\begin{proposition}
Let  $\delta_1$ be  the constant in Theorem \ref{generate-1},  
$n\geq 3$ and assume \eqref{basic}. 
Suppose
$\|V\|\leq\delta_9:=\delta_1/2$.
For every $t>0$ and $g\in H^1_\sigma$
the solution $v(s)=S(t,s)g$ to \eqref{backward} obtained in 
Proposition \ref{generate-2} then enjoys
\begin{equation}
v\in L^\infty(0,t; H^1_\sigma)\cap L^2(0,t; H^2), \qquad
v^\prime\in L^2(0,t; L^2_\sigma)
\label{class-adj2}
\end{equation}
and the equation $\mbox{\eqref{backward-2}}_1$ holds in $L^2_\sigma$.
\label{reg}
\end{proposition}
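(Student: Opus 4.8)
The plan is to transfer everything to the forward auxiliary problem \eqref{auxi}. Recall from the proof of Proposition \ref{generate-2} that $S(t,s)g=w(t-s)$ (see \eqref{constr}), where $w$ is the weak solution of \eqref{auxi} with $w(0)=g$, and that, by \eqref{concrete}, one may split $M(t-\tau)=A+B(t-\tau)$ with $A:=-P\Delta$ the Stokes operator (so that $A=-\Delta$ on $L^2_\sigma$ and $D(A)=H^2\cap L^2_\sigma$) and $B(s)u:=-P[V(s)\cdot\nabla u+\sum_{j=1}^nV_j(s)\nabla u_j]$, which takes values in $L^2_\sigma$ because $P$ does. It therefore suffices to improve the regularity of $w$ to $w\in L^\infty(0,t;H^1_\sigma)\cap L^2(0,t;H^2)$ and $w'\in L^2(0,t;L^2_\sigma)$ and to check that \eqref{auxi} then holds in $L^2_\sigma$; passing back through $v(s)=w(t-s)$ gives \eqref{class-adj2} and $\mbox{\eqref{backward-2}}_1$ in $L^2_\sigma$.

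The core of the argument is an $H^1$ energy estimate, obtained --- at least formally --- by testing $w'+(A+B(t-\tau))w=0$ against $Aw$. Using $\langle w',Aw\rangle=\frac{1}{2}\frac{d}{d\tau}\|\nabla w\|_2^2$ and $\|Aw\|_2=\|\Delta w\|_2$ (since $\Delta w$ is already solenoidal), this reads
\[
\frac{1}{2}\frac{d}{d\tau}\|\nabla w(\tau)\|_2^2+\|\Delta w(\tau)\|_2^2=-\langle B(t-\tau)w(\tau),Aw(\tau)\rangle .
\]
For the right-hand side I would run the generalized H\"older and Sobolev inequalities exactly as in \eqref{HS} but one derivative higher: from $|B(s)w|\le c\,|V(s)|\,|\nabla w|$ pointwise, the embedding $\dot H^1\hookrightarrow L^{2_*,2}$ (with $1/2_*=1/2-1/n$) applied componentwise to $\nabla w$, and $\|\nabla^2w\|_2=\|\Delta w\|_2$ on $\mathbb R^n$, one gets $\|B(s)w\|_2\le 2c_0\|V(s)\|_{n,\infty}\|\Delta w\|_2$ with the same $c_0$ as in \eqref{HS}. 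Hence $|\langle B(t-\tau)w,Aw\rangle|\le 2c_0\|V\|\,\|\Delta w\|_2^2\le\frac{1}{2}\|\Delta w\|_2^2$ as soon as $\|V\|\le\delta_9=\delta_1/2=1/(4c_0)$, and the estimate absorbs to give $\frac{d}{d\tau}\|\nabla w\|_2^2+\|\Delta w\|_2^2\le 0$, so that $\|\nabla w(\tau)\|_2^2+\int_0^\tau\|\Delta w\|_2^2\,d\sigma\le\|\nabla g\|_2^2$. Together with \eqref{ener-auxi} this yields $w\in L^2(0,t;H^2)\cap L^\infty(0,t;H^1_\sigma)$ (continuity into $H^1_\sigma$ following from the standard interpolation $L^2(0,t;H^2)\cap H^1(0,t;L^2)\hookrightarrow C([0,t];H^1)$), and finally $w'=-Aw-B(t-\tau)w\in L^2(0,t;L^2_\sigma)$ since $\|w'\|_2\le(1+2c_0\|V\|)\|\Delta w\|_2$; all three terms of \eqref{auxi} now lying in $L^2_\sigma$, the equation holds there.

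The delicate point --- and the one I expect to be the main obstacle --- is to justify the energy identity above, since testing against $Aw$ presupposes precisely the regularity $w\in L^2(0,t;D(A))$, $w'\in L^2(0,t;L^2_\sigma)$ that is to be proved, and on $\mathbb R^n$ there is no eigenbasis of the Stokes operator to run a Galerkin scheme in the usual way. I would therefore carry out the estimate on a regularized problem --- e.g.\ replacing $A$ by its Yosida approximants $A(I+\lambda A)^{-1}$, or mollifying the data and $V$, each regularized problem being solvable in the Lions framework of Theorem \ref{generate-1} and satisfying the same bound uniformly in the parameter --- and then pass to the limit, identifying the limit with $w$ by the uniqueness of weak solutions. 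Alternatively one may argue through the integral equation $w(\tau)=e^{-\tau A}g-\int_0^\tau e^{-(\tau-\sigma)A}B(t-\sigma)w(\sigma)\,d\sigma$: since $g\in H^1_\sigma=D(A^{1/2})$ one has $e^{-\tau A}g\in L^2(0,t;D(A))\cap C([0,t];D(A^{1/2}))$ with $L^2$ time-derivative, so maximal $L^2$-regularity of the heat semigroup together with $\|B(t-\sigma)w\|_2\le 2c_0\|V\|\,\|Aw\|_2$ makes the solution map a contraction in $L^2(0,t;D(A))$ once $\|V\|$ is small, placing $w$ in the required class. In either route it is only the smallness $\|V\|\le\delta_1/2$ --- which makes $B(t-\tau)w$ subordinate to $Aw$ --- that enters in an essential, quantitative way.
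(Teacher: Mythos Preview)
Your proposal is correct and matches the paper's argument in its essential content: the same $H^1$ energy identity obtained by pairing the equation with $Aw$, the same lower-order bound $\|B(s)w\|_2\le 2c_0\|V(s)\|_{n,\infty}\|\Delta w\|_2$ via \eqref{HS} with one more derivative and $\|\nabla^2w\|_2=\|\Delta w\|_2$, and the same absorption under $\|V\|\le\delta_1/2=1/(4c_0)$, yielding \eqref{appro1} and then \eqref{appro2}.

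The only genuine difference is in the justification of the formal identity. The paper simply invokes the Galerkin approximation (citing Temam and \cite{KPS}) and asserts that the approximants $w_k$ satisfy \eqref{appro1}--\eqref{appro2} and \eqref{ener-auxi} uniformly in $k$. You instead flag the absence of an $L^2_\sigma$-eigenbasis of $A$ on $\mathbb R^n$ and propose Yosida regularization or, alternatively, a fixed-point argument in $L^2(0,t;D(A))$ via maximal $L^2$-regularity of the heat semigroup. Your objection is fair: the ``special basis'' device of Temam, which is what makes testing against $Aw_k$ legitimate, relies on Stokes eigenfunctions and does not transfer verbatim to the whole space. The paper's Galerkin can still be made rigorous --- e.g.\ by Fourier spectral cutoffs $\chi_{\{|\xi|\le k\}}$, which commute with $A$ and map into $\bigcap_m D(A^m)$, or by exhausting $\mathbb R^n$ with balls --- but this is not spelled out. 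Your maximal-regularity route is arguably the cleanest on $\mathbb R^n$: the maximal-regularity constant for $A=-\Delta$ is $1$ by Plancherel, so the contraction in $L^2(0,t;D(A))$ closes under the same smallness $2c_0\|V\|<1$, and uniqueness in the Lions class identifies the fixed point with the weak solution $w$. Either approach delivers the same conclusion with the same threshold $\delta_9$.
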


\begin{proof}
In view of the construction \eqref{constr} of $S(t,\cdot)$,
it suffices to verify
\[
w\in L^\infty(0,t; H^1_\sigma)\cap L^2(0,t; H^2), \qquad 
w^\prime\in L^2(0,t; L^2_\sigma)
\]
for the solution to \eqref{auxi} with $w(0)=g$.
When the basic flow $V$ is the trivial state, one can find the result 
in the textbook by Temam \cite[Chap III, Proposition 1.2]{Tem}.
In our case as well we adopt the Galerkin approximation,
see \cite[Section 4]{KPS} for the Navier-Stokes system \eqref{NS},
to go back to the proof of the Lions theorem.

Let us consider formally the energy relation for solutions to \eqref{auxi}, 
supposing enough regularity.
Denote by  $A$  the Stokes operator:
$A=-P\Delta=-\Delta$ with domain $D(A)=H^2\cap L^2_\sigma$.
By \eqref{concrete} we have
\begin{equation*}
\begin{split}
&\quad \frac{1}{2}\,\frac{d}{d\tau}\|\nabla w(\tau)\|_2^2
=\langle w^\prime(\tau), Aw(\tau)\rangle \\
&=-\|Aw(\tau)\|_2^2
+\Big\langle V(t-\tau)\cdot\nabla w(\tau)+\sum_{j=1}^n V_j(t-\tau)\nabla w_j(\tau),
Aw(\tau)\Big\rangle \\
&\leq -\|Aw(\tau)\|_2^2
+2c_0\|V(t-\tau)\|_{n,\infty}\|\nabla^2w(\tau)\|_2\|Aw(\tau)\|_2,
\end{split}
\end{equation*}
where the constant $c_0$ is as in \eqref{HS}.
Since, in the whole space, $\|\nabla^2w\|_2= \|Aw\|_2$,
we have
\begin{equation}
\|\nabla w(\tau)\|_2^2
+\int_0^\tau\|Aw\|_2^2\,d\sigma
\leq \|\nabla g\|_2^2
\label{appro1}
\end{equation}
provided $\|V\|\leq \delta_9=1/(4c_0)=\delta_1/2$.
By  \eqref{auxi} and the estimate of the lower order terms as above
we have
$\|w^\prime(\tau)\|_2\leq C\|Aw(\tau)\|_2$, which combined with
\eqref{appro1} implies

\begin{equation}
\int_0^\tau \|w^\prime\|_2^2\, d\sigma\leq C\|\nabla g\|_2^2.
\label{appro2}
\end{equation}

We now consider  $\{w_k\}$ the Galerkin approximation
which  satisfies \eqref{appro1}, \eqref{appro2} and \eqref{ener-auxi}
for every $\tau\in (0,t]$.
Using $g\in H^1_\sigma$, we find that $\{w_k\}$ is bounded in
$L^\infty(0,t; H^1_\sigma)$ and that
both $\{w_k^\prime\}$ and $\{Aw_k\}$ are bounded in
$L^2(0,t; L^2_\sigma)$.
This concludes the assertion.
\end{proof}
\begin{remark}
It is not clear whether the same assertion as in Proposition \ref{reg}
holds for $T(t,s)$
because the generator $L(t)$ involves $\nabla V(t)$, see \eqref{concrete},
on which we have made no assumption.
We will need Proposition \ref{reg} only for $S(t,s)$
when we take it as a test function of the Navier-Stokes system.
\label{reg-1}
\end{remark}
\begin{remark}
The only thing which we know about the regularity of $S(t,s)$
seems to be Proposition \ref{reg} and this causes the restriction $n\leq 4$
in Proposition \ref{int-rep}.
This is because we have no information 
about the derivatives of $V$.
If suitable assumption on those were imposed,
higher energy estimates would be possible and imply that
$v\in L^\infty(0,t; H^k)$
provided $g\in L^2_\sigma\cap H^k$.
Once we have that,
we can remove the restriction $n\leq 4$ in Proposition \ref{int-rep}
since what we need is 
$v\in L^\infty(0,t; L^n)$
for $g\in C_{0,\sigma}^\infty$,
see the class \eqref{test} of test functions.
\label{reg-2}
\end{remark}

\section{Decay properties of the linearized flow}
\label{decay-evo}

In this section we
consider some decay properties of the evolution operator $T(t,s)$
obtained in Theorem \ref{generate-1}.
Since it seems to be difficult to derive them directly,
we will rewrite \eqref{cauchy} in  the integral form
\begin{equation}
u(t)=e^{(t-s)\Delta}f
-\int_s^t e^{(t-\tau)\Delta}P\,\mbox{div $(u\otimes V+V\otimes u)$}(\tau)\,d\tau
\label{IE}
\end{equation}
and construct a decaying solution to \eqref{IE} by use of its weak form
\begin{equation}
\langle u(t),\psi\rangle
=\langle e^{(t-s)\Delta}f,\psi\rangle
+\int_s^t \langle (u\otimes V+V\otimes u)(\tau), 
\nabla e^{(t-\tau)\Delta}\psi\rangle\,d\tau,
\qquad \psi\in C_{0,\sigma}^\infty.
\label{w-IE}
\end{equation}
This argument is totally independent of the construction of the 
evolution operator $T(t,s)$ 
in the previous section and it works even for $n=2$.
We will show that both solutions coincide.
There might be another way that we could define the evolution operator
as the unique solution of \eqref{IE} itself,
however, in this case the duality relation given by Proposition \ref{duality2}
is not clear.
We do need Proposition \ref{duality2} to get the description
of weak solutions to the Navier-Stokes system in terms of
the evolution operator.

In order to analyze \eqref{IE}, one needs several estimates of the heat semigroup
$e^{t\Delta}$, which are summarized in the following lemma.
Among them, the remarkable estimate \eqref{yamazaki} below was 
established by Yamazaki \cite{Y} and plays a crucial role.
Note that 
$\|\nabla e^{t\Delta}f\|_{r,1}\leq Ct^{-1}\|f\|_{q,1}$
under the condition \eqref{pair}.
\begin{lemma}
Let $n\geq 2$.

\begin{enumerate}
\item
Let $1< q\leq r<\infty$ and $j=0,1$.
Then
\begin{equation}
\|\nabla^j e^{t\Delta}f\|_{r,1}
\leq Ct^{-\frac{n}{2}(\frac{1}{q}-\frac{1}{r})-\frac{j}{2}}\|f\|_{q,1}
\label{standard}
\end{equation}
for all $t>0$ and $f\in L^{q,1}$.

\item
Let $1\leq q\leq r\leq\infty$.
Then the composite operator $e^{t\Delta}P\mbox{\emph{div}}$
extends to a bounded operator from $L^q$ to $L^r$ with
\begin{equation} 
\|e^{t\Delta}P\mbox{\emph{div} $F$}\|_r
\leq Ct^{-\frac{n}{2}(\frac{1}{q}-\frac{1}{r})-\frac{1}{2}}\|F\|_q
\label{div-est}
\end{equation}
for all $t>0$ and $F\in L^q$.

\item
Let $1<q\leq r<\infty$.
Then the composite operator $e^{t\Delta}P\mbox{\emph{div}}$
extends to a bounded operator from $L^{q,\infty}$ to $L^{r,\infty}$ with
\begin{equation}
\|e^{t\Delta}P\mbox{\emph{div} $F$}\|_{r,\infty}
\leq Ct^{-\frac{n}{2}(\frac{1}{q}-\frac{1}{r})-\frac{1}{2}}\|F\|_{q,\infty}
\label{div-vari}
\end{equation}
for all $t>0$ and $F\in L^{q,\infty}$.
If in particular $1<q<r<\infty$, then
\begin{equation} 
\|e^{t\Delta}P\mbox{\emph{div} $F$}\|_r
\leq Ct^{-\frac{n}{2}(\frac{1}{q}-\frac{1}{r})-\frac{1}{2}}\|F\|_{q,\infty},
\label{div-vari2}
\end{equation}
\begin{equation}
\|e^{(t+h)\Delta}P\mbox{\emph{div} $F$}
-e^{t\Delta}P\mbox{\emph{div} $F$}\|_r
\leq C t^{-\frac{n}{2}(\frac{1}{q}-\frac{1}{r})-\frac{1}{2}-\theta}
h^\theta\|F\|_{q,\infty}
\label{div-diff}
\end{equation}
for all $t>0$, $h>0$ and $F\in L^{q,\infty}$,
where $0<\theta <1$.

\item
(\cite{Mi1})
Let $1\leq r\leq \infty$.
Then
\begin{equation}
\|e^{t\Delta}f\|_r
\leq Ct^{-\frac{n}{2}(1-\frac{1}{r})}(1+t)^{-\frac{1}{2}}
\int_{\mathbb R^n}(1+|y|)|f(y)|\,dy
\label{moment}
\end{equation}
for all $t>0$ and $f\in L^1_\sigma$ with 
$\int |y||f(y)|dy<\infty$.

\item
(\cite{Y})
Suppose
\begin{equation}
1<q<r<\infty, \qquad
\frac{1}{q}-\frac{1}{r}=\frac{1}{n}.
\label{pair}
\end{equation}
Then
\begin{equation}
\int_0^\infty
\|\nabla e^{t\Delta}f\|_{r,1}\, dt\leq C\|f\|_{q,1} 
\label{yamazaki}
\end{equation}
for all $f\in L^{q,1}$.
\end{enumerate}
\label{heat}
\end{lemma}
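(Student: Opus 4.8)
\textbf{Proof proposal for Lemma \ref{heat}.}

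The plan is to treat the six items as a menu of essentially standard heat-semigroup estimates, upgraded to the Lorentz scale by real interpolation, together with two imported results (items (4) and (6)) for which I would merely cite \cite{Mi1} and \cite{Y}. For item (1), I would start from the classical $L^q$-$L^r$ bound $\|\nabla^j e^{t\Delta}f\|_r \leq C t^{-\frac{n}{2}(\frac1q-\frac1r)-\frac j2}\|f\|_q$ for $1\leq q\leq r\leq\infty$, which follows from Young's inequality applied to the Gauss kernel and its gradient. Fixing the target exponent $r$ and the order $j$, the map $f\mapsto \nabla^j e^{t\Delta}f$ is bounded $L^{q_0}\to L^{r}$ and $L^{q_1}\to L^{r}$ for $q_0<q<q_1$ in a small neighbourhood of $q$, with operator norms $C t^{-\frac n2(\frac1{q_i}-\frac1r)-\frac j2}$; real interpolation with the second index $1$ between these endpoints gives $L^{q,1}\to L^{r,1}$ (since $(L^{r},L^{r})_{\theta,1}=L^{r,1}$ by the reiteration/interpolation identities recalled in Section \ref{result}), and the interpolated time factor is exactly $t^{-\frac n2(\frac1q-\frac1r)-\frac j2}$ because the exponent is affine in $1/q$. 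One must take $q>1$ so that both endpoints stay strictly above $1$, and $r<\infty$ for the same reason at the upper end; this matches the stated hypotheses.

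For items (2) and (3) I would invoke boundedness of the Helmholtz projection $P$ on $L^q$ ($1<q<\infty$) and on $L^{q,\infty}$, and write $e^{t\Delta}P\,\mathrm{div}\,F = (\nabla e^{t\Delta})\cdot(P F)$ up to a harmless commutation of $P$ with $e^{t\Delta}$ and the constant-coefficient $\mathrm{div}$, so that \eqref{div-est} reduces to the $j=1$ case of the classical gradient estimate applied to $PF$; the endpoint cases $q=1$ and $r=\infty$ in \eqref{div-est} survive because the kernel of $\nabla e^{t\Delta}$ is in $L^1$ and the argument there does not need $P$ bounded on $L^1$ (one uses instead that $\mathrm{div}$ of the kernel is integrable, i.e. the standard Calder\'on–Zygmund-free estimate for $e^{t\Delta}P\mathrm{div}$). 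For the Lorentz version \eqref{div-vari}, I would again interpolate: fix $r$, vary $q$ in $(1,\infty)$ near the given value, get $L^{q_i,\infty}\to L^{r,\infty}$? — here one has to be slightly careful since $L^{q,\infty}$ is not an interpolation space with second index $1$. The clean route is to note that $e^{t\Delta}P\mathrm{div}\colon L^{q_i}\to L^{r}$ boundedly and then apply real interpolation with second index $\infty$ between two such statements at exponents straddling $q$ and $r$ jointly (keeping $\frac1q-\frac1r$ fixed is \emph{not} required here, only $q_0<q<q_1$, $r_0<r<r_1$ with matching $\theta$), yielding $(L^{q_0},L^{q_1})_{\theta,\infty}\to(L^{r_0},L^{r_1})_{\theta,\infty}$, i.e. $L^{q,\infty}\to L^{r,\infty}$, with the affine time exponent. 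The strengthened conclusion \eqref{div-vari2} with target $L^r$ (not just $L^{r,\infty}$) when $q<r$ strictly uses the embedding $L^{r_1,\infty}\hookrightarrow L^{r}$ together with picking $q<r_1<$ (something) and re-summing, or more directly: split $F$ into its level-set pieces and sum the $L^{q_0}\to L^r$ and $L^{q_1}\to L^r$ bounds — the strict inequality $q<r$ is exactly what makes that dyadic sum converge. The H\"older-in-time difference estimate \eqref{div-diff} follows by writing $e^{(t+h)\Delta}-e^{t\Delta}=\int_t^{t+h}\Delta e^{\tau\Delta}\,d\tau$, estimating the integrand by \eqref{div-vari2} with one extra derivative (cost $\tau^{-1/2}$ inside), and then interpolating the resulting $h^1$ bound (valid for $h\lesssim t$) against the trivial $h^0$ bound to get $h^\theta$ for $0<\theta<1$; for $h\gtrsim t$ one just uses \eqref{div-vari2} twice and absorbs.

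Items (4) and (6) I would not reprove: (4) is Miyakawa's weighted-$L^1$ estimate, obtained by writing $e^{t\Delta}f(x)=\int G_t(x-y)f(y)\,dy$ and using $|G_t(x-y)-G_t(x)|\lesssim |y|\sup|\nabla G_t|$ for the $(1+t)^{-1/2}$ gain from the cancellation $\int f=0$ (which holds since $f\in L^1_\sigma$), and I would cite \cite{Mi1}; (6) is precisely Yamazaki's real-interpolation lemma, whose point is that although $\|\nabla e^{t\Delta}f\|_{r,1}\leq Ct^{-1}\|f\|_{q,1}$ is not integrable at either endpoint, interpolating the two \emph{integrated} endpoint bounds $\int_0^\infty\|\nabla e^{t\Delta}f\|_{r_i,1}\,dt \lesssim \|f\|_{q_i,1}$ (each of which \emph{is} finite because away from the scaling line the time exponent is $\neq -1$) recovers \eqref{yamazaki} on the scaling line $\frac1q-\frac1r=\frac1n$; I would cite \cite{Y} and merely remark on this mechanism. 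The only genuine obstacle, and the step I would spend the most care on, is getting the Lorentz endpoints in (1) and especially (3) stated with the exact hypotheses claimed — in particular making sure that the $q=1$ and $r=\infty$ cases of \eqref{div-est} do not silently require $P$ bounded on $L^1$ or $L^\infty$ (they don't, because the relevant object is the single kernel $\nabla e^{t\Delta}\ast$ composed with the \emph{bounded} $C^\infty$ off-diagonal part of $P$, plus the diagonal which commutes), and that the strengthening from $L^{r,\infty}$ to $L^r$ in \eqref{div-vari2}, \eqref{div-vari} genuinely needs the strict inequality $q<r$; everything else is Young's inequality, interpolation bookkeeping, and citation.
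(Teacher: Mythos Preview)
Your overall plan matches the paper's, and items (1), (3) and (4) are handled essentially as the paper does (real interpolation from the classical $L^q$--$L^r$ bounds, and the $h^\theta$ estimate via $e^{(t+h)\Delta}-e^{t\Delta}=\int_t^{t+h}\Delta e^{s\Delta}\,ds$). Two points need correction.

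For item (2) at the endpoints $q=1$ and $r=\infty$, writing $e^{t\Delta}P\,\mathrm{div}\,F=(\nabla e^{t\Delta})\cdot(PF)$ and appealing to a ``bounded $C^\infty$ off-diagonal part of $P$'' does not work: the Riesz transforms $R_jR_k$ are not bounded on $L^1$ or $L^\infty$, and there is no such decomposition of $P$ that rescues this. The paper (following Miyakawa \cite{Mi2}) instead computes the kernel of the \emph{composite} operator directly,
\[
(e^{t\Delta}P\,\mathrm{div}\,F)_j=\sum_{k,l}\Gamma_{jkl}(\cdot,t)\ast F_{kl},\qquad
\Gamma_{jkl}(x,t)=\partial_lG(x,t)\,\delta_{jk}+\int_t^\infty\partial_j\partial_k\partial_lG(x,s)\,ds,
\]
and verifies $\|\Gamma_{jkl}(t)\|_1\leq Ct^{-1/2}$; Young's inequality then gives \eqref{div-est} for all $1\leq q\leq r\leq\infty$ at once. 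This kernel identity is the substantive content of item (2) and should not be waved past.

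For item (5) your described mechanism is wrong: the ``integrated endpoint bounds'' $\int_0^\infty\|\nabla e^{t\Delta}f\|_{r_i,1}\,dt$ are \emph{never} finite, since $\int_0^\infty t^{-\alpha}\,dt$ diverges for every $\alpha$. Yamazaki's actual argument (reproduced in the paper) fixes $r$, views the sublinear map $f\mapsto\bigl[t\mapsto\|\nabla e^{t\Delta}f\|_{r,1}\bigr]$ as bounded from $L^{q_j,1}(\mathbb{R}^n)$ into the \emph{weak} space $L^{p_j,\infty}(\mathbb{R}_+)$ with $1/p_j=\tfrac{n}{2}(\tfrac{1}{q_j}-\tfrac{1}{r})+\tfrac12$, and then interpolates $(\cdot,\cdot)_{\theta,1}$ to land in $L^{1}(\mathbb{R}_+)$ exactly on the scaling line $\tfrac{1}{q}-\tfrac{1}{r}=\tfrac{1}{n}$. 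Since you cite \cite{Y} anyway this does not break your proof, but the explanatory remark you offer is the wrong one. (Minor: in item (1), $(L^r,L^r)_{\theta,1}=L^r$, not $L^{r,1}$; to reach $L^{r,1}$ you must interpolate with varying target exponents $r_0<r<r_1$ as well.)
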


\begin{proof}
Although all estimates are well known, we will give the proof 
for completeness.
Since
\[
e^{t\Delta}f=G(\cdot,t)*f, \qquad
G(x,t)=(4\pi t)^{-n/2}e^{-\frac{|x|^2}{4t}},
\]
the $L^q$-$L^r$ estimate ($1\leq q\leq r\leq\infty$)
\begin{equation}
\|\nabla^j e^{t\Delta}f\|_r
\leq Ct^{-\frac{n}{2}(\frac{1}{q}-\frac{1}{r})-\frac{j}{2}}\|f\|_q
\label{usual}
\end{equation}
follows from the Young inequality.
Inequality \eqref{standard}  follows by real interpolation.

To show \eqref{div-est}, it suffices to prove
\begin{equation}
\|e^{t\Delta}P\mbox{div $F$}\|_q
\leq Ct^{-1/2}\|F\|_q
\label{div-est2}
\end{equation}
for $q\in [1,\infty]$.
As was done by Miyakawa \cite[Section 2]{Mi2},
we have
\[
(e^{t\Delta}P\mbox{div $F$})_j=\sum_{k,l=1}^n\Gamma_{jkl}(\cdot,t)*F_{kl} \qquad
(1\leq j\leq n)
\]
with the kernel function of the form
\[
\Gamma_{jkl}(x,t)
=\partial_l G(x,t)\delta_{jk}
+\int_t^\infty \partial_j\partial_k\partial_l G(x,s)\,ds 
\]
where $\partial_l=\frac{\partial}{\partial x_l}$.
We then find
\[
\|\Gamma_{jkl}(t)\|_1
\leq Ct^{-1/2}+C\int_t^\infty s^{-3/2}\,ds=Ct^{-1/2}
\]
which yields \eqref{div-est2}.
Both \eqref{div-vari} and \eqref{div-vari2} follow from interpolation.
Since
\begin{equation*}
\begin{split}
e^{(t+h)\Delta}P\mbox{div $F$}-e^{t\Delta}P\mbox{div $F$}
&=\int_t^{t+h}\frac{d}{ds}\,e^{s\Delta} P\mbox{div $F$}\,ds \\
&=\int_t^{t+h}\Delta e^{s\Delta/2}e^{s\Delta/2}P\mbox{div $F$}\,ds,
\end{split}
\end{equation*}
estimate \eqref{div-vari2} yields
\[\CI=
\|e^{(t+h)\Delta}P\mbox{div $F$}
-e^{t\Delta}P\mbox{div $F$}\|_r
\leq Ct^{-\frac{n}{2}(\frac{1}{q}-\frac{1}{r})-\frac{3}{2}}\,h \,\|F\|_{q,\infty}.
\]
It is obvious that 
\[ \CI \leq
 Ct^{-\frac{n}{2}(\frac{1}{q}-\frac{1}{r})-\frac{1}{2}}\|F\|_{q,\infty}.
\]
Combining the two last inequalities for $\CI$   gives \eqref{div-diff}.

Estimate \eqref{moment} was shown by Miyakawa \cite[Lemma 3.3]{Mi1}.
A key observation is that $f\in L^1$ together with $\mbox{div $f$}=0$
implies $\int f(y)\,dy=0$.
Then we get
\begin{equation*}
\begin{split}
\|e^{t\Delta}f\|_1
&\leq \int_{\mathbb R^n}\int_{\mathbb R^n}
|G(x-y,t)-G(x,t)||f(y)|\,dy\, dx \\
&\leq Ct^{-1/2}\int_{\mathbb R^n}|y||f(y)|\, dy
\end{split}
\end{equation*}
which yields \eqref{moment}.

Following Yamazaki \cite{Y}, we give the proof of \eqref{yamazaki}.
We fix the pair of $q$ and $r$ satisfying \eqref{pair},
and take $q_j$ ($j=0,1$) so that
\[
1<q_0<q<q_1<r, \qquad
\frac{1}{q}=\frac{1-\theta}{q_0}+\frac{\theta}{q_1}.
\]
Then the sublinear operator
\[
f\mapsto \big[t\mapsto \|\nabla e^{t\Delta}f\|_{r,1}\big]
\]
is bounded from $L^{q_j,1}(\mathbb R^n)$ to
$L^{p_j,\infty}(\mathbb R_+)$ on account of \eqref{standard},
where
$\frac{1}{p_j}=\frac{n}{2}(\frac{1}{q_j}-\frac{1}{r})+\frac{1}{2}$.
Applying the real interpolation $(\cdot,\cdot)_{\theta,1}$
leads us to \eqref{yamazaki}.
The proof is complete.
\end{proof}

We now construct  a solution to the integral equation \eqref{IE}, that satisfies 
the  following $L^q$-$L^r$ decay estimates.
\begin{proposition}
Let $n\geq 2$ and assume \eqref{basic}. Then

\begin{enumerate}
\item
Let $1\leq q<\infty$.
There exist $r_1=r_1(q)>q$ and $\delta_{10}=\delta_{10}(q)>0$
such that if
$\|V\|\leq \delta_{10}$,
then the equation \eqref{IE} admits a unique solution $u(t)$ 
in the class
\[
X:=\{(\cdot\,-s)^\alpha u\in L^\infty(s,\infty; L^{r_1,\infty}_\sigma)\},\quad
\alpha:=\frac{n}{2}\left(\frac{1}{q}-\frac{1}{r_1}\right).
\]
The solution $u(t)$ belongs to 
$C((s,\infty); L^r_\sigma)$
and satisfies the estimate
\begin{equation}
\|u(t)\|_r\leq C(t-s)^{-\frac{n}{2}(\frac{1}{q}-\frac{1}{r})}\|f\|_q
\label{decay-1}
\end{equation}
for all $t\in (s,\infty)$, $r\in [q,r_1)$ and $f\in L^q_\sigma$,
where the case $r=q$ is excluded when $q=1$.

\item
Let $q_1, q_2\in [1,\infty)$.
There is a constant
\[
\delta_{11}=\delta_{11}(q_1,q_2)
\in (0, \min\{\delta_{10}(q_1),\delta_{10}(q_2)\}]
\]
such that if $\|V\|\leq \delta_{11}$, then
$u_1(t)=u_2(t)$
for every $f\in L^{q_1}_\sigma\cap L^{q_2}_\sigma$,
where $u_j(t)$ denotes the corresponding solution to \eqref{IE} obtained above
for $q=q_j$ ($j=1,2$).
\end{enumerate}
\label{asym-1}
\end{proposition}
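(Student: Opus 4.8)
The plan is to prove part (1) by a contraction argument for the integral equation \eqref{IE} in the time-weighted space $X$, then to read off the pointwise $L^r$-bounds and the continuity by feeding the fixed point back into the equation. First I would fix $r_1>q$ so that $\alpha:=\frac n2(\frac1q-\frac1{r_1})<1$ and so that $\rho$, defined by $\frac1\rho=\frac1{r_1}+\frac1n$, satisfies $1<\rho$; one checks such an $r_1$ exists for every $q\in[1,\infty)$ when $n\geq2$. On $X$, with norm $\|u\|_X=\sup_{t>s}(t-s)^\alpha\|u(t)\|_{r_1,\infty}$, I would define $\Phi(u)=e^{(\cdot-s)\Delta}f+B(u)$ where $B(u)(t)=-\int_s^te^{(t-\tau)\Delta}P\,\mathrm{div}\,(u\otimes V+V\otimes u)(\tau)\,d\tau$, interpreted through the weak form \eqref{w-IE} together with the duality $L^{r_1,\infty}_\sigma=(L^{r_1',1}_\sigma)^*$; this makes $\Phi(u)(t)$ a solenoidal element of $L^{r_1,\infty}_\sigma$. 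The linear term is immediate from the heat estimate \eqref{usual}: $\|e^{(\cdot-s)\Delta}f\|_X\leq C\|f\|_q$.

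The hard part is the bilinear term $B$ (which is linear in $u$). Testing $B(u)(t)$ against $\psi\in C_{0,\sigma}^\infty$ and using the generalized H\"older inequality in Lorentz spaces gives
\[
|\langle B(u)(t),\psi\rangle|\leq\int_s^t\|(u\otimes V+V\otimes u)(\tau)\|_{\rho,\infty}\,\|\nabla e^{(t-\tau)\Delta}\psi\|_{\rho',1}\,d\tau,
\]
with $\|(u\otimes V+V\otimes u)(\tau)\|_{\rho,\infty}\leq C\|V\|\,(\tau-s)^{-\alpha}\|u\|_X$ by Lorentz-H\"older. The naive pointwise bound $\|\nabla e^{(t-\tau)\Delta}\psi\|_{\rho',1}\leq C(t-\tau)^{-1}\|\psi\|_{r_1',1}$ — which is \eqref{standard}, and uses $\frac1{r_1'}-\frac1{\rho'}=\frac1n$ — carries a non-integrable factor, so following Yamazaki I would split $\int_s^t=\int_s^{(s+t)/2}+\int_{(s+t)/2}^t$. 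On the second half I bound $(\tau-s)^{-\alpha}\leq C(t-s)^{-\alpha}$ and invoke \eqref{yamazaki}, legitimate precisely because $\frac1{r_1'}-\frac1{\rho'}=\frac1n$, to get $\int_0^\infty\|\nabla e^{\sigma\Delta}\psi\|_{\rho',1}\,d\sigma\leq C\|\psi\|_{r_1',1}$; on the first half I bound $(t-\tau)^{-1}\leq C(t-s)^{-1}$ and integrate $(\tau-s)^{-\alpha}$, where $\alpha<1$ is used. Both contributions are $\leq C\|V\|\,\|u\|_X(t-s)^{-\alpha}\|\psi\|_{r_1',1}$, hence $\|B(u)\|_X\leq C_0\|V\|\,\|u\|_X$. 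Taking $\delta_{10}=1/(2C_0)$, for $\|V\|\leq\delta_{10}$ the affine map $\Phi$ is a contraction on the complete space $X$, giving the unique fixed point $u\in X$ with $\|u\|_X\leq C\|f\|_q$.

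With $u\in X$ secured, I would return to \eqref{IE}: the linear term satisfies $\|e^{(t-s)\Delta}f\|_r\leq C(t-s)^{-\frac n2(\frac1q-\frac1r)}\|f\|_q$ by \eqref{usual}, and estimating the Duhamel term by \eqref{div-est}/\eqref{div-vari2} a Beta-function computation shows it decays at the same rate $(t-s)^{-\frac n2(\frac1q-\frac1r)}$; this gives \eqref{decay-1} for $r\in[q,r_1)$ (with $r=q$ excluded when $q=1$). A short bootstrap is needed, since \eqref{div-vari2} requires the source exponent below the target: starting from $r$ near $r_1$ and iterating downward — each step using $u(\tau)\in L^{r'}_\sigma$ with the decay just obtained to lower the source exponent — covers the whole range. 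Continuity $u\in C((s,\infty);L^r_\sigma)$ then follows from the same estimates together with strong continuity of $e^{t\Delta}$ for $t>0$ and the H\"older-in-time bound \eqref{div-diff}. For part (2), with $q_1<q_2$ one has $f\in L^p_\sigma$ for all $p\in[q_1,q_2]$, and $w:=u_1-u_2$ solves the homogeneous version of \eqref{IE}, $w=B(w)$; rewriting this as the equation started at time $s+\varepsilon$ (semigroup property of $e^{t\Delta}$ and Fubini) gives, for $t>s+\varepsilon$, $w(t)=e^{(t-s-\varepsilon)\Delta}w(s+\varepsilon)-\int_{s+\varepsilon}^te^{(t-\tau)\Delta}P\,\mathrm{div}\,(w\otimes V+V\otimes w)\,d\tau$. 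Running the same Yamazaki-type estimate as above on $(s+\varepsilon,\infty)$ absorbs the integral term once $\|V\|\leq\delta_{11}$ is small, so on $(s+\varepsilon,\infty)$ the difference $w$ is controlled by $e^{(t-s-\varepsilon)\Delta}w(s+\varepsilon)$; letting $\varepsilon\to0$ and using that $u_1(s+\varepsilon)$ and $u_2(s+\varepsilon)$ both converge to the common datum $f$ forces $w\equiv0$.

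The crux throughout, exactly as anticipated in the introduction, is that in the natural scale $L^{r_1,\infty}$ the operator $e^{t\Delta}P\,\mathrm{div}$ applied to $u\otimes V$ gains only the borderline, non-integrable factor $t^{-1}$, so none of the integrals close by elementary means. The resolution in every instance is to dualize into the Lorentz space $L^{r_1',1}$, use the pair relation $\frac1{r_1'}-\frac1{\rho'}=\frac1n$, and invoke Yamazaki's real-interpolation bound \eqref{yamazaki}, combined with the midpoint time-splitting that absorbs the weight $(\tau-s)^{-\alpha}$ near $\tau=s$ — which is precisely why one needs $\alpha<1$, i.e.\ $r_1$ not too far from $q$. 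The remaining steps (the full range of $L^r$-rates, the continuity statement, and the consistency in part (2)) are routine but somewhat laborious bootstrap-and-limit arguments.
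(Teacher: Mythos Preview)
Your treatment of part (1) is essentially the paper's argument: the same choice of $r_1$ with $\alpha<1$ and $\rho=(1/n+1/r_1)^{-1}>1$, the same duality into $L^{r_1',1}$, the midpoint splitting with Yamazaki's bound \eqref{yamazaki} on the near half, and then the bootstrap back to $L^r$-rates via \eqref{div-vari2}. That part is fine.

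Part (2), however, has a genuine gap. Your argument hinges on the claim that $u_1(s+\varepsilon)$ and $u_2(s+\varepsilon)$ ``both converge to the common datum $f$'' as $\varepsilon\to 0$, so that $w(s+\varepsilon)\to 0$ in some norm and the restarted equation forces $w\equiv 0$. But nothing you proved in part (1) gives strong continuity at the initial time: the proposition itself only asserts $u\in C((s,\infty);L^r_\sigma)$ on the \emph{open} interval. Concretely, the Duhamel part $v(t)$ satisfies $\|v(t)\|_q\le C\|V\|\int_s^t(t-\tau)^{-(1-\alpha)}(\tau-s)^{-\alpha}\,d\tau$, and this Beta integral is scale-invariant, hence merely bounded --- it does not tend to $0$ as $t\to s$. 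In the finer norm $L^{r_1,\infty}$ you only have $\|v(t)\|_{r_1,\infty}\le C(t-s)^{-\alpha}$, which blows up. Weak-$*$ convergence $u(t)\rightharpoonup f$ does hold (test against $\psi\in C_{0,\sigma}^\infty$), but that is not enough to pass to the limit in your bound $\|w\|_{L^\infty(s+\varepsilon,\infty;L^{r,\infty})}\le C\|w(s+\varepsilon)\|_{r,\infty}$. A secondary issue is that $u_1(s+\varepsilon)$ and $u_2(s+\varepsilon)$ live in different spaces, so even granting convergence you would still have to identify a common norm in which $w(s+\varepsilon)\to 0$.

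The paper avoids this entirely by never restarting. It works directly with the homogeneous weak equation $\langle w(t),\psi\rangle=\int_s^t\langle (w\otimes V+V\otimes w)(\tau),\nabla e^{(t-\tau)\Delta}\psi\rangle\,d\tau$ and chooses a common exponent $r\in(\max\{q_2,n'\},r_1(q_1))$ (assuming first $q_2<r_1(q_1)$; the general case is bridged by a finite chain). The weighted quantity $E=\sup_{t\in(s,s+1]}(t-s)^{\frac{n}{2}(\frac{1}{q_1}-\frac{1}{r})}\|w(t)\|_{r,\infty}$ is finite because both $u_j$ obey $L^{r,\infty}$-decay from part (1), and the same Yamazaki computation gives $E\le C\|V\|E$, hence $E=0$. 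On $(s+1,\infty)$ one then has $w\in L^\infty(s+1,\infty;L^{r,\infty})$ and the same absorption closes. The point is that the self-absorbing inequality is run in a norm already known to be finite, so no limit at $t=s$ is needed.
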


\begin{proof}
We take $r_1=r_1(q)$ such that
\begin{equation}
\max\{q,n^\prime\}<r_1<\infty, \qquad
\frac{1}{q}-\frac{1}{r_1}<\frac{2}{n}
\label{expo}
\end{equation}
hence $\alpha\in (0,1)$,
where $1/n^\prime +1/n=1$.
Given $u\in X$, 
which is the Banach space equipped with the norm
\[
\|u\|_X:=\mbox{esssup}\,\big\{(t-s)^\alpha\|u(t)\|_{r_1,\infty};\; 
t\in (s,\infty)\big\},
\]
we are going to estimate
\begin{equation} \label{v}
v(t):=
-\int_s^t e^{(t-\tau)\Delta}P\,\mbox{div $(u\otimes V+V\otimes u)$}(\tau)\,d\tau.
\end{equation}
Choose $p$ so that
$1/p:=1/n+1/r_1$; then, $1<p<n$ since $n^\prime<r_1<\infty$.
The duality relation
$(L^{p^\prime,1}_\sigma)^*=L^{p,\infty}_\sigma$, 
the generalized H\"older inequality, \eqref{basic}, \eqref{standard} and
\eqref{yamazaki}yield that
\begin{equation*}
\begin{split}
|\langle v(t), \psi\rangle| 
&\leq\int_s^t\|(u\otimes V+V\otimes u)(\tau)\|_{p,\infty} 
\|\nabla e^{(t-\tau)\Delta}\psi\|_{p^\prime,1}\, d\tau \\
&\leq C\|V\| \|u\|_X\int_s^t (\tau -s)^{-\alpha} 
\|\nabla e^{(t-\tau)\Delta}\psi\|_{p^\prime,1}\, d\tau \\
&\leq C\|V\| \|u\|_X\big\{\int_s^{(t+s)/2} (\tau -s)^{-\alpha}(t-\tau)^{-1}\,d\tau\,
\|\psi\|_{r_1^\prime,1} \\
&\qquad\qquad\qquad 
+\int_{(t+s)/2}^t (\tau -s)^{-\alpha}
\|\nabla e^{(t-\tau)\Delta}\psi\|_{p^\prime,1}\,d\tau\big\} \\
&\leq c_*\|V\| \|u\|_X\, (t-s)^{-\alpha}\|\psi\|_{r_1^\prime,1}
\end{split}
\end{equation*}
for all $\psi\in C_{0,\sigma}^\infty$,
where
$1/p^\prime +1/p=1/r_1^\prime +1/r_1=1$.
This implies that $v(t)\in L^{r_1,\infty}_\sigma$ with
\begin{equation}
\|v(t)\|_{r_1,\infty}\leq c_*\|V\| \|u\|_X\, (t-s)^{-\alpha}.
\label{w-est}
\end{equation}
 Denote by   $\Psi[u](t)$ the RHS of \eqref{IE}, that is,
$\Psi[u](t)=e^{(t-s)\Delta}f+v(t)$.
Then by \eqref{usual} we have $\Psi[u]\in X$ with
\begin{equation*}
\begin{split}
&\|\Psi[u]\|_X\leq C_0\|f\|_q+c_*\|V\|\|u\|_X, \qquad u\in X, \\
&\|\Psi[u]-\Psi[v]\|_X\leq c_*\|V\|\|u-v\|_X, \qquad u, v\in X.
\end{split}
\end{equation*}
Thus, provided $\|V\|\leq\delta_{10}=\delta_{10}(q)=\frac{1}{2c_*}$, 
there exists a fixed point 
$u\in X$ of the mapping $\Psi$. 
Moreover $\|u\|_X\leq 2C_0\|f\|_q$ and hence for a.e. $t\in (s,\infty)$ we have
\begin{equation} 
\|u(t)\|_{r_1,\infty}\leq 2C_0(t-s)^{-\alpha}\|f\|_q.
\label{zuerst}
\end{equation}

We now show  the following uniform bounds   
for all $t\in (s,\infty)$, $q\in (1,\infty)$,
\begin{equation}
\|u(t)\|_q\leq C\|f\|_q,
\label{unif-bdd}
\end{equation}
\begin{equation}
u\in C((s,\infty); L^q_\sigma).
\label{continuity}
\end{equation}
We recall the exponent  $p=(1/n+1/r_1)^{-1}\in (1,n)$ which was used above.
If $q>p$, then \eqref{unif-bdd} is immediate;
in fact,
by using \eqref{div-vari2} together with \eqref{zuerst} we obtain
\begin{equation}
\begin{split}
\|u(t)\|_q
&\leq C\|f\|_q+C\int_s^t (t-\tau)^{-\frac{n}{2}(\frac{1}{p}-\frac{1}{q})-\frac{1}{2}}
\|V(\tau)\|_{n,\infty}\|u(\tau)\|_{r_1,\infty}\, d\tau \\ 
&\leq C(1+\|V\|)\|f\|_q.
\end{split}
\label{argu}
\end{equation}
To show \eqref{continuity}, it suffices to consider $v(t)$ as defined in (\ref{v}).
Let $s< t<t+h$, then we have
\begin{equation*}
\begin{split}
&\quad v(t+h)-v(t) \\
&=\int_s^t \{U(t+h-\tau)-U(t-\tau)\}H(\tau)\,d\tau
+\int_t^{t+h}U(t+h-\tau)H(\tau)\,d\tau  \\
&=:I+J,
\end{split}
\end{equation*}
where
$U(t):=e^{t\Delta}P\mbox{div}$ is the composite operator
given in Lemma \ref{heat} and
$H(t):=-(u\otimes V+V\otimes u)(t)$.
By \eqref{div-diff} we find for every $\theta\in (0,\alpha)$ that
\begin{equation*}
\begin{split}
\|I\|_q 
&\leq C_\theta
\int_s^t (t-\tau)^{-\frac{n}{2}(\frac{1}{p}-\frac{1}{q})-\frac{1}{2}-\theta}\,
(\tau-s)^{-\alpha}\, d\tau\;\cdot h^\theta  \\
&=C_\theta (t-s)^{-\theta}\, h^\theta.
\end{split}
\end{equation*}
We use \eqref{div-vari2} to obtain
\begin{equation*}
\begin{split}
\|J\|_q
&\leq C\int_t^{t+h}(t+h-\tau)^{-\frac{n}{2}(\frac{1}{p}-\frac{1}{q})-\frac{1}{2}}
(\tau -s)^{-\alpha}\,d\tau \\
&\leq C(t-s)^{-\alpha}h^\alpha.
\end{split}
\end{equation*}
Let $s<\frac{s+t}{2}<t+h<t$, and we next consider
\begin{equation*}
\begin{split}
&\quad v(t+h)-v(t) \\
&=\int_s^{t+h}\{U(t+h-\tau)-U(t-\tau)\}H(\tau)\,d\tau
-\int_{t+h}^t U(t-\tau)H(\tau)\,d\tau  \\
&=:\widetilde I+\widetilde J.
\end{split}
\end{equation*}
Since
\[
\widetilde I=\int_{s-h}^t\{U(t-\tau)-U(t-\tau-h)\}H(\tau+h)\,d\tau,
\]
we use \eqref{div-diff} with $(-h)>0$ to obtain
\[
\|\widetilde I\|_q
\leq C_\theta(t+h-s)^{-\theta}(-h)^\theta
\leq C_\theta\left(\frac{t-s}{2}\right)^{-\theta}(-h)^\theta
\]
for every $\theta\in (0,\alpha)$.
We also find
\[
\|\widetilde J\|_q
\leq C(t+h-s)^{-\alpha}(-h)^\alpha
\leq C\left(\frac{t-s}{2}\right)^{-\alpha}(-h)^\alpha
\]
and we are thus led to \eqref{continuity}.
If $q\in (1,p]$, then 
$q_*\in (n^\prime,r_1]$,
where $1/q_*=1/q-1/n$.
From \eqref{expo} it follows that $q_*>p$. 
Thus  take $r_2$ in such a way that
\[
\max\{p, n^\prime\}<r_2<q_*\leq r_1.
\]
Note that when $q=1$, the choice above for  $r_2$  fails.
It is obvious that the same estimate as in \eqref{argu}
leads to
\[
\|u(t)\|_{r_2,\infty}
\leq C(1+\|V\|)(t-s)^{-\frac{n}{2}(\frac{1}{q}-\frac{1}{r_2})}\|f\|_q.
\]
Estimate \eqref{argu} with  $\|u(\tau)\|_{r_1,\infty}$
 replaced by $\|u(\tau)\|_{r_2,\infty}$ yields \eqref{unif-bdd}.
The continuity \eqref{continuity} is proved in the same way as above.

Let $r\in [q,r_1)$. 
Interpolating $L^r$ between $L^{q,\infty} $ and $L^{r_1,\infty} $ 
in combination   with \eqref{zuerst}, \eqref{unif-bdd}
and \eqref{continuity},
yields \eqref{decay-1} for all $t\in (s,\infty)$
as well as
$u\in C((s,\infty); L^r_\sigma)$
when $q\in (1,\infty)$.

For the proof of \eqref{decay-1} with $q=1$,
it suffices to show
\begin{equation}
\|u(t)\|_{r,\infty}
\leq C(t-s)^{-\frac{n}{2}(1-\frac{1}{r})}\|f\|_1
\label{weak-est}
\end{equation}
for every $r\in (1,r_1)$.
When $r\in [p,r_1)$, we have \eqref{weak-est}
along the same way as in \eqref{argu}.
When $r\in (1,p)$, we find $r_*\in (p,r_1)$,
where $1/r_*=1/r-1/n$.
So, estimate \eqref{argu} in which $\|u(\tau)\|_{r_1,\infty}$
is replaced by $\|u(\tau)\|_{r_*,\infty}$ yields \eqref{weak-est}
for $r\in (1,p)$ as well.
We also find
$u\in C((s,\infty); L^{r,\infty}_\sigma)$ for every $r\in (1,r_1)$,
whose proof is the same as that of \eqref{continuity}.
By interpolation we obtain
$u\in C((s,\infty); L^r_\sigma)$ for every $r\in (1,r_1)$.

\medskip 

We now  show  the uniqueness given by the second  assertion  of the proposition.
As the first step, suppose
\[
1\leq q_1< q_2<r_1(q_1)<r_1(q_2),
\]
where $r_1(\cdot)$ is as in \eqref{expo}.
We then take $r$ such that
\[
\max\{q_2, n^\prime\}< r< r_1(q_1).
\]
Assume that
$\|V\|\leq \min\{\delta_{10}(q_1), \delta_{10}(q_2)\}$.
For $f\in L^{q_1}_\sigma\cap L^{q_2}_\sigma$,
we have two solutions $u_j$ ($j=1,2$) satisfying for all $t\in (s,\infty)$
\[
\|u_j(t)\|_{r,\infty}\leq C(t-s)^{-\frac{n}{2}(\frac{1}{q_j}-\frac{1}{r})}
\|f\|_{q_j}.
\]
Then $w(t):=u_1(t)-u_2(t)$ obeys
\begin{equation}
\langle w(t),\psi\rangle
=\int_s^t 
\langle (w\otimes V+V\otimes w)(\tau), \nabla e^{(t-\tau)\Delta}\psi\rangle\,d\tau,
\qquad \psi\in C_{0,\sigma}^\infty.
\label{differ}
\end{equation}
We will show that $w(t)=0$ first for $t\leq s+1$  and then for $t> s+1$.
It is obvious that the quantity
\[
E:=\sup_{t\in (s,s+1]}
(t-s)^{\frac{n}{2}(\frac{1}{q_1}-\frac{1}{r})}\|w(t)\|_{r,\infty}
\]
is finite.
The same deduction, that leads to \eqref{w-est}, 
shows that
$E\leq C\|V\| E$, which imlies $E=0$ 
provided $\|V\|$ is small enough.
We thus obtain 
$u_1(t)=u_2(t)$ for $s\leq t\leq s+1$.
Then 
\[
\langle w(t),\psi\rangle
=\int_{s+1}^t 
\langle (w\otimes V+V\otimes w)(\tau), \nabla e^{(t-\tau)\Delta}\psi\rangle\,d\tau,
\qquad \psi\in C_{0,\sigma}^\infty
\]
for $t>s+1$ and we know that
$u_1, u_2\in L^\infty(s+1,\infty; L^{r,\infty}_\sigma)$.
Since
\[
|\langle w(t),\psi\rangle|
\leq C\int_{s+1}^t\|V(\tau)\|_{n,\infty}\|w(\tau)\|_{r,\infty}
\|\nabla e^{(t-\tau)\Delta}\psi\|_{p^\prime,1}\,d\tau,
\]
where
$1/p=1/n+1/r$ and $1/p^\prime +1/p=1$,
it follows from \eqref{yamazaki} that
\[
\|w(t)\|_{r,\infty}
\leq C\|V\| \|w\|_{L^\infty(s+1,\infty; L^{r,\infty}_\sigma)}
\]
for $t>s+1$.
As a consequence, 
there is a constant $\delta_{11}=\delta_{11}(q_1,q_2)$ such that
if $\|V\|\leq \delta_{11}$, then
$u_1(t)=u_2(t)$ for all $t\geq s$.

Finally, for general case 
$1\leq q_1<q_2<\infty$,
we take finite exponents $p_1, p_2, \cdots,p_m$ such that
\begin{equation*}
\begin{split}
&q_1<p_1<p_2<...<p_m<q_2, \\
&p_1<r_1(q_1), \qquad
p_{j+1}<r_1(p_j)\quad (1\leq j\leq m-1), \qquad
q_2<r_1(p_m).
\end{split}
\end{equation*}
This is actually possible because $r_1(\cdot)$ is determined
by \eqref{expo}.
Let
\[ 
\|V\|\leq \delta_{11}(q_1,q_2)
:=\min\{\delta_{11}(q_1,p_1), \delta_{11}(p_1,p_2), \cdots, \delta_{11}(p_m,q_2)\}.
\]
Then one can repeat the previous consideration to arrive at the conclusion.
The proof is complete.
\end{proof}

When $f\in L^2_\sigma$, let us identify $T(t,s)f$ with the solution
obtained in Proposition \ref{asym-1}. 
\begin{proposition}
Let $n\geq 3$ and assume \eqref{basic}.
There is a constant
$\delta_{12}\in (0, \min\{\delta_1,\delta_{10}(2)\}]$
such that if $\|V\|\leq \delta_{12}$, then
$T(t,s)f=u(t)$ for every $f\in L^2_\sigma$,
where
$T(t,s)$ is the evolution operator
in Theorem \ref{generate-1},
$u(t)$ denotes the solution obtained in Proposition \ref{asym-1} with $q=2$,
and $\delta_1$ (resp. $\delta_{10}$) is the constant in 
Theorem \ref{generate-1} (resp. Proposition \ref{asym-1}).
\label{identify}
\end{proposition}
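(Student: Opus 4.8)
The plan is to identify $T(\cdot,s)f$ with the mild solution $u$ produced in Proposition \ref{asym-1} (taken with $q=2$) by showing that $T(\cdot,s)f$ itself solves the weak integral equation \eqref{w-IE}, and then invoking a uniqueness property of \eqref{w-IE} that already holds for solutions merely bounded in $L^2_\sigma$ — a class that contains both $T(\cdot,s)f$ (by \eqref{cl-2}, \eqref{energy}) and $u$ (by \eqref{decay-1} with $r=q=2$). The constant will be taken as $\delta_{12}:=\min\{\delta_1,\delta_{10}(2),\delta'\}$, where $\delta'>0$ is the smallness threshold generated by the uniqueness step; this is consistent with $\delta_{12}\in(0,\min\{\delta_1,\delta_{10}(2)\}]$.

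First I would check that $T(\cdot,s)f$ satisfies \eqref{w-IE}. Fix $t>s$ and $\psi\in C_{0,\sigma}^\infty$, and set $\Phi(\tau):=e^{(t-\tau)\Delta}\psi$ for $\tau\in[s,t]$, which is smooth, lies in $C^1([s,t];H^1_\sigma)$, and obeys $\partial_\tau\Phi(\tau)=-\Delta\Phi(\tau)$. By \eqref{cl-1} and the absolute-continuity lemma \cite[Lemma 5.5.1]{Ta}, the scalar function $\tau\mapsto\langle T(\tau,s)f,\Phi(\tau)\rangle$ is absolutely continuous on $[s,t]$, with a.e. derivative $\langle\partial_\tau T(\tau,s)f,\Phi(\tau)\rangle+\langle T(\tau,s)f,\partial_\tau\Phi(\tau)\rangle$. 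Using the evolution equation in \eqref{forward} together with \eqref{ope} and \eqref{bilinear}, the first pairing equals $-a(\tau;T(\tau,s)f,\Phi(\tau))$; its contribution $-\langle\nabla T(\tau,s)f,\nabla\Phi(\tau)\rangle$ cancels against $\langle T(\tau,s)f,-\Delta\Phi(\tau)\rangle=\langle\nabla T(\tau,s)f,\nabla\Phi(\tau)\rangle$, while the remaining terms $-\langle V(\tau)\cdot\nabla T(\tau,s)f,\Phi(\tau)\rangle+\langle V(\tau)\otimes T(\tau,s)f,\nabla\Phi(\tau)\rangle$ combine, after an integration by parts exploiting $\mbox{div }V=0$ and $\mbox{div }\psi=0$, into $\langle(T(\cdot,s)f\otimes V+V\otimes T(\cdot,s)f)(\tau),\nabla\Phi(\tau)\rangle$ (every pairing makes sense by \eqref{cl-1}, \eqref{HS} and smoothness of $\psi$, and the resulting integrand belongs to $L^1(s,t)$). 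Integrating over $(s,t)$ and passing $\tau\to s$ with $\lim_{\tau\to s}\|T(\tau,s)f-f\|_2=0$ from \eqref{forward} then gives precisely \eqref{w-IE} with $u=T(\cdot,s)f$.

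Next, both $T(\cdot,s)f$ and $u$ satisfy \eqref{w-IE}, so $w:=T(\cdot,s)f-u\in L^\infty(s,\infty;L^2_\sigma)$ satisfies the homogeneous equation \eqref{differ}. I would estimate $\|w(t)\|_{2,\infty}$ through the duality $(L^{2,1}_\sigma)^*=L^{2,\infty}_\sigma$: for $\psi\in C_{0,\sigma}^\infty$, the generalized H\"older inequality in Lorentz spaces gives $\|(w\otimes V+V\otimes w)(\tau)\|_{p_0,\infty}\le C\|V\|\,\|w(\tau)\|_{2,\infty}$ with $1/p_0=1/2+1/n$, and — the crucial point — since the conjugate exponent $p_0'=2n/(n-2)$ together with $\psi\in L^{2,1}$ matches the pair in \eqref{pair} (namely $1/2-1/p_0'=1/n$), Yamazaki's estimate \eqref{yamazaki} yields $\int_s^t\|\nabla e^{(t-\tau)\Delta}\psi\|_{p_0',1}\,d\tau\le\int_0^\infty\|\nabla e^{\sigma\Delta}\psi\|_{p_0',1}\,d\sigma\le C\|\psi\|_{2,1}$. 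Inserting these into \eqref{differ} gives $\|w(t)\|_{2,\infty}\le C_*\|V\|\,\sup_{s\le\tau\le t}\|w(\tau)\|_{2,\infty}$ for all $t>s$; taking the supremum over $t\in[s,T]$ (finite since $w\in L^\infty(L^2_\sigma)$) forces $w\equiv0$ on $[s,T]$, hence on $[s,\infty)$, as soon as $\|V\|\le\delta':=1/(2C_*)$. Therefore $T(t,s)f=u(t)$ for all $t\ge s$ and all $f\in L^2_\sigma$.

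The real obstacle lies in this last step: a naive $L^2$-energy or $L^2$-Duhamel estimate of the difference $w$ runs into the scaling-critical, non-integrable kernel $(t-\tau)^{-1}$ arising from $e^{(t-\tau)\Delta}P\,\mbox{div}(w\otimes V)$, so ordinary $L^2$-uniqueness is simply not available; the escape — as throughout this paper's linear theory — is to move to the Lorentz pair $L^{2,1}$–$L^{2,\infty}$ and use Yamazaki's time-integrated gradient bound \eqref{yamazaki}, which turns the borderline $(t-\tau)^{-1}$ into an integrable quantity, after which smallness of $\|V\|$ closes the inequality. Step one is comparatively routine, but care is needed to justify the absolute continuity and the integrations by parts within the Gelfand triple $H^1_\sigma\subset L^2_\sigma\subset(H^1_\sigma)^*$ and to verify that each bilinear term is well defined; this is exactly where the restriction $n\ge3$ enters, since the argument relies on the Sobolev embedding $H^1\hookrightarrow L^{2n/(n-2),2}$ used in \eqref{HS}, which fails for $n=2$.
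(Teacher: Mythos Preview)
Your proof is correct and follows essentially the same route as the paper: first verify that $T(\cdot,s)f$ satisfies the weak integral equation \eqref{w-IE} by differentiating $\tau\mapsto\langle T(\tau,s)f,e^{(t-\tau)\Delta}\psi\rangle$ and using \eqref{forward}--\eqref{ope}, then show uniqueness of solutions to \eqref{w-IE} in $L^\infty(s,\infty;L^{2,\infty}_\sigma)$ via the duality $(L^{2,1}_\sigma)^*=L^{2,\infty}_\sigma$ together with Yamazaki's estimate \eqref{yamazaki} for the pair $(2,2_*)$. Your discussion of why the naive $L^2$ argument fails and why \eqref{yamazaki} is the right tool is accurate; the only superfluous remark is that the integration by parts in step one uses $\mbox{div }V=0$ but not $\mbox{div }\psi=0$.
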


\begin{proof}
Set $\widetilde u(t)=T(t,s)f$.
It follows from \eqref{forward}, \eqref{ope} and integration by parts that
\begin{equation*}
\begin{split}
\frac{d}{d\tau}\langle e^{(t-\tau)\Delta}\widetilde u(\tau), \psi\rangle
&=-a\big(\tau; \widetilde u(\tau), e^{(t-\tau)\Delta}\psi\big)
-\langle \widetilde u(\tau), \Delta e^{(t-\tau)\Delta}\psi \rangle   \\
&=-\langle V(\tau)\cdot\nabla \widetilde u(\tau), e^{(t-\tau)\Delta}\psi \rangle
+\langle V(\tau)\otimes\widetilde u(\tau), \nabla e^{(t-\tau)\Delta}\psi\rangle
\end{split}
\end{equation*}
for all $\psi\in C_{0,\sigma}^\infty$.
Integration over $(s,t)$ implies that $\widetilde u(t)$
satisfies \eqref{w-IE}.
Set $w(t)=u(t)-\widetilde u(t)$, which fulfills \eqref{differ}.
We know that both $u$ and $\widetilde u$ belong to 
$L^\infty(s,\infty; L^2_\sigma)$,
see \eqref{cl-2} and \eqref{decay-1} with $r=q=2$.
We are going to show that
$w=0$ in $L^\infty(s,\infty; L^{2,\infty}_\sigma)$.
By \eqref{differ} we observe
\[
|\langle w(t), \psi\rangle|
\leq C\int_s^t\|V(\tau)\|_{n,\infty}\|w(\tau)\|_{2,\infty}
\|\nabla e^{(t-\tau)\Delta}\psi\|_{2_*,1}\,d\tau
\]
where $1/2_*=1/2-1/n$.
Applying \eqref{yamazaki} with $q=2$ leads  to
\[
\|w(t)\|_{2,\infty}\leq
C\|V\|\|w\|_{L^\infty(s,\infty; L^{2,\infty}_\sigma)}
\]
hence choosing $\|V\|$ sufficiently small yields the conclusion of the proposition.
\end{proof}

We are now in a position to prove Theorem \ref{linear-main}.
\bigskip

\noindent
{\it Proof of Theorem \ref{linear-main}}.
Given $q\in [1,\infty)$,
suppose
\begin{equation}
\|V\|\leq \delta_2(q):=\min\{\delta_{11}(q,2), \delta_{12}\},
\label{small-1}
\end{equation}
where $\delta_{11}$ and $\delta_{12}$ are the constants in  the
Propositions \ref{asym-1} and \ref{identify}, respectively.
Let  $t\in (s,\infty)$, $r\in [q,r_1)$ 
($r\in (1,r_1)$ when $q=1$) where $r_1=r_1(q)$ is the exponent in Proposition \ref{asym-1}.
Let  $f\in C_{0,\sigma}^\infty$,  then the conclusions in   Propositions  \ref{asym-1} and \ref{identify} imply
\eqref{L^q-L^r} and \eqref{L^1-L^r}

Thus $T(t,s)$ can be extended to a bounded linear operator from $L^q_\sigma$
to $L^r_\sigma$ with $r\in [q,r_1)$
($r\in (1,r_1)$ when $q=1$),
which satisfies \eqref{L^q-L^r} and \eqref{L^1-L^r}
for all $f\in L^q_\sigma$.
It is also verified from \eqref{semi} in $L^2_\sigma$ that
\begin{equation}
\begin{split}
T(t,\tau)T(\tau,s)f=T(t,s)f \quad
&\mbox{in $L^r_\sigma$}, \qquad  0\leq s\leq\tau\leq t,  \\
&(s<t \;\;\mbox{if $r>q>1$}; \quad s<\tau \;\;\mbox{if $r>q=1$})
\end{split}
\label{semi-2}
\end{equation}
with $r\in [q,r_1)$ ($r\in (1,r_1)$ when $q=1$)
for every $f\in L^q_\sigma$, in particular,
we have \eqref{semi} in ${\cal L}(L^q_\sigma)$ unless $q=1$.

Let $r_0\in (q,\infty)$.
When $r_0>r_1(q)$,
we recall \eqref{expo} to take finite exponents $q_1, q_1, \cdots, q_k$ 
such that
\begin{equation*}
\begin{split}
&\max\{q,n^\prime\}<q_1<q_2<\cdots<q_k<r_0,  \\
&q_1<r_1(q), \qquad
q_{j+1}<r_1(q_j)\quad (1\leq j\leq k-1), \qquad
r_0\leq r_1(q_k).
\end{split}
\end{equation*}
Set
\begin{equation}
\begin{split}
&\delta_3=\delta_3(q,r_0)
:=\min\{\delta_2(q), \delta_2(q_1),\cdots, \delta_2(q_k)\},  \\
&\delta_4=\delta_4(r_0):=\delta_3(1,r_0),
\end{split}
\label{small-2}
\end{equation}
and assume $\|V\|\leq \delta_3$ ($\|V\|\leq\delta_4$ when $q=1$).
Then the semigroup property
\eqref{semi-2} combined with the previous consideration yields
both \eqref{L^q-L^r} and \eqref{L^1-L^r}
for desired $r<r_0$.
Let $f\in L^1_\sigma$.
Once we know 
$T(t,s)f\in L^r_\sigma$ for $r\in (1,r_0)$,
we obtain \eqref{semigroup} for such $r$ from \eqref{semi-2} for $r\in (1,r_1)$.
This completes the proof.
\hfill
$\Box$
\bigskip

We supplement the following duality relation.
\begin{proposition}
Let $n\geq 3$, $\frac{2n}{n+2}\leq q\leq 2$ and assume
\eqref{basic}.
Suppose 
$\|V\|\leq\delta_2(q)$, where
$\delta_2$ is the constant in Theorem \ref{linear-main}.
Then
\begin{equation}
\langle T(t,s)f, g\rangle=\langle f, S(t,s)g\rangle 
\label{duality3}
\end{equation}
for all $f\in L^q_\sigma$, $g\in L^2_\sigma$, $t>0$ and
a.e. $s\in [0,t)$.
\label{dual}
\end{proposition}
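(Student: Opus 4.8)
The plan is to deduce \eqref{duality3} from the $L^2$-duality of Proposition \ref{duality2} by a density argument in the first slot. Since $C_{0,\sigma}^\infty$ is dense in $L^q_\sigma$ for $1<q<\infty$, I would fix $f\in L^q_\sigma$ and pick $f_k\in C_{0,\sigma}^\infty$ with $f_k\to f$ in $L^q$. Each $f_k$ belongs to $L^2_\sigma$, so Proposition \ref{duality2} yields, for every $g\in L^2_\sigma$ and every $s\in[0,t]$,
\[
\langle T(t,s)f_k,\,g\rangle=\langle f_k,\,S(t,s)g\rangle .
\]
It then suffices to pass to the limit $k\to\infty$ on both sides.

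For the right-hand side I would exploit the smoothing of the backward flow: by \eqref{cl-adj} we have $S(t,\cdot)g\in L^2(0,t;H^1_\sigma)$, hence $S(t,s)g\in H^1_\sigma$ for a.e.\ $s\in(0,t)$, and since $n\ge3$ the Sobolev embedding $H^1\hookrightarrow L^{2_*}$ with $1/2_*=1/2-1/n$ together with interpolation against $L^2$ gives $S(t,s)g\in L^{q^\prime}$ for every $q^\prime\in[2,2_*]$, i.e.\ for every $q\in[\frac{2n}{n+2},2]$ --- precisely the range in the statement. Therefore $\langle f_k,S(t,s)g\rangle\to\langle f,S(t,s)g\rangle$ for a.e.\ $s\in[0,t)$.

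For the left-hand side I would use that $T(t,s)$ gains integrability up to $L^2$ on this range of $q$. One may take the exponent $r_1(q)$ of Proposition \ref{asym-1} (equivalently, $r_0$ in Theorem \ref{linear-main}) strictly larger than $2$: the second condition in \eqref{expo} evaluated at $r_1=2$ reads $1/q-1/2<2/n$, which holds whenever $q>\frac{2n}{n+4}$, and $\frac{2n}{n+2}>\frac{2n}{n+4}$; moreover $\max\{q,n^\prime\}\le2$ since $n\ge3$ and $q\le2$, so $2$ is admissible. Hence \eqref{L^q-L^r} with $r=2$ gives $\|T(t,s)h\|_2\le C(t-s)^{-\frac n2(\frac1q-\frac12)}\|h\|_q$ for $h\in L^q_\sigma$, consistently with the $L^2$-evolution operator on $L^q_\sigma\cap L^2_\sigma$ by Propositions \ref{asym-1} and \ref{identify}. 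Applying this with $h=f_k-f$ shows $T(t,s)f_k\to T(t,s)f$ in $L^2_\sigma$, so that $\langle T(t,s)f_k,g\rangle\to\langle T(t,s)f,g\rangle$ for every $s<t$. Combining the two limits proves \eqref{duality3} for a.e.\ $s\in[0,t)$.

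I expect the only delicate point to be the matching of integrability exponents so that both limiting pairings are licit with merely $f\in L^q$ and $g\in L^2$: the right-hand side forces $q\ge\frac{2n}{n+2}$ (so that $q^\prime\le2_*$), while the left-hand side requires $T(t,s)\colon L^q_\sigma\to L^2_\sigma$ to be bounded, which must be checked to be compatible with the smallness threshold actually invoked --- that is, that $r_1(q)$ can be, and is, taken above $2$ within the constant $\delta_2(q)$ used here (shrinking $\delta_2(q)$ to a smaller constant depending only on $q$ if necessary). Once these two requirements are reconciled, the remainder is a routine density-and-continuity argument.
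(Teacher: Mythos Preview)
Your proposal is correct and follows essentially the same route as the paper: approximate $f\in L^q_\sigma$ by $f_k\in C_{0,\sigma}^\infty$, invoke the $L^2$ duality of Proposition~\ref{duality2} for each $f_k$, and pass to the limit using $S(t,s)g\in H^1_\sigma\subset L^{q^\prime}$ for a.e.\ $s$ on the right and the $L^q$--$L^2$ bound \eqref{L^q-L^r} on the left. Your verification that $r_1(q)$ may be taken above $2$ for $q\in[\frac{2n}{n+2},2]$ is exactly the check the paper makes (and no further shrinking of $\delta_2(q)$ is needed, since \eqref{L^q-L^r} already holds for $r\in[q,r_1(q))$ under $\|V\|\le\delta_2(q)$).
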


\begin{proof}
We first observe that the both sides of \eqref{duality3} make sense.
By \eqref{cl-adj} we have
$S(t,s)g\in H^1_\sigma\subset L^2_\sigma\cap L^{2_*}\subset L^{q/(q-1)}$
for a.e. $s\in [0,t)$
since $2\leq q/(q-1)\leq 2_*$, where
$1/2_*=1/2-1/n$.
On the other hand,
under the condition \eqref{small-1}, we have
$T(t,s)f\in L^r_\sigma$
for $r\in [q,r_1)$ with \eqref{L^q-L^r}.
In view of \eqref{expo} and by $\frac{2n}{n+2}\leq q\leq 2$,
we can choose $r_1>2$, so that 
$T(t,s)f\in L^2_\sigma$.

We now take $\{f_k\}\subset C_{0,\sigma}^\infty$ such that
$\displaystyle{\lim_{k\to\infty}\|f_k-f\|_q=0}$.
By Proposition \ref{duality2} we have
\[
\langle T(t,s)f_k, g\rangle=\langle f_k, S(t,s)g\rangle, \qquad
g\in L^2_\sigma,\; k=1,2,...
\]
from which \eqref{duality3} follows by letting $k\to\infty$.
This completes the proof.
\end{proof}

The following assertion is a simple corollary of \eqref{L^q-L^r}.
\begin{proposition}
Let $n\geq 3$, $m>0$ and assume \eqref{basic}.
There is a constant $\delta_{13}=\delta_{13}(m)\in (0,\delta_1]$ such that 
if $\|V\|\leq\delta_{13}$, then 
\begin{equation} 
\lim_{t\to\infty}
\frac{1}{(1+t)^m}\int_0^t m(1+\tau)^{m-1}\|T(\tau,0)f\|_2^2\,d\tau=0
\label{lin}
\end{equation}
\label{lin-decay}
for every $f\in L^2_\sigma$,
where $\delta_1$ is the constant in Theorem \ref{generate-1}.
\end{proposition}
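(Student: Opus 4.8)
The plan is to deduce \eqref{lin} from the pointwise energy decay $\|T(t,0)f\|_2\to 0$ as $t\to\infty$, combined with an elementary weighted‑averaging estimate; I expect the constant $\delta_{13}$ to be chosen independently of $m$ (so the dependence $\delta_{13}(m)$ is only formal). First I would fix an exponent $q_0\in(1,2)$ and a number $r_0\in(2,\infty)$ and set $\delta_{13}:=\delta_3(q_0,r_0)\in(0,\delta_1]$, where $\delta_3$ is the constant from Theorem \ref{linear-main}. Assuming $\|V\|\le\delta_{13}$, estimate \eqref{L^q-L^r} with $q=q_0$, $r=2\in[q_0,r_0)$ and $s=0$ gives, for every $f\in L^{q_0}_\sigma\cap L^2_\sigma$, the bound $\|T(t,0)f\|_2\le C\,t^{-\frac n2(\frac1{q_0}-\frac12)}\|f\|_{q_0}$, which tends to $0$ as $t\to\infty$ since $\frac n2(\frac1{q_0}-\frac12)>0$.

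Next I would pass to a general $f\in L^2_\sigma$ by density. Given $\eta>0$, choose $f_1\in C_{0,\sigma}^\infty\subset L^{q_0}_\sigma\cap L^2_\sigma$ with $\|f-f_1\|_2<\eta$; since \eqref{energy} yields the contraction property $\|T(t,0)\|_{{\cal L}(L^2_\sigma)}\le 1$, we get $\|T(t,0)(f-f_1)\|_2\le\eta$ for all $t$, while $\|T(t,0)f_1\|_2\to 0$ by the previous step. Hence $\limsup_{t\to\infty}\|T(t,0)f\|_2\le\eta$, and letting $\eta\downarrow 0$ gives $\lim_{t\to\infty}\|T(t,0)f\|_2=0$ for every $f\in L^2_\sigma$.

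For the passage to \eqref{lin} I would write $\phi(\tau):=\|T(\tau,0)f\|_2^2$, which is continuous in $\tau$ by \eqref{cl-1} and satisfies $0\le\phi(\tau)\le\|f\|_2^2$ by \eqref{energy} together with $\phi(\tau)\to 0$. Using $\int_0^t m(1+\tau)^{m-1}\,d\tau=(1+t)^m-1\le(1+t)^m$, and given $\eta>0$ picking $T_\eta$ with $\phi(\tau)<\eta$ on $[T_\eta,\infty)$, I split the integral in \eqref{lin} at $T_\eta$ to obtain, for $t>T_\eta$,
\[
\frac{1}{(1+t)^m}\int_0^t m(1+\tau)^{m-1}\phi(\tau)\,d\tau
\le \frac{\|f\|_2^2\big((1+T_\eta)^m-1\big)}{(1+t)^m}+\eta .
\]
The first term vanishes as $t\to\infty$ with $T_\eta$ fixed, so the $\limsup$ of the left‑hand side is $\le\eta$; letting $\eta\downarrow 0$ yields \eqref{lin}.

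I do not anticipate a genuine obstacle: all the substance lies in the subcritical decay estimate \eqref{L^q-L^r} with target exponent $2$, which is already established in Theorem \ref{linear-main}, while the remainder is the soft density argument built on the contraction property \eqref{energy} and the elementary Cesàro estimate above. The only point requiring a little care is selecting $\delta_{13}$ small enough that \eqref{L^q-L^r} is available for some fixed $q_0\in(1,2)$ with $r=2$ admissible; once that choice is made, everything else is routine.
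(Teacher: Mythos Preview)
Your argument is correct and uses the same ingredients as the paper's proof: the $L^q$-$L^r$ estimate \eqref{L^q-L^r} applied to $f_\varepsilon\in C_{0,\sigma}^\infty$, the $L^2$ contraction \eqref{energy} for the remainder $f-f_\varepsilon$, and density. The paper integrates the weighted average directly after the splitting (choosing $q$ so that $\gamma=n(1/q-1/2)<m$), whereas you first establish the pointwise decay $\|T(t,0)f\|_2\to 0$ and then invoke a Ces\`aro-type estimate; your organization has the minor bonus, which you noticed, that $\delta_{13}$ can be taken independent of $m$.
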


\begin{proof}
Given $m>0$, we take $q\in (1,2)$ satisfying
$\gamma:=n(\frac{1}{q}-\frac{1}{2})\in (0,m)$.
We fix $r_0\in (2,\infty)$ and assume
$\|V\|\leq \delta_{13}(m):=\delta_3(q,r_0)$,
where $\delta_3$ is the constant in Theorem \ref{linear-main}.
Given $f\in L^2_\sigma$ and arbitrary $\varepsilon >0$,
there is $f_\varepsilon\in C^\infty_{0,\sigma}$ such that
$\|f-f_\varepsilon\|_2\leq\varepsilon$.
Since
$\|T(\tau,0)f_\varepsilon\|_2\leq C(1+\tau)^{-\gamma/2}
(\|f_\varepsilon\|_q+\|f_\varepsilon\|_2)$
by \eqref{L^1-L^r}, we find
\[
\frac{1}{(1+t)^m}\int_0^t m(1+\tau)^{m-1}\|T(\tau,0)f\|_2^2 d\tau
\leq C\varepsilon^2
+\frac{Cm(\|f_\varepsilon\|_q+\|f_\varepsilon\|_2)^2}{(m-\gamma)(1+t)^\gamma}
\]
which implies \eqref{lin}.
\end{proof}

The following proposition plays a key role in deduction of energy decay
of the Navier-Stokes flow.
\begin{proposition}
Let $n\geq 3$, $q\in (1,\infty)$ and assume \eqref{basic}.
There is a constant $\delta_{14}=\delta_{14}(q)\in (0,\delta_2(q)]$ 
such that
if $\|V\|\leq\delta_{14}$,
then the composite operator
$T(t,s)P\mbox{\emph{div}}$
extends to a bounded operator on $L^q$ with estimate
\begin{equation} 
\|T(t,s)P\mbox{\emph{div} $F$}\|_q\leq C(t-s)^{-1/2}\|F\|_q
\label{evo-div} 
\end{equation}
for all $t\in (s,\infty)$ and $F\in L^q(\mathbb R^n)^{n\times n}$,
where $\delta_2$ is the constant in Theorem \ref{linear-main}.
\label{evo-est}
\end{proposition}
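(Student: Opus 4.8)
The plan is to derive the $L^q$ estimate \eqref{evo-div} for the composite operator $T(t,s)P\mbox{div}$ by combining the integral equation \eqref{IE} (applied with divergence-form data) with the heat-semigroup estimate \eqref{div-est} and the $L^q$-$L^r$ bounds \eqref{L^q-L^r} for $T(t,s)$ already established in Theorem \ref{linear-main}. Concretely, for $F\in C_0^\infty(\mathbb R^n)^{n\times n}$ one writes $u(t):=T(t,s)P\mbox{div}\,F$ as the solution of the integral equation obtained from \eqref{IE} by replacing the initial datum $e^{(t-s)\Delta}f$ with $e^{(t-s)\Delta}P\mbox{div}\,F$, namely
\[
u(t)=e^{(t-s)\Delta}P\mbox{div}\,F
-\int_s^t e^{(t-\tau)\Delta}P\,\mbox{div}\,(u\otimes V+V\otimes u)(\tau)\,d\tau .
\]
One should first justify that this is legitimate: since $P\mbox{div}\,F\in L^q_\sigma$ whenever $F$ is smooth and compactly supported, Proposition \ref{asym-1} (and its identification with $T(t,s)$ via Proposition \ref{identify}, extended to general $q$ as in the proof of Theorem \ref{linear-main}) applies with initial datum $P\mbox{div}\,F$, so $u$ is exactly $T(t,s)P\mbox{div}\,F$ and it satisfies the displayed equation.

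Next I would estimate the two terms. The linear term is controlled directly by \eqref{div-est} (with $q=r$), giving $\|e^{(t-s)\Delta}P\mbox{div}\,F\|_q\leq C(t-s)^{-1/2}\|F\|_q$. For the Duhamel term, fix a suitable $r\in(q,r_0)$ with $1/q-1/r<2/n$ so that \eqref{L^q-L^r} holds in the relevant range; apply $L^{r}$-$L^{q}$-type mapping for $T(t-\tau,\tau')$ together with the generalized Hölder inequality $\|u\otimes V+V\otimes u\|_{p}\le C\|V\|_{n,\infty}\|u\|_{r}$ where $1/p=1/n+1/r$, and then a self-similar bootstrap: denoting $\Phi(t):=\sup_{\tau\in(s,t]}(\tau-s)^{1/2}\|u(\tau)\|_q$, split the time integral at $(t+s)/2$ and use the power $(t-\tau)^{-\frac n2(\frac1p-\frac1q)-\frac12}$ from \eqref{L^q-L^r}/\eqref{div-vari2} against $(\tau-s)^{-1/2}$ from the a priori bound to obtain $\Phi(t)\le C\|F\|_q+C\|V\|\,\Phi(t)$. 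One must check that the exponent $\frac n2(\frac1p-\frac1q)+\frac12=1-\frac n2(\frac1q-\frac1r)\cdot\frac{1}{\,}$ wait — more precisely, with $1/p-1/q=1/n+1/r-1/q$, the singularity at $\tau\to t$ is integrable precisely because $1/q-1/r<2/n$, and the factor at $\tau\to s$ is $(\tau-s)^{-1/2}$ which is also integrable; the two contributions scale like $(t-s)^{-1/2}$ by homogeneity. Absorbing the small term for $\|V\|\le\delta_{14}(q)$ chosen small enough (and $\le\delta_2(q)$) yields $\Phi(t)\le C\|F\|_q$, i.e.\ \eqref{evo-div} for smooth $F$, and density of $C_0^\infty$ in $L^q$ extends it to all $F\in L^q$.

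I would also have to be a little careful that the solution furnished by Proposition \ref{asym-1} with datum $P\mbox{div}\,F$ genuinely coincides with the object $T(t,s)P\mbox{div}\,F$ one wants to bound; this is exactly the content of the uniqueness and identification results (Propositions \ref{asym-1}(2), \ref{identify} and the proof of Theorem \ref{linear-main}), so it suffices to note that $P\mbox{div}\,F\in L^q_\sigma\cap L^2_\sigma$ for $F\in C_0^\infty$ and invoke those. The main obstacle, I expect, is the Duhamel estimate in the borderline situation where $q$ is close to the exponent forced by \eqref{expo}: one needs the gain of $t^{-1/2}$ (not merely boundedness) to survive the convolution in time, and this requires choosing the auxiliary exponent $r$ — hence the smallness constant $\delta_{14}(q)$ — carefully so that both endpoint singularities of the time integral are integrable and produce the correct self-similar rate. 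Everything else (the heat estimates, the Hölder step, the final density argument) is routine once that bookkeeping is set up.
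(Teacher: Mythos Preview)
Your overall strategy is right --- write $u(t)=T(t,s)P\mbox{div}\,F$ as the solution of the integral equation with initial data $e^{(t-s)\Delta}P\mbox{div}\,F$, identify it with the evolution operator via Propositions \ref{asym-1} and \ref{identify}, and then close a self-similar estimate. The justification and the density argument are fine. The gap is in the Duhamel step: your bootstrap quantity $\Phi(t)=\sup_{\tau\in(s,t]}(\tau-s)^{1/2}\|u(\tau)\|_q$ controls only $\|u\|_q$, yet in the H\"older step you write $\|u\otimes V\|_p\le C\|V\|_{n,\infty}\|u\|_r$ with $r>q$. There is no way to feed $\|u(\tau)\|_r$ back into $\Phi$, so the inequality $\Phi(t)\le C\|F\|_q+C\|V\|\Phi(t)$ does not follow. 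If instead you keep everything in $L^q$ (take $r=q$, $1/p=1/n+1/q$), then the kernel $e^{(t-\tau)\Delta}P\mbox{div}:L^{p,\infty}\to L^q$ carries the rate $(t-\tau)^{-\frac n2(\frac1p-\frac1q)-\frac12}=(t-\tau)^{-1}$, and the time integral $\int_s^t(t-\tau)^{-1}(\tau-s)^{-1/2}\,d\tau$ diverges. This borderline is exactly the obstacle you anticipated, but choosing an auxiliary $r$ does not remove it unless the bootstrap itself is run in $L^r$ (or rather $L^{r,\infty}$) with the matching weight.

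The paper's proof fixes this by first closing a weighted estimate in a \emph{higher} Lorentz space $L^{r_2,\infty}$, with $\max\{q,n'\}<r_2$, $1/q-1/r_2<1/n$, and weight $(t-s)^{-\beta}$ where $\beta=\frac n2(\frac1q-\frac1{r_2})+\frac12\in(\frac12,1)$. The point is that the closing argument there is \emph{not} done by a direct kernel bound but by the Yamazaki duality trick as in the proof of \eqref{w-est}: one pairs with $\psi\in C_{0,\sigma}^\infty$ and uses \eqref{yamazaki} to integrate $\|\nabla e^{(t-\tau)\Delta}\psi\|_{p',1}$ in $\tau$, which is precisely what recovers integrability at the critical exponent. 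Once $\|u(t)\|_{r_2,\infty}\le C(t-s)^{-\beta}\|F\|_q$ is in hand, the $L^q$ bound follows by a single application of \eqref{div-vari2} to the Duhamel term (now with $\|u(\tau)\|_{r_2,\infty}$ inside), where the time integral is genuinely convergent and scales like $(t-s)^{-1/2}$. So the missing idea in your proposal is the intermediate $L^{r_2,\infty}$ step closed via \eqref{yamazaki}; a direct $L^q$ bootstrap cannot be made to work.
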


\begin{proof}
It suffices to show \eqref{evo-div} for
$F\in C_0^\infty(\mathbb R^n)^{n\times n}$.
Consider the integral equation \eqref{IE}
with $f=P\mbox{div $F$}$.
Given $q\in (1,\infty)$, we take $r_2=r_2(q)$ such that
\begin{equation} 
\max\{q, n^\prime\}<r_2<\infty, \qquad
\frac{1}{q}-\frac{1}{r_2}<\frac{1}{n}
\label{expo-div} 
\end{equation}
where $1/n^\prime+1/n=1$.
Note that we cannot take such $r_2$ when $q=1$.
Set
\[
\beta:=\frac{n}{2}\left(\frac{1}{q}-\frac{1}{r_2}\right)+\frac{1}{2}
\in \left(\frac{1}{2}, 1\right).
\]
For $F\in C_0^\infty$ we may regard $f=P\mbox{div $F$}\in L^q_\sigma$,
for which \eqref{IE} admits a solution 
$u(t)=T(t,s)f$ by
Proposition \ref{asym-1} together with Proposition \ref{identify}.
We have also
estimate \eqref{decay-1} for $r\in [q,r_1)$
under the condition \eqref{small-1},
where $r_1$ is the exponent in Proposition \ref{asym-1}.
Taking account of \eqref{expo} and \eqref{expo-div},
we may suppose $r_2\in (q,r_1)$.
Therefore, for each $T\in (s,\infty)$ we find that
\[
E(T):=\sup_{t\in (s,T]}(t-s)^\beta\|u(t)\|_{r_2,\infty}
\]
is finite.
By \eqref{div-est} we have
\[
\|e^{(t-s)\Delta}f\|_{r_2,\infty}=
\|e^{(t-s)\Delta}P\mbox{div $F$}\|_{r_2,\infty}\leq C(t-s)^{-\beta}\|F\|_q
\]
for $t\in (s,\infty)$, and thus
we follow exactly the same way 
as in the proof of \eqref{w-est},
in which we do need $r_2>n^\prime$ from \eqref{expo-div},
to obtain
\[
E(T)\leq C\|F\|_q+C\|V\| E(T)
\]
with some $C>0$ independent of $T$.
Hence, there is a constant $\delta_{15}=\delta_{15}(q)$ such that,
whenever $\|V\|\leq \delta_{15}$, we find
$E(T)\leq C\|F\|_q$ for all $T>s$, which implies that
\[
\|u(t)\|_{r_2,\infty}\leq C(t-s)^{-\beta}\|F\|_q
\]
for all $t\in (s,\infty)$.
Let
$\|V\|\leq \delta_{14}=\delta_{14}(q):=\min\{\delta_2(q), \delta_{15}(q)\}$.
Using the estimate above combined with \eqref{div-est} and \eqref{div-vari2},
we find
\begin{equation*}
\begin{split} 
&\qquad \|u(t)\|_q  \\ 
&\leq C(t-s)^{-1/2}\|F\|_q
+C\int_s^t (t-\tau)^{-\frac{n}{2}(\frac{1}{n}+\frac{1}{r_2}-\frac{1}{q})-
\frac{1}{2}}
\|V(\tau)\|_{n,\infty}\|u(\tau)\|_{r_2,\infty}d\tau \\
&\leq C(t-s)^{-1/2}\|F\|_q
\end{split} 
\end{equation*}
which completes the proof of \eqref{evo-div} for $F\in C_0^\infty$.
\end{proof}

We next derive the decay rate of the evolution operator when 
the initial velocity $f$ fulfills a moment condition as well as $f\in L^1_\sigma$.
\begin{proposition}
Let $n\geq 3$ and assume \eqref{basic}.
Given $r_0\in (1,\infty)$ and $\varepsilon\in (0,1/2)$, there is a constant
$\delta_{16}=\delta_{16}(r_0,\varepsilon)\in (0,\delta_4(r_0)]$
such that if $\|V\|\leq\delta_{16}$,
then the evolution operator $T(t,s)$ in Theorem \ref{linear-main} enjoys
\begin{equation} 
\|T(t,s)f\|_r
\leq C_\varepsilon\, (t-s)^{-\frac{n}{2}(1-\frac{1}{r})}
(1+t-s)^{-\frac{1}{2}+\varepsilon}
\int_{\mathbb R^n}(1+|y|)|f(y)|\,dy
\label{moment-est} 
\end{equation}
for all $t\in (s,\infty)$, $r\in (1,r_0)$ and 
$f\in L^1_\sigma$ with 
$\int |y||f(y)|dy<\infty$,
where $\delta_4$ is the constant in Theorem \ref{linear-main}.
\label{evo-moment}
\end{proposition}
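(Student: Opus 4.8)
The plan is to combine the integral equation \eqref{IE} (equivalently its weak form \eqref{w-IE}) with the moment estimate \eqref{moment} for the heat semigroup and the previously established $L^q$-$L^r$ decay \eqref{L^1-L^r} for $T(t,s)$, using a bootstrap in the spirit of the proof of Proposition \ref{asym-1}. Write $u(t)=T(t,s)f$, so that by Proposition \ref{asym-1} together with Proposition \ref{identify}, $u$ solves \eqref{IE} with the solenoidal $L^1$-datum $f$. Set $\|f\|_*:=\int_{\mathbb R^n}(1+|y|)|f(y)|\,dy$. The leading term $e^{(t-s)\Delta}f$ already satisfies the desired bound (indeed better, with $\varepsilon=0$) by \eqref{moment}. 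So the task is to control the Duhamel term
\[
v(t)=-\int_s^t e^{(t-\tau)\Delta}P\,\mbox{div $(u\otimes V+V\otimes u)$}(\tau)\,d\tau,
\]
and the goal is to show that it inherits the same space-time decay, up to the loss $t^\varepsilon$.

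First I would fix $r_1=r_1(1)>1$ from Proposition \ref{asym-1} with $q=1$, and choose an auxiliary exponent, say $\rho\in(n',r_1)$ with $\tfrac1\rho$ close to $1-\tfrac1n$, so that \eqref{L^1-L^r} gives $\|u(\tau)\|_{\rho,\infty}\le C(\tau-s)^{-\frac n2(1-\frac1\rho)}\|f\|_1$; this is the "short-time" profile of $u$. To capture decay at large time I also need the global-in-time integrability coming from \eqref{energy}-type control together with \eqref{L^1-L^r}: specifically $u\in L^\infty(s,\infty;L^{\rho,\infty}_\sigma)$ after the initial layer. The heart of the argument is then a Fubini/convolution estimate: split $\int_s^t=\int_s^{(s+t)/2}+\int_{(s+t)/2}^t$, use \eqref{div-vari2} (or \eqref{div-est}) on $e^{(t-\tau)\Delta}P\mathrm{div}$ with the factor $\|(u\otimes V+V\otimes u)(\tau)\|_{p,\infty}\le C\|V\|\,\|u(\tau)\|_{\rho,\infty}$ where $1/p=1/n+1/\rho$, and feed in the two-sided bound on $\|u(\tau)\|_{\rho,\infty}$ (the $(\tau-s)$-singular profile near $\tau=s$, the $(1+\tau-s)$-decay profile for large $\tau$). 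Carrying out the $\tau$-integration produces exactly a bound of the form $(t-s)^{-\frac n2(1-\frac1r)}(1+t-s)^{-\frac12+\varepsilon}\|f\|_*$ for $r$ in a range $(1,r_1)$, where the $\varepsilon>0$ appears because the borderline time-exponent in the integral is logarithmically divergent and must be absorbed by sacrificing $(1+t-s)^{-\varepsilon}$; this forces a smallness condition $\|V\|\le\delta_{16}(r_0,\varepsilon)$ to close a fixed-point/absorption inequality $E(T)\le C\|f\|_*+C\|V\|E(T)$ for the weighted sup-norm $E(T):=\sup_{s<t\le T}(t-s)^{\frac n2(1-\frac1r)}(1+t-s)^{\frac12-\varepsilon}\|u(t)\|_{r,\infty}$.

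Once the weighted estimate is established for one exponent $r=r^\sharp\in(1,r_1)$, I would upgrade to arbitrary $r\in(1,r_0)$ by the same iteration-of-exponents device used in the proof of Theorem \ref{linear-main}: apply \eqref{semigroup}, i.e. $T(t,s)f=T(t,\tfrac{s+t}{2})\,T(\tfrac{s+t}{2},s)f$, estimate the inner factor in $L^{r^\sharp}_\sigma$ by what we just proved, and the outer factor by the plain $L^{r^\sharp}$-$L^r$ decay \eqref{L^q-L^r}; chaining through finitely many intermediate exponents $q_1<\cdots<q_k$ with $q_{j+1}<r_1(q_j)$ reaches any $r<r_0$ and fixes $\delta_{16}$ as the minimum of the relevant constants, staying below $\delta_4(r_0)$. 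Finally pass from $f\in C^\infty_{0,\sigma}$ to general $f\in L^1_\sigma$ with finite first moment by density, noting that $\|\cdot\|_*$ is exactly the norm in which one approximates and that both sides of \eqref{moment-est} are continuous in that norm.

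The main obstacle I anticipate is the logarithmic borderline in the Duhamel time-integral: the natural estimate of $v(t)$ lands precisely on a non-integrable power of $(t-\tau)$ or $(1+\tau-s)$ at the critical scaling, and there is no room to spare — this is the structural reason the unpleasant $\varepsilon>0$ cannot be removed (just as in \eqref{almost}), and why one cannot take $\varepsilon=0$ unless one had the still-open $L^1$ bound \eqref{div-L^1}. Getting the bookkeeping of the two regimes (the $(t-s)$-singularity for $t$ near $s$ versus the $(1+t-s)$-decay for $t$ large) to interlock cleanly in a single weighted norm, so that the absorption inequality actually closes with a $V$-independent constant $C$, is the delicate point; everything else is a routine combination of Lemma \ref{heat}, \eqref{L^1-L^r}, and the semigroup property.
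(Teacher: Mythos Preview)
Your outline has the right architecture (integral equation, absorption in a weighted norm, then bootstrap via \eqref{semigroup} to reach all $r<r_0$), but there is a genuine gap in the core step. You propose to estimate the Duhamel term using \eqref{div-vari2} or \eqref{div-est}; however, with $\|(u\otimes V+V\otimes u)\|_{p,\infty}\le C\|V\|\,\|u\|_{\rho,\infty}$ and $1/p=1/n+1/\rho$, the kernel you get in $L^{\rho,\infty}$ is exactly $(t-\tau)^{-1}$, which is not integrable, so the absorption inequality $E(T)\le C\|f\|_*+C\|V\|E(T)$ cannot close this way. The passage ``feed in the $(1+\tau-s)$-decay profile for large $\tau$'' is also circular: that extra large-time decay of $\|u(\tau)\|_{\rho,\infty}$ is precisely what you are trying to prove, and neither \eqref{L^1-L^r} nor \eqref{energy} supplies it.

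What the paper does differently is to go through the weak form \eqref{w-IE} and invoke Yamazaki's estimate \eqref{yamazaki}, exactly as in the derivation of \eqref{w-est}; this is what rescues the borderline $(t-\tau)^{-1}$. The second, and decisive, simplification is to abandon the two-scale weight: one fixes a single exponent $r_3=r_3(\varepsilon)\in(n',\tfrac{n}{n-2})$ with $\tfrac n2(1-\tfrac1{r_3})+\tfrac12=1+\tfrac\varepsilon2$ and works with the \emph{single-power} weight $\gamma:=1-\tfrac\varepsilon2<1$. The moment bound \eqref{moment} gives $\|e^{(t-s)\Delta}f\|_{r_3,\infty}\le Cm_0(t-s)^{-\gamma}$ for all $t>s$ (both regimes at once), and since $\gamma<1$ the Yamazaki argument closes cleanly for $E(T)=\sup_{s<t\le T}(t-s)^\gamma\|u(t)\|_{r_3,\infty}$. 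Your two-scale weight would close only under the hidden constraint $\tfrac n2(1-\tfrac1r)<\tfrac12+\varepsilon$, which is equivalent to the paper's choice of $r_3$ but is not stated in your plan; without it the first half of the split integral does not produce the right power. The short-time range $t-s\le 1$ is then covered directly by \eqref{L^1-L^r}, and your final step (propagating from $r_3$ to all $r<r_0$ via \eqref{semigroup} and \eqref{L^q-L^r}) matches the paper.
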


\begin{proof}
Given $r_0\in (1,\infty)$, 
suppose $\|V\|\leq \delta_4(r_0)$.
We already know \eqref{moment-est}
for $t\in (s,s+1)$ by \eqref{L^1-L^r}.
Our task is thus to show \eqref{moment-est} for $t\geq s+1$.
Under the assumptions on $f$, we have \eqref{moment}, which implies
\[
\|e^{(t-s)\Delta}f\|_{r,\infty}
\leq Cm_0\, (t-s)^{-\frac{n}{2}(1-\frac{1}{r})}(1+t-s)^{-\frac{1}{2}}
\qquad (1<r<\infty),
\]
for $t\in (s,\infty)$, where 
$m_0:=\int (1+|y|)|f(y)|\,dy$.
Note that $\frac{n}{2}(1-\frac{1}{r})+\frac{1}{2}>1$ when $r>\frac{n}{n-1}$.
Given
$\varepsilon\in (0,1/2)$,
we take $r_3=r_3(\varepsilon) \in (\frac{n}{n-1},\frac{n}{n-2})$
such that $\frac{n}{2}(1-\frac{1}{r_3})+\frac{1}{2}=1+\frac{\varepsilon}{2}$.
For $T>s+1$ we set
\[
E(T):=\sup_{t\in (s,T]}(t-s)^\gamma \|T(t,s)f\|_{r_3,\infty}
\]
with
\[
\gamma:=1-\frac{\varepsilon}{2}
=\frac{n}{2}\left(1-\frac{1}{r_3}\right)+\frac{1}{2}-\varepsilon.
\]
Set $u(t)=T(t,s)f$, which is the solution to \eqref{IE} 
in Proposition \ref{asym-1}.
We follow the proof of \eqref{w-est},
in which $\|u\|_X$ is replaced by $E(T)$, and use
\[
\|e^{(t-s)\Delta}f\|_{r_3,\infty}\leq Cm_0(t-s)^{-\gamma}
\]
to obtain $E(T)\leq Cm_0$ for every $T>s+1$, that is,
\[
\|u(t)\|_{r_3,\infty}
\leq Cm_0\, (t-s)^{-\gamma}
\]
for all $t\in (s,\infty)$ provided 
$\|V\|\leq\delta_{17}$ for some constant 
$\delta_{17}=\delta_{17}(\varepsilon)>0$.
Following the proof of \eqref{weak-est},
in which we need the condition $r_3<\frac{n}{n-2}$ in the second step,
we obtain
\[
\|u(t)\|_{r,\infty}\leq
Cm_0\, (t-s)^{-\frac{n}{2}(1-\frac{1}{r})-\frac{1}{2}+\varepsilon}
\]
for every $r\in (1,r_3]$ and, thereby,
\begin{equation} 
\|u(t)\|_r
\leq Cm_0\, (t-s)^{-\frac{n}{2}(1-\frac{1}{r})-\frac{1}{2}+\varepsilon}
\label{mo}
\end{equation}
for all $t\in (s,\infty)$ and $r\in (1,r_3)$.
We fix $r_4\in (1,r_3)$ and assume
\[
\|V\|\leq \delta_{16}=\delta_{16}(r_0,\varepsilon)
:=\min\{\delta_4(r_0), \delta_{17}(\varepsilon), \delta_3(r_4,r_0)\},
\]
where $\delta_3$ is the constant in Theorem \ref{generate-1}.
We then use \eqref{semigroup} and combine
\eqref{mo} with $r=r_4$ with \eqref{L^q-L^r} to obtain
\begin{equation*} 
\begin{split} 
\|u(t)\|_r
&=\|T(t,(t+s)/2)u((t+s)/2)\|_r \\
&\leq C(t-s)^{-\frac{n}{2}(\frac{1}{r_4}-\frac{1}{r})}\|u((t+s)/2)\|_{r_4} \\
&\leq Cm_0\, (t-s)^{-\frac{n}{2}(1-\frac{1}{r})-\frac{1}{2}+\varepsilon}
\end{split}
\end{equation*}
for all $t\in (s,\infty)$ and $r\in [r_3,r_0)$ as well.
We thus obtain \eqref{mo} for every $r\in (1,r_0)$,
which leads us to the conclusion.
\end{proof}

Finally, in order to observe Remark \ref{from-heat},
we will show the following proposition, however, 
under the restriction $0<\alpha <1$.
\begin{proposition}
Let $n\geq 3$ and assume \eqref{basic}.
Suppose that $f\in L^2_\sigma$ satisfies
\[
\|e^{t\Delta}f\|_2=O(t^{-\alpha}) \qquad (t\to \infty)
\]
for some $\alpha \in (0,1)$.
There is a constant $\delta_{18}=\delta_{18}(\alpha)\in (0,\delta_{12}]$
such that if
$\|V\|\leq \delta_{18}$,
then
\[
\|T(t,s)f\|_2\leq C(1+t-s)^{-\alpha}, \qquad 0\leq s\leq t,
\]
with the same $\alpha$, 
where $\delta_{12}$ is the constant in Proposition \ref{identify}.
\label{heat-decay}
\end{proposition}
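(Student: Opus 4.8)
The plan is to exploit the integral equation \eqref{IE} with $f$ as the initial datum, together with the decay assumption on the heat flow $e^{t\Delta}f$, and to run a Fourier-splitting / bootstrap argument analogous to the one used for Proposition \ref{evo-moment}. First I would reduce to the case $s=0$ without loss of generality, since the perturbed Stokes system is autonomous-like in the sense that shifting the initial time only translates the data; more precisely, $T(t,s)f = \widetilde{T}(t-s,0)f$ for the evolution operator associated with the shifted basic flow $V(\cdot+s)$, which satisfies the same bound \eqref{basic}, and all constants below depend only on $\|V\|$ and $n$. So it suffices to prove $\|T(t,0)f\|_2 \leq C(1+t)^{-\alpha}$ under $\|e^{t\Delta}f\|_2 = O(t^{-\alpha})$.

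Next I would set $u(t)=T(t,0)f$, which by Proposition \ref{identify} (taking $\|V\|\le\delta_{12}$) coincides with the solution of \eqref{IE} constructed in Proposition \ref{asym-1}, and estimate the Duhamel term $v(t) = -\int_0^t e^{(t-\tau)\Delta}P\,\mathrm{div}\,(u\otimes V+V\otimes u)(\tau)\,d\tau$ in $L^2$. Using \eqref{div-vari2} with an appropriate intermediate exponent $p$ (chosen so that $1/p - 1/2 = 1/n$, i.e. $1/p = 1/2+1/n$, so that the time weight in the convolution kernel is exactly $(t-\tau)^{-1}$), the generalized Hölder inequality, and the already-established decay $\|u(\tau)\|_{r_1,\infty}\le 2C_0(1+\tau)^{-\alpha'}\|f\|_2$ for a small $\alpha'$ — actually here one wants $\|u(\tau)\|_2\le C(1+\tau)^{-\sigma}$ for whatever decay rate $\sigma<\alpha$ has been bootstrapped so far — one gets a Gronwall-type inequality. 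The cleanest route is a self-improving estimate: if $\|u(\tau)\|_2\le C(1+\tau)^{-\sigma}$ for some $0\le\sigma<\alpha$, then combining the heat-flow decay $\|e^{t\Delta}f\|_2\le C(1+t)^{-\alpha}$ with the Duhamel estimate yields $\|u(t)\|_2\le C(1+t)^{-\alpha} + C\|V\|\int_0^t(t-\tau)^{-1/2+\eta}(1+\tau)^{-\sigma-1/2}\,d\tau$ for a small $\eta>0$ (coming from splitting $L^2$ against $L^{p,\infty}$ and using a Lorentz–Hölder inequality as in \eqref{w-est}), and the integral is $O((1+t)^{-\min\{\sigma,\alpha-\eta\}})$ when $\sigma<1/2$, or $O((1+t)^{-1/2+\eta}\log)$ otherwise — in either case strictly better than rate $\sigma$ as long as $\sigma<\alpha<1$. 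Iterating finitely many times, and using the restriction $\alpha<1$ precisely to keep the integral $\int_0^t(t-\tau)^{-1/2}(1+\tau)^{-\alpha-1/2}d\tau$ convergent at $\tau=t$ (the exponent $-\alpha-1/2 > -3/2$) and decaying like $(1+t)^{-\alpha}$, one closes the bootstrap at $\sigma=\alpha$.

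There is a subtlety I would flag: the Duhamel estimate naturally lives in Lorentz spaces $L^{r,\infty}$ rather than $L^2$ because Yamazaki's estimate \eqref{yamazaki} and the borderline bound \eqref{div-vari2} force one to work in $L^{p,\infty}$ with $1/p=1/2+1/n$; one pays a small loss $\varepsilon$ or $\eta$ in the time exponent when passing back to $L^2$, but since the target rate $\alpha<1<n/4+1/2$ is subcritical, absorbing this loss is harmless. Concretely I would first bootstrap $\|u(t)\|_{r_1,\infty}\le Cm_f(1+t)^{-\alpha+\eta}$ in the Lorentz norm, with $m_f$ controlled by $\|f\|_2$ and the heat decay constant, then use the semigroup property \eqref{semigroup} and \eqref{L^q-L^r} on the interval $[t/2,t]$ to upgrade to the full $L^2$ rate $\|u(t)\|_2=\|T(t,t/2)u(t/2)\|_2\le C\|u(t/2)\|_2 \le \dots$, eventually producing $\|u(t)\|_2\le C(1+t)^{-\alpha}$. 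The main obstacle is organizing the bootstrap so that the $\varepsilon$-losses do not accumulate past $\alpha$: this is exactly why one needs $\alpha<1$, which guarantees that the Duhamel convolution $\int_0^t(t-\tau)^{-1/2}(1+\tau)^{-\alpha-1/2}\,d\tau \sim (1+t)^{-\alpha}$ reproduces the rate rather than degrading it, and it is the reason the proposition is stated under this restriction.
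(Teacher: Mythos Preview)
Your proposal has the right ingredients---the integral equation \eqref{IE}, Lorentz spaces, and Yamazaki's estimate---but the execution has a genuine gap. The bootstrap you sketch in the second paragraph does not self-improve: if you only know $\|u(\tau)\|_2\le C(1+\tau)^{-\sigma}$, the Duhamel convolution $\int_0^t(t-\tau)^{-1/2}(1+\tau)^{-\sigma-1/2}\,d\tau$ is of order $(1+t)^{-\sigma}$ when $\sigma<1/2$ and of order $(1+t)^{-1/2}$ when $\sigma>1/2$, so iterating gains nothing past $\sigma=1/2$ and never reaches $\alpha$ for $\alpha\in[1/2,1)$. (Incidentally, the source of the extra $(1+\tau)^{-1/2}$ is not spelled out; presumably it comes from $\|u(\tau)\|_{2_*}\le C\tau^{-1/2}\|u(\tau/2)\|_2$ via \eqref{L^q-L^r}, but this should be made explicit.) Your stated reason for the restriction $\alpha<1$---integrability of the convolution at $\tau=t$---is also off: the singularity $(t-\tau)^{-1/2}$ is always integrable there.

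The paper proceeds without any bootstrap. It first interpolates the hypothesis with the standard $L^2$--$L^r$ heat bound to obtain
\[
\|e^{t\Delta}f\|_{r,\infty}\le C\,t^{-\frac{n}{2}(\frac{1}{2}-\frac{1}{r})}(1+t)^{-\alpha}\qquad(r>2),
\]
then picks $r_5>2$ with $\frac{1}{2}-\frac{1}{r_5}<\frac{1}{n}\min\{2(1-\alpha),1\}$, so that $\mu:=\alpha+\frac{n}{2}(\frac{1}{2}-\frac{1}{r_5})\in(\alpha,1)$; this is precisely where $\alpha<1$ enters. One then runs the Yamazaki argument exactly as in the proof of \eqref{w-est} on the weighted norm $E(T)=\sup_{t\in(s,T]}(t-s)^\mu\|u(t)\|_{r_5,\infty}$, obtaining a closed inequality $E(T)\le C+c_*\|V\|E(T)$ that is absorbed for $\|V\|$ small. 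Finally, the choice of $r_5$ guarantees $p=(1/n+1/r_5)^{-1}<2$, so the step-down argument \eqref{argu} with \eqref{div-vari2} returns $\|u(t)\|_2\le C(t-s)^{-\alpha}$ directly. The point is that smallness of $\|V\|$ is used once, in an absorption, not in an iteration---and the interpolation of the heat decay into $L^{r_5,\infty}$ is the key input you are missing.
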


\begin{proof}
Since $\|T(t,s)f\|_2\leq \|f\|_2$,
we will show that
$\|T(t,s)f\|_2\leq C(t-s)^{-\alpha}$.
By the assumption and \eqref{usual} together with interpolation we have
\begin{equation}
\|e^{t\Delta}f\|_{r,\infty}
\leq Ct^{-\frac{n}{2}(\frac{1}{2}-\frac{1}{r})}(1+t)^{-\alpha}
\label{from-L^2}
\end{equation}
for $r\in (2,\infty)$.
Let $\|V\|\leq \delta_{12}$ and set
$u(t)=T(t,s)f$ for fixed $s\geq 0$,
then Proposition \ref{identify} ensures that $u(t)$ is the
solution to \eqref{IE} obtained in Proposition \ref{asym-1}.
We take $r_5=r_5(\alpha)$ such that
\begin{equation}
2<r_5<\infty, \qquad
\frac{1}{2}-\frac{1}{r_5}<\frac{1}{n}\min\{2(1-\alpha),\, 1\}.
\label{expo-r5}
\end{equation}
For $T\in (s,\infty)$ we set
\[
E(T):=\sup_{t\in (s,T]}(t-s)^\mu \|u(t)\|_{r_5,\infty}, \qquad
\mu=\alpha+\frac{n}{2}\left(\frac{1}{2}-\frac{1}{r_5}\right)\in (\alpha, 1).
\]
By following the proof of \eqref{w-est}
and using \eqref{from-L^2}, we are led to
$E(T)\leq C$, independent of $T$, provided
$\|V\|\leq \delta_{19}$
for some constant $\delta_{19}=\delta_{19}(\alpha)>0$.
We thus obtain
\[
\|u(t)\|_{r_5,\infty}\leq C(t-s)^{-\mu}
\]
for all $t\in (s,\infty)$.
Since we have
$p:=(1/n+1/r_5)^{-1}<2$ by \eqref{expo-r5},
we get
\[
\|u(t)\|_2\leq C(t-s)^{-\alpha}
\]
along the same way as in \eqref{argu}.
This concludes the assertion so long as
$\|V\|\leq \delta_{18}=\delta_{18}(\alpha)
:=\min\{\delta_{12},\delta_{19}(\alpha)\}$.
\end{proof}

\section{Energy decay of the Navier-Stokes flow}
\label{Navier-S}

In this section we establish  Theorem \ref{en-decay}.
For this an  integral equation for weak solutions is deduced
in terms of the evolution operator $T(t,s)$. 
Due to  regularity conditions of the basic flow $V$,
the restriction $n\leq 4$ will be needed.
See Remark \ref{reg-2}.
\begin{proposition}
Let $n=3,4$ and assume \eqref{basic}.
Suppose
\[
\|V\|\leq \delta_{20}:=
\min\{\delta_9, \delta_2(n^\prime)\},
\]
where $\delta_2$ and $\delta_9$ are the constants, respectively,
in Theorem \ref{linear-main} and Proposition \ref{reg},
and $1/n^\prime +1/n =1$.
Then every weak solution to \eqref{NS} satisfies
\begin{equation}
u(t)=T(t,s)u(s)-\int_s^t T(t,\tau)P(u\cdot\nabla u)(\tau)\, d\tau
\label{integ}
\end{equation}
in $L^2_\sigma$ for all $t\geq s\geq 0$.
\label{int-rep}
\end{proposition}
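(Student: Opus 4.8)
The plan is to derive the integral equation \eqref{integ} by testing the weak formulation \eqref{weak-NS} of the Navier--Stokes system against the solution of the backward perturbed Stokes system, i.e.\ against $S(t,\cdot)g$ for $g\in C_{0,\sigma}^\infty$, and then to use the duality relation $S(t,s)=T(t,s)^*$ from Proposition \ref{duality2} (together with its extension in Proposition \ref{dual}). Fix $t>s\geq 0$ and $g\in C_{0,\sigma}^\infty$, and set $\varphi(\tau):=S(t,\tau)g$ for $\tau\in[s,t]$. The first task is to check that $\varphi$ is an admissible test function in the sense of \eqref{test}. By Proposition \ref{reg} (which applies since $\|V\|\leq\delta_9$ and $g\in H^1_\sigma$) we have $\varphi\in L^\infty(s,t;H^1_\sigma)\cap L^2(s,t;H^2)$ with $\varphi'\in L^2(s,t;L^2_\sigma)$; in particular $\varphi\in C([s,t];L^2_\sigma)$, $\nabla\varphi\in L^2(s,t;L^2)$, and $\partial_\tau\varphi\in L^2(s,t;(H^1_\sigma)^*)$. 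The membership $\varphi\in L^\infty(s,t;L^{n,\infty})$ is where the restriction $n\leq 4$ enters: when $n\leq 4$ one has $H^2\hookrightarrow L^n$ (indeed $H^1\hookrightarrow L^{2_*}$ and $2_*\geq n$ for $n\leq 4$), so $\varphi\in L^\infty(s,t;L^n)\subset L^\infty(s,t;L^{n,\infty})$. This is precisely the point flagged in Remark \ref{reg-2}.

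Next I would insert $\varphi(\tau)=S(t,\tau)g$ into \eqref{weak-NS} on the interval $(s,t)$. Using the backward equation $\mbox{\eqref{backward-2}}_1$, namely $\partial_\tau S(t,\tau)g=M(\tau)S(t,\tau)g$ in $L^2_\sigma$ (valid in $L^2_\sigma$ by Proposition \ref{reg}), together with the definition \eqref{ope2}--\eqref{ope} of $M(\tau)$ as the adjoint form of $L(\tau)$, we get
\[
\int_s^t\langle u,\partial_\tau\varphi\rangle\,d\tau
=\int_s^t\langle u,M(\tau)S(t,\tau)g\rangle\,d\tau
=\int_s^t a(\tau;u,\varphi)\,d\tau.
\]
Substituting this into \eqref{weak-NS}, the two $\int_s^t a(\tau;u,\varphi)\,d\tau$ terms cancel and one is left with
\[
\langle u(t),g\rangle+\int_s^t\langle u\cdot\nabla u,S(t,\tau)g\rangle\,d\tau
=\langle u(s),S(t,s)g\rangle.
\]
Here $\langle u(t),\varphi(t)\rangle=\langle u(t),g\rangle$ since $\varphi(t)=S(t,t)g=g$. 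Writing $u\cdot\nabla u=\mbox{div}(u\otimes u)=P\,\mbox{div}(u\otimes u)=P(u\cdot\nabla u)$ (legitimate since $u$ is solenoidal, and the pairing against $S(t,\tau)g\in L^2_\sigma$ is unchanged by $P$), this reads
\[
\langle u(t),g\rangle
=\langle u(s),S(t,s)g\rangle-\int_s^t\langle P(u\cdot\nabla u)(\tau),S(t,\tau)g\rangle\,d\tau.
\]

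Finally I would convert this into \eqref{integ} by the duality $S(t,\tau)=T(t,\tau)^*$. For the first term on the right, Proposition \ref{duality2} gives $\langle u(s),S(t,s)g\rangle=\langle T(t,s)u(s),g\rangle$ directly since $u(s)\in L^2_\sigma$. For the integrand, one has to be slightly careful because $P(u\cdot\nabla u)(\tau)$ need not lie in $L^2_\sigma$; rather, by \eqref{class-NS} and the Sobolev embedding used in \eqref{HS}, $u(\tau)\otimes u(\tau)\in L^{n^\prime}$ for a.e.\ $\tau$ (with $u\in L^2$, $\nabla u\in L^2$ giving $u\in L^{2_*,2}\subset L^{n^\prime}$ in the relevant range), so $P(u\cdot\nabla u)(\tau)=P\,\mbox{div}(u\otimes u)(\tau)\in W^{-1,n^\prime}$, and one interprets $\langle P(u\cdot\nabla u)(\tau),S(t,\tau)g\rangle$ as $\langle(u\otimes u)(\tau),\nabla S(t,\tau)g\rangle$ using that $\nabla S(t,\tau)g\in L^n$ by the embedding above. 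The relation $\langle T(t,\tau)P(u\cdot\nabla u)(\tau),g\rangle=\langle(u\otimes u)(\tau),\nabla S(t,\tau)g\rangle$ then follows from Proposition \ref{dual} applied with $q=n^\prime$ (which requires $\|V\|\leq\delta_2(n^\prime)$, whence the constant $\delta_{20}$), combined with the $L^{n^\prime}$-boundedness of $T(t,\tau)P\,\mbox{div}$; passing the pairing through the $\tau$-integral (justified by the integrability estimate $\|(u\otimes u)(\tau)\|_{n^\prime}\|\nabla S(t,\tau)g\|_n\lesssim\|u(\tau)\|_2\|\nabla u(\tau)\|_2\,(t-\tau)^{-1/2}\|g\|$ type bound, with $\nabla u\in L^2_{loc}$ and $\|u\|_2$ bounded, cf.\ Proposition \ref{evo-est}) yields
\[
\langle u(t),g\rangle
=\Big\langle T(t,s)u(s)-\int_s^t T(t,\tau)P(u\cdot\nabla u)(\tau)\,d\tau,\ g\Big\rangle.
\]
Since $g\in C_{0,\sigma}^\infty$ is arbitrary and this set is dense in $L^2_\sigma$, and since all quantities involved lie in $L^2_\sigma$ (using $n\leq 4$ so that $T(t,\tau)P\,\mbox{div}$ maps the relevant $L^{n^\prime}$-datum into $L^2_\sigma$ via \eqref{L^q-L^r}, as exploited in Proposition \ref{dual}), we obtain \eqref{integ} in $L^2_\sigma$ for all $t\geq s\geq 0$, the case $t=s$ being trivial. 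The main obstacle I anticipate is the careful bookkeeping of function spaces for the nonlinear term: ensuring $P(u\cdot\nabla u)(\tau)$ is paired correctly so that both Proposition \ref{dual} and the boundedness of $T(t,\tau)P\,\mbox{div}$ apply, and checking the $\tau$-integrability needed to move the pairing inside the integral and to guarantee the integral converges in $L^2_\sigma$ — this is exactly where $n\leq 4$ is used, both here and already in verifying that $S(t,\cdot)g$ is an admissible test function.
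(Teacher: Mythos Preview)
Your approach is essentially identical to the paper's: test the weak formulation against $\varphi(\tau)=S(t,\tau)g$ for $g\in C_{0,\sigma}^\infty$ (admissible since $H^1\subset L^n$ when $n\leq 4$, via Proposition \ref{reg}), use the backward equation to cancel the bilinear-form terms, and then invoke the duality of Propositions \ref{duality2} and \ref{dual} (the latter with $q=n'$) to flip $S$ to $T$. Two small cleanups: the nonlinear term should be handled as $u\cdot\nabla u\in L^{n'}$ paired against $S(t,\tau)g\in L^n$ (not $u\otimes u\in L^{n'}$, which is false), and the Bochner integrability in $L^{n'}_\sigma$ follows simply from $\|T(t,\tau)P(u\cdot\nabla u)\|_{n'}\leq C\|\nabla u(\tau)\|_2^2\in L^1_{loc}$ rather than from a $(t-\tau)^{-1/2}$ bound; the paper then concludes the integral lies in $L^2_\sigma$ a posteriori because the other two terms of \eqref{integ} do.
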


\begin{proof}
We note that the integral term of \eqref{integ}
makes sense in $L^{n^\prime}_\sigma$ as the Bochner integral
by \eqref{L^q-L^r} with
$r=q=n^\prime$ and \eqref{class-NS} since
\[
\int_s^t\|T(t,\tau)P(u\cdot\nabla u)(\tau)\|_{n^\prime}\,d\tau
\leq C\int_s^t \|\nabla u(\tau)\|_2^2\,d\tau<\infty. 
\]
The conclusion tells us that this Bochner integral
belongs to $L^2_\sigma$, however,
does nor claim the Bochner
integrability in $L^2_\sigma$.
Let $\psi\in C_{0,\sigma}^\infty$ and fix $t>0$.
We take $\varphi(\cdot,\tau)=S(t,\tau)\psi=T(t,\tau)^*\psi$ in \eqref{weak-NS}.
This is actually possible on account of \eqref{class-adj2} as well as
\eqref{cl-adj} since
$H^1\subset L^n$ for $n\leq 4$.
Combining  \eqref{backward-2}  with \eqref{dual-0} yields
\[
a(\tau; u,S(t,\tau)\psi)
=\langle u, M(\tau)S(t,\tau)\psi\rangle
=\langle u, \partial_\tau S(t,\tau)\psi\rangle.
\]
This implies that
\[
\langle u(t), \psi\rangle
+\int_s^t \langle (u\cdot\nabla u)(\tau), S(t,\tau)\psi\rangle \,d\tau
=\langle u(s), S(t,s)\psi\rangle.
\]
We here recall the duality;
Proposition \ref{duality2} is enough in the RHS,
while we need the relation \eqref{duality3}
with $q=n^\prime$ in the integrand of the LHS.
As a consequence, we find
\[
\langle u(t), \psi\rangle
+\int_s^t \langle T(t,\tau)P(u\cdot\nabla u)(\tau), \psi\rangle \,d\tau
=\langle T(t,s)u(s), \psi\rangle
\]
for all 
$\psi\in C_{0,\sigma}^\infty\subset L^2_\sigma\cap L^n_\sigma$, 
which yields \eqref{integ} in
$L^2_\sigma +L^{n^\prime}_\sigma$;
however, we find that the integral term of the RHS of \eqref{integ}
belongs to $L^2_\sigma$
since the other two terms are in $L^2_\sigma$.
\end{proof}

Having \eqref{integ} at hand,
we can proceed to the final stage.
First of all, under the condition 
$\|V\|\leq\delta_1=\frac{1}{2c_0}$,
it follows from \eqref{SEI} and \eqref{control} that
\begin{equation}
\begin{split}
&\|u(t)\|_2^2+\int_s^t\|\nabla u\|_2^2d\tau
\leq \|u(s)\|_2^2 \\
&\mbox{for a.e. $s\geq 0$, including $s=0$, and all $t\geq s$}. 
\end{split}
\label{SEI2}
\end{equation}
The fundamental idea for the proof of the energy decay
is the Fourier splitting method due to Schonbek \cite{S}.
We split the energy into time-dependent low and high frequency regions:
\[
\|u(t)\|_2^2
=\int_{|\xi|<\rho(t)}|\widehat u(\xi,t)|^2d\xi
+\int_{|\xi|\geq\rho(t)}|\widehat u(\xi,t)|^2d\xi
\]
where 
$\widehat u$ denotes the Fourier transform in space variable and
$\rho(t)$ is a certain positive function.
We control the latter by dissipation as
\[
\int_{|\xi|\geq\rho(t)}|\widehat u(\xi,t)|^2d\xi
\leq\frac{1}{\rho(t)^2}\int_{|\xi|\geq\rho(t)}|\xi|^2|\widehat u(\xi,t)|^2d\xi
\leq\frac{1}{\rho(t)^2}\|\nabla u(t)\|_2^2
\]
so that
\begin{equation}
\rho(t)^2\|u(t)\|_2^2
\leq \rho(t)^2\int_{|\xi|<\rho(t)}|\widehat u(\xi,t)|^2d\xi
+\|\nabla u(t)\|_2^2.
\label{split}
\end{equation}

We note that  the linearized flow does satisfy \eqref{diff-en}, 
which is  inequality  (\ref{SEI2}) in differential form.
But the weak solution $u(t)$ would satisfy such an inequality, that is 
\[
\frac{d}{dt}\|u(t)\|_2^2\leq -\|\nabla u(t)\|_2^2 +\;\mbox{extra terms}.
\]
If the extra terms above would be zero then \eqref{split} with 
$\rho(t)^2=\frac{m}{1+t},\; m>0$, would imply
\begin{equation}
\begin{split}
\frac{d}{dt}[(1+t)^m\|u(t)\|_2^2]
&\leq (1+t)^m\left(\frac{m}{1+t}\|u(t)\|_2^2-\|\nabla u(t)\|_2^2\right) \\
&\leq m(1+t)^{m-1}\int_{|\xi|<\rho(t)}|\widehat u(\xi,t)|^2d\xi.
\end{split}
\label{dif-form}
\end{equation}
We now need  to work out two issues:\\
1. Estimate the right hand side of \eqref{dif-form},\\
2. Justify the energy method
since  we don't have 
the energy inequality in  the form  \eqref{diff-en}.

We start with a lemma on estimates of the nonlinear part
in the time-dependent low frequency region.
\begin{lemma}
Let $n\geq 3$ and assume \eqref{basic}.
Let $u(t)$ be any weak solution to \eqref{NS} with the energy inequality
\eqref{SEI} only for $s=0$.
Set
\begin{equation}
w(t)=-\int_0^t T(t,\tau)P\mbox{\emph{div} $(u\otimes u)$}(\tau) d\tau.
\label{repre-w}
\end{equation}
Then, for every $\theta\in (0,1)$, 
there is a  constant $\delta_{21}=\delta_{21}(\theta)\in (0,\delta_1]$ 
such that if $\|V\|\leq \delta_{21}$, then
\begin{equation}
\begin{split}
&\quad \int_{|\xi|<\sqrt{\frac{m}{1+t}}}|\widehat w(\xi,t)|^2 d\xi  \\
&\leq C_\theta\|u_0\|_2^{2\theta} \left(\frac{m}{1+t}\right)^{n/2-\theta}
\left(\int_0^t (t-\tau)^{-\frac{1}{2-\theta}}\|u(\tau)\|_2^2 d\tau\right)^{2-\theta}
\end{split}
\label{duhamel}
\end{equation}
for all $t\geq 0$ and $m>0$,
where $\delta_1$ is the constant in Theorem \ref{generate-1}.
\label{fourier}
\end{lemma}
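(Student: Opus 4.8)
The plan is to control $w(t)$ in a Lebesgue space $L^q$ with $q=q(\theta)$ slightly larger than $1$, using the decay estimate \eqref{evo-div} for the composite operator $T(t,s)P\mbox{div}$, and then to convert this into the desired bound on the low-frequency part of $\widehat w$ by combining the Hausdorff--Young inequality with H\"older's inequality in the Fourier variable. The unavoidable loss of the parameter $\theta$, and the appearance of the factor $\|u_0\|_2^{2\theta}$, both stem from the fact that only \eqref{evo-div} (valid for $q>1$) is available and not the $L^1$ bound \eqref{div-L^1}: working at the level $q>1$ forces one Gagliardo--Nirenberg step that produces an extra factor $\|\nabla u\|_2^{\theta}$, which is then absorbed by the dissipation.

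Concretely, I would fix $\theta\in(0,1)$, set $q:=\frac{n}{n-\theta}$, which lies in $(1,2)$ since $n\geq 3>2\theta$, and impose $\|V\|\le\delta_{21}(\theta):=\min\{\delta_1,\delta_{14}(q)\}$ so that \eqref{evo-div} applies with this $q$ and so that the energy inequality \eqref{SEI2} is at hand (it follows from \eqref{SEI} with $s=0$ together with \eqref{control}; in particular $\int_0^\infty\|\nabla u\|_2^2\,d\tau\le\|u_0\|_2^2$). Since $2\le 2q\le 2_*$ with $1/2_*=1/2-1/n$, interpolating $L^{2q}$ between $L^2$ and $L^{2_*}$ and using the Sobolev embedding yields, for a.e. $\tau$,
\[
\|(u\otimes u)(\tau)\|_q=\|u(\tau)\|_{2q}^2\le C\|u(\tau)\|_2^{2-\theta}\|\nabla u(\tau)\|_2^{\theta},
\]
where the exponent $\theta$ comes out of the identity $\frac n2\big(1-\frac1q\big)=\frac\theta2$. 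Inserting this into \eqref{evo-div} gives $\|T(t,\tau)P\mbox{div}(u\otimes u)(\tau)\|_q\le C(t-\tau)^{-1/2}\|u(\tau)\|_2^{2-\theta}\|\nabla u(\tau)\|_2^{\theta}$, which in particular shows that the integral defining $w(t)$ converges absolutely in $L^q_\sigma$.

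Next I would integrate this in $\tau$ over $(0,t)$ and apply H\"older's inequality with exponents $\frac2{2-\theta}$ and $\frac2\theta$ (the first pairing the singular kernel with $\|u\|_2$, the second isolating $\|\nabla u\|_2^\theta$), using $\int_0^t\|\nabla u\|_2^2\,d\tau\le\|u_0\|_2^2$, to obtain
\[
\|w(t)\|_q\le C\|u_0\|_2^{\theta}\Big(\int_0^t(t-\tau)^{-\frac1{2-\theta}}\|u(\tau)\|_2^2\,d\tau\Big)^{\frac{2-\theta}2},
\]
the exponent $\frac1{2-\theta}<1$ ensuring integrability near $\tau=t$. Finally, with $\rho=\sqrt{m/(1+t)}$, H\"older's inequality in $\xi$ applied to $\int_{|\xi|<\rho}|\widehat w(\xi,t)|^2\,d\xi$ with exponents $\frac{q'}2$ and $\frac{q'}{q'-2}$, followed by the Hausdorff--Young inequality $\|\widehat w(t)\|_{q'}\le C\|w(t)\|_q$ (legitimate since $1<q<2$), gives
\[
\int_{|\xi|<\rho}|\widehat w(\xi,t)|^2\,d\xi\le\big(C\rho^{\,n}\big)^{1-2/q'}\|w(t)\|_q^2\le C\rho^{\,n(2/q-1)}\|w(t)\|_q^2 .
\]
Since $q=\frac n{n-\theta}$ makes $n(2/q-1)=n-2\theta$, one has $\rho^{\,n-2\theta}=\big(m/(1+t)\big)^{n/2-\theta}$, and substituting the previous bound for $\|w(t)\|_q^2$ yields exactly \eqref{duhamel}. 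The only genuinely delicate point is the forced restriction $q>1$ (on account of the open problem \eqref{div-L^1}), which is precisely what produces the parameter $\theta$ and the extra $\|u_0\|_2^{2\theta}$; the remaining steps are a routine chain of H\"older, Hausdorff--Young and Gagliardo--Nirenberg inequalities.
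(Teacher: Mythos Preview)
Your proof is correct and follows essentially the same route as the paper: the choice $q=\frac{n}{n-\theta}$, the Gagliardo--Nirenberg bound $\|u\otimes u\|_q\le C\|u\|_2^{2-\theta}\|\nabla u\|_2^{\theta}$, the application of \eqref{evo-div} followed by H\"older in time with exponents $\tfrac{2}{2-\theta},\tfrac{2}{\theta}$ and the energy inequality \eqref{SEI2}, and finally H\"older plus Hausdorff--Young in Fourier space are exactly the steps the paper uses. Your exposition merely spells out the H\"older exponents and the interpolation a bit more explicitly than the paper does.
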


\begin{proof}
Given $\theta\in (0,1)$, let $q\in (1,n^\prime)$ such that
$\frac{1}{q}=1-\frac{\theta}{n}$, where
$1/n^\prime +1/n=1$.
Suppose 
$\|V\|\leq\delta_{21}(\theta):=\delta_{14}(q)\leq\delta_1$,
where $\delta_{14}$ is the constant in Proposition \ref{evo-est}.
By \eqref{evo-div} together with
$\|u\otimes u\|_q\leq C\|u\|_2^{2-\theta}\|\nabla u\|_2^\theta$ and 
\eqref{SEI2} for $s=0$
\begin{equation*}
\begin{split}
\|w(t)\|_q
&\leq C\int_0^t (t-\tau)^{-1/2}\|u\|_2^{2-\theta}\|\nabla u\|_2^\theta \,d\tau \\
&\leq C\|u_0\|_2^\theta
\left(\int_0^t (t-\tau)^{-\frac{1}{2-\theta}}\|u\|_2^2 \,d\tau\right)^{1-\theta/2}.
\end{split} 
\end{equation*}

This together with
\begin{equation*}
\begin{split} 
\int_{|\xi|< \sqrt{\frac{m}{1+t}}}|\widehat w(\xi,t)|^2 d\xi 
&\leq \left(\int_{|\xi|< \sqrt{\frac{m}{1+t}}} d\xi\right)^{1-2/q^\prime}
\|\widehat w(t)\|_{q^\prime}^2  \\ 
&\leq C\left(\frac{m}{1+t}\right)^{n(1/q-1/2)}\|w(t)\|_q^2
\end{split}
\end{equation*}
implies \eqref{duhamel}.
\end{proof}

\begin{remark}
If \eqref{evo-div} were available for $q=1$,
we could take $\theta=0$ in \eqref{duhamel},
which would imply the sharp rate \eqref{rate} with $\varepsilon =0$.
\label{sharp}
\end{remark}

The following lemma justifies the formal energy method explained above
when the weak solution
satisfies the strong energy inequality \eqref{SEI}.
\begin{lemma}
Let $n\geq 3$ and assume \eqref{basic}.
Suppose $\|V\|\leq \delta_1$, where $\delta_1$ is the constant
in Theorem \ref{generate-1}.
Let $u$ be any weak solution to \eqref{NS} satisfying  \eqref{SEI}.
Assume that there is a function
$g\in L^1_{loc}([0,\infty))$ with
$g(t)\geq 0$ and a constant $m>0$ such that
\begin{equation}
\frac{m}{1+t}\|u(t)\|_2^2
\leq g(t) + \|\nabla u(t)\|_2^2 
\label{energy-1} 
\end{equation}
for a.e. $t>0$.
Then we have
\begin{equation} 
(1+t)^m\|u(t)\|_2^2
\leq \|u_0\|_2^2+\int_0^t (1+\tau)^mg(\tau)d\tau
\label{weight}
\end{equation}
for all $t>0$.
\label{justifi}
\end{lemma}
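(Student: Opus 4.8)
\textbf{Proof proposal (Lemma \ref{justifi}).}
The plan is to convert the \emph{integrated} strong energy inequality into the weighted bound \eqref{weight} by an integration-by-parts device that never differentiates $\|u(t)\|_2^2$; this is essential because, for a weak solution in the class \eqref{class-NS}, $t\mapsto\|u(t)\|_2^2$ is only known to be measurable and locally bounded (from $u\in C_w([0,\infty);L^2_\sigma)\cap L^\infty_{loc}([0,\infty);L^2_\sigma)$), not absolutely continuous. First I would recall that under $\|V\|\le\delta_1$ the clean inequality \eqref{SEI2} holds, as observed right after Proposition \ref{int-rep}. Write $E(t):=\|u(t)\|_2^2$ and $\phi(t):=(1+t)^m$, so $\phi'(t)=\frac{m}{1+t}\phi(t)$, and let $\Sigma\subset[0,\infty)$ be the full-measure set of admissible initial times in \eqref{SEI2}, which contains $0$. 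Bounding $\|\nabla u(\tau)\|_2^2$ below by $\frac{m}{1+\tau}E(\tau)-g(\tau)$ inside the dissipation integral of \eqref{SEI2} — legitimate since \eqref{energy-1} holds for a.e.\ $\tau$ — gives, for every $s\in\Sigma$ and every $t\ge s$,
\[
E(t)+\int_s^t\frac{m}{1+\tau}E(\tau)\,d\tau\le E(s)+\int_s^t g(\tau)\,d\tau. \qquad(\star)
\]

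Next I would fix $t>0$, apply $(\star)$ with $s$ replaced by an arbitrary $\sigma\in\Sigma\cap[0,t]$ (hence for a.e.\ $\sigma\in[0,t]$), multiply that inequality by the nonnegative weight $\phi'(\sigma)$, and integrate over $\sigma\in[0,t]$. All integrands are nonnegative or integrable on $[0,t]$ (here $g\in L^1_{loc}$, $E$ bounded on $[0,t]$, $\frac{m}{1+\tau}$ bounded on $[0,t]$), so this is valid and Fubini applies. Swapping the order of integration over the triangle $\{0\le\sigma\le\tau\le t\}$ replaces $\int_0^t\phi'(\sigma)\big(\int_\sigma^t h(\tau)\,d\tau\big)\,d\sigma$ by $\int_0^t h(\tau)\,[\phi(\tau)-\phi(0)]\,d\tau$, applied with $h(\tau)=\frac{m}{1+\tau}E(\tau)$ and with $h(\tau)=g(\tau)$. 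Since $\frac{m}{1+\tau}\phi(\tau)=\phi'(\tau)$, the term $\int_0^t\phi'(\tau)E(\tau)\,d\tau$ occurs on both sides and cancels, leaving
\[
\phi(t)E(t)\le\phi(0)\Big[E(t)+\int_0^t\tfrac{m}{1+\tau}E(\tau)\,d\tau-\int_0^t g(\tau)\,d\tau\Big]+\int_0^t\phi(\tau)g(\tau)\,d\tau.
\]
Finally I would insert $(\star)$ with $s=0\in\Sigma$ into the bracket, which bounds it by $E(0)=\|u_0\|_2^2$, and conclude \eqref{weight} using $\phi(0)=1$. (The same computation with $0$ replaced by any $s\in\Sigma$ gives the sharper $\phi(t)E(t)\le\phi(s)E(s)+\int_s^t\phi(\tau)g(\tau)\,d\tau$, but only $s=0$ is needed.)

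There is no real obstacle here, only bookkeeping: one checks that $E=\|u(\cdot)\|_2^2$ is measurable and bounded on each $[0,t]$, that the Fubini swaps are justified (integrands in $L^1$ of the square), and that multiplying the a.e.-valid inequality $(\star)$ by the nonnegative continuous weight $\phi'$ and integrating preserves the inequality. What makes the argument go through is precisely that it uses only the integrated form of the energy inequality, so it applies verbatim to any weak solution satisfying \eqref{SEI}.
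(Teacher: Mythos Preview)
Your argument is correct and is essentially the same as the paper's proof, which follows Borchers--Miyakawa \cite{BM}: both derive the intermediate inequality \eqref{SEI3} (your $(\star)$), multiply by the weight derivative $m(1+s)^{m-1}$, integrate over $[0,t]$, and close with $(\star)$ at $s=0$. The only cosmetic difference is that the paper packages the inner integral as $h(s)=\int_s^t\frac{m}{1+\tau}\|u(\tau)\|_2^2\,d\tau$ and differentiates $(1+s)^m h(s)$, whereas you achieve the same cancellation via Fubini on the triangle $\{0\le\sigma\le\tau\le t\}$.
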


\begin{proof}
We follow the argument of Borchers and Miyakawa \cite{BM}.
By \eqref{SEI2} and \eqref{energy-1} we get
\begin{equation} 
\|u(t)\|_2^2+\int_s^t\frac{m}{1+\tau}\|u(\tau)\|_2^2 \,d\tau
\leq \|u(s)\|_2^2+\int_s^t g(\tau)d\tau.
\label{SEI3} 
\end{equation}
We fix $t>0$ and consider
\[
h(s):=\int_s^t\frac{m}{1+\tau}\|u(\tau)\|_2^2 \,d\tau, \qquad s\in [0,t].
\]
Then \eqref{SEI3} implies that
\begin{equation} 
\begin{split} 
\frac{d}{ds}\{(1+s)^mh(s)\}
&=m(1+s)^{m-1}\{-\|u(s)\|_2^2+h(s)\}  \\ 
&\leq m(1+s)^{m-1}\big\{-\|u(t)\|_2^2+\int_s^tg(\tau)d\tau\big\} 
\end{split}
\end{equation}
for a.e. $s\in [0,t]$.
Integrating from $0$ to $t$, we find
\[
-h(0)\leq
-\{(1+t)^m-1\}\|u(t)\|_2^2+\int_0^t\{(1+s)^m\}^\prime\int_s^t g(\tau)d\tau ds.
\]
An integration by parts gives
\[ 
(1+t)^m\|u(t)\|_2^2 
\leq\|u(t)\|_2^2+h(0)-\int_0^tg(\tau)d\tau  
+\int_0^t (1+s)^m g(s)ds
\]
which together with \eqref{SEI3} for $s=0$ yields \eqref{weight}.
\end{proof}

The proof of \eqref{stability} and \eqref{rate}
will be accomplished by iteration with use of \eqref{ite} below.
Lemmas \ref{fourier} and  \ref{justifi} 
yield the following proposition.
\begin{proposition}
Let $n=3, 4$ and assume \eqref{basic}.
Let $\theta\in (0,1)$ and suppose 
$\|V\|\leq\delta_{22}=\delta_{22}(\theta):=\min\{\delta_{20},\delta_{21}(\theta)\}$,
where $\delta_{20}$ and $\delta_{21}$ 
are the constants in Proposition \ref{int-rep}
and Lemma \ref{fourier}, respectively.
Let $u(t)$ be any weak solution to \eqref{NS} satisfying  \eqref{SEI}.
Then
\begin{equation}
\begin{split}
&\quad (1+t)^m\|u(t)\|_2^2 \\ 
&\leq \|u_0\|_2^2
+2m\int_0^t (1+\tau)^{m-1}\|T(\tau, 0)u_0\|_2^2 \,d\tau
+C_\theta\, m^{1+\frac{n}{2}-\theta}\|u_0\|_2^{2\theta} J(t)
\end{split}
\label{ite}
\end{equation}
for all $t>0$ and $m>0$ with

\begin{equation}
J(t)=\int_0^t (1+\tau)^{m-1-\frac{n}{2}+\theta}
\left(\int_0^\tau (\tau-s)^{-\frac{1}{2-\theta}}\|u(s)\|_2^2\,ds\right)^{2-\theta}d\tau.
\label{J}
\end{equation}
\label{itera}
\end{proposition}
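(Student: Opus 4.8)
The plan is to insert the integral representation \eqref{integ} into the Fourier splitting inequality \eqref{split} and then close the estimate with Lemma \ref{justifi}.

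First I would fix $\theta\in(0,1)$ and impose $\|V\|\leq\delta_{22}(\theta):=\min\{\delta_{20},\delta_{21}(\theta)\}$; since $\delta_{22}\leq\delta_1$, the identity \eqref{integ}, Lemma \ref{fourier} and the energy inequality \eqref{SEI2} (which needs $\|V\|\leq\delta_1$) are all available, while the hypothesis $u$ satisfies \eqref{SEI} supplies what Lemma \ref{justifi} requires. Writing \eqref{integ} with $s=0$ and using $\mbox{div $u$}=0$ to replace $u\cdot\nabla u$ by $\mbox{div $(u\otimes u)$}$ gives $u(t)=T(t,0)u_0+w(t)$ with $w(t)$ the function in \eqref{repre-w}. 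Passing to Fourier transforms, applying $|a+b|^2\leq 2|a|^2+2|b|^2$ and Plancherel, for any $\rho(t)>0$ one gets
\[
\int_{|\xi|<\rho(t)}|\widehat u(\xi,t)|^2\,d\xi
\leq 2\,\|T(t,0)u_0\|_2^2
+2\int_{|\xi|<\rho(t)}|\widehat w(\xi,t)|^2\,d\xi .
\]

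Next I would choose $\rho(t)^2=\frac{m}{1+t}$ in \eqref{split}, substitute the bound above, and estimate the $w$-part by Lemma \ref{fourier}. This yields $\frac{m}{1+t}\|u(t)\|_2^2\leq g(t)+\|\nabla u(t)\|_2^2$ for a.e.\ $t>0$, with
\[
g(t):=\frac{2m}{1+t}\,\|T(t,0)u_0\|_2^2
+C_\theta\,m^{1+\frac n2-\theta}\|u_0\|_2^{2\theta}\,(1+t)^{-1-\frac n2+\theta}
\Big(\int_0^t (t-\tau)^{-\frac{1}{2-\theta}}\|u(\tau)\|_2^2\,d\tau\Big)^{2-\theta}.
\]
One must check $g\geq 0$ (immediate) and $g\in L^1_{loc}([0,\infty))$: the first term is $\leq 2m\|u_0\|_2^2/(1+t)$ since $\|T(t,0)u_0\|_2\leq\|u_0\|_2$, while in the second term $\tfrac{1}{2-\theta}\in(\tfrac12,1)$ makes $(t-\tau)^{-1/(2-\theta)}$ integrable, so, as $\|u\|_2\in L^\infty_{loc}$ by \eqref{class-NS} and $n/2-\theta>0$ for $n=3,4$ and $\theta\in(0,1)$, the whole term is locally bounded. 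Hence Lemma \ref{justifi} applies and gives $(1+t)^m\|u(t)\|_2^2\leq\|u_0\|_2^2+\int_0^t(1+\tau)^m g(\tau)\,d\tau$.

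Finally I would simply expand $\int_0^t(1+\tau)^m g(\tau)\,d\tau$: the first piece is $2m\int_0^t(1+\tau)^{m-1}\|T(\tau,0)u_0\|_2^2\,d\tau$, and the second piece, after cancelling the powers of $(1+\tau)$, is $C_\theta\,m^{1+\frac n2-\theta}\|u_0\|_2^{2\theta}\int_0^t(1+\tau)^{m-1-\frac n2+\theta}\big(\int_0^\tau(\tau-s)^{-1/(2-\theta)}\|u(s)\|_2^2\,ds\big)^{2-\theta}\,d\tau=C_\theta\,m^{1+\frac n2-\theta}\|u_0\|_2^{2\theta}J(t)$, which is exactly \eqref{ite}. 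Since the genuine analytic content has already been packaged into Proposition \ref{int-rep}, Proposition \ref{evo-est} and Lemmas \ref{fourier} and \ref{justifi}, the assembly is essentially mechanical; the one place I would be careful, and the mild main obstacle, is confirming that the forcing term $g$ lies in $L^1_{loc}$ so that Lemma \ref{justifi} is legitimately applicable.
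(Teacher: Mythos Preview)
Your proposal is correct and follows essentially the same route as the paper: choose $\rho(t)^2=\frac{m}{1+t}$ in \eqref{split}, use the representation $u(t)=T(t,0)u_0+w(t)$ from \eqref{integ}, bound the $w$-contribution via Lemma \ref{fourier}, and close with Lemma \ref{justifi}. The only cosmetic difference is that you split $|\widehat u|^2\leq 2|\widehat{T(t,0)u_0}|^2+2|\widehat w|^2$ \emph{before} defining $g$ and invoking Lemma \ref{justifi}, whereas the paper takes $g(t)=\frac{m}{1+t}\int_{|\xi|<\rho(t)}|\widehat u|^2\,d\xi$ first and performs this splitting afterwards inside $\int_0^t(1+\tau)^m g(\tau)\,d\tau$; your extra care in checking $g\in L^1_{loc}$ is a welcome addition that the paper leaves implicit.
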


\begin{proof}
Estimate \eqref{split} with $\rho(t)^2=\frac{m}{1+t}$ 
tells us that \eqref{energy-1} holds with
\[ 
g(t)=
\frac{m}{1+t}\int_{|\xi|<\sqrt{\frac{m}{1+t}}}|\widehat u(\xi,t)|^2 d\xi.
\]
By virtue of the integral equation \eqref{integ} with $s=0$,
Lemma \ref{justifi} together Lemma \ref{fourier} implies \eqref{ite}.
\end{proof}

To find the asymptotic behavior \eqref{asym1} and \eqref{asym2},  
following  arguments of Borchers and Miyakawa in \cite{BM},
we derive a strong energy inequality for $u(t)-T(t,0)u_0$. 

\begin{remark}
The case $s=0$ is not covered in the following Lemma, since  the last integral 
in \eqref{SEI4} below is not summable in time up to $\tau=0$.
\end{remark}
\begin{lemma}
Let $n\geq 3$ and assume \eqref{basic}.
Let $u$ be any weak solution to \eqref{NS}  satisfying \eqref{SEI}.
Set $v(t)=T(t,0)u_0$ and $w(t)=u(t)-v(t)$.
There is a constant $\delta_{23}\in (0,\delta_1]$ such that
if $\|V\|\leq \delta_{23}$, then
\begin{equation} 
\|w(t)\|_2^2+\int_s^t\|\nabla w\|_2^2 \,d\tau 
\leq \|w(s)\|_2^2+2\int_s^t\langle u\cdot\nabla u, v \rangle \,d\tau
\label{SEI4}
\end{equation}
for a.e. $s>0$ and all $t\geq s$.
\label{en-w}
\end{lemma}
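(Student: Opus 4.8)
The plan is to follow the pattern of Borchers and Miyakawa \cite{BM}: superpose three relations --- the strong energy inequality \eqref{SEI} for $u$, the energy \emph{equality} for $v(t)=T(t,0)u_0$, and an identity for the cross term $\tau\mapsto\langle u(\tau),v(\tau)\rangle$ --- and then absorb the perturbative part into the dissipation. Heuristically $w$ solves $\partial_\tau w+L(\tau)w=-P(u\cdot\nabla u)$, and since $\langle u\cdot\nabla u,w\rangle=\langle u\cdot\nabla u,u\rangle-\langle u\cdot\nabla u,v\rangle=-\langle u\cdot\nabla u,v\rangle$ for $u(\tau)\in H^1_\sigma$, testing with $w$ gives formally
\[
\tfrac12\tfrac{d}{d\tau}\|w(\tau)\|_2^2=-\|\nabla w(\tau)\|_2^2+\langle V(\tau)\otimes w,\nabla w\rangle+\langle u\cdot\nabla u,v\rangle,
\]
and \eqref{SEI4} follows on integration together with $|\langle V\otimes w,\nabla w\rangle|\le\tfrac12\|\nabla w\|_2^2$, valid when $\|V\|\le\delta_1$ by \eqref{control}. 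The work consists in justifying this for a weak solution that only obeys \eqref{SEI}.

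First I would record the energy equality for $v$: applied with initial time $0$ and datum $u_0$, the chain of identities in \eqref{diff-en} (that is, \emph{before} the last inequality there is invoked) yields
\[
\|v(t)\|_2^2+2\int_s^t\|\nabla v\|_2^2\,d\tau=\|v(s)\|_2^2+2\int_s^t\langle V(\tau)\otimes v,\nabla v\rangle\,d\tau
\]
for all $0\le s\le t$, using that $\|v(\tau)\|_2^2$ is absolutely continuous by Theorem \ref{generate-1}. Second --- and this is where care is needed --- I would establish the cross-term identity
\[
\langle u(t),v(t)\rangle=\langle u(s),v(s)\rangle-\int_s^t\big[a(\tau;u,v)+a(\tau;v,u)+\langle u\cdot\nabla u,v\rangle\big]\,d\tau
\]
by inserting $\varphi=v$ into the weak formulation \eqref{weak-NS} and using $\partial_\tau v(\tau)=-L(\tau)v(\tau)$ in $(H^1_\sigma)^*$ (i.e.\ \eqref{forward} with $s=0$) together with $\langle u(\tau),L(\tau)v(\tau)\rangle=a(\tau;v(\tau),u(\tau))$, which is legitimate since $u(\tau)\in H^1_\sigma$ for a.e.\ $\tau$. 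The obstruction is that the test-function class \eqref{test} demands membership in $L^\infty_{loc}([0,\infty);L^{n,\infty})$, whereas close to $\tau=0$ only $v(\tau)\in L^2_\sigma$ is available; this is precisely why the case $s=0$ is excluded. I would get around it by taking $\|V\|$ small enough that \eqref{L^q-L^r} applies with $q=2$ and some fixed $r_0>n$, so that $v(\tau)\in L^n\subset L^{n,\infty}$ for all $\tau>0$ with $\|v(\tau)\|_n\le C\tau^{-(n-2)/4}\|u_0\|_2$ (a bound degenerating as $\tau\to0$), and then using as test function the field equal to $v$ on $[s,t]$ and to the constant $v(s)\in L^{n,\infty}$ on $[0,s]$, which lies in \eqref{test} for a.e.\ $s>0$; on $[s,t]$ this field coincides with $v$, so \eqref{weak-NS} reduces to the stated identity. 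The same estimate on $\|v(\tau)\|_n$ shows $|\langle u\cdot\nabla u,v\rangle|\le C\|\nabla u(\tau)\|_2^2\,\tau^{-(n-2)/4}\|u_0\|_2\in L^1(s,t)$ for $s>0$, and the absolute continuity of $\tau\mapsto\langle u(\tau),v(\tau)\rangle$ needed to run the fundamental theorem of calculus is handled as in \cite{BM}.

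It then remains to add the three relations and simplify. The $V\cdot\nabla$ parts of $a(\tau;u,v)+a(\tau;v,u)$ cancel by integration by parts (using $\mathrm{div}\,V=0$), leaving $a(\tau;u,v)+a(\tau;v,u)=2\langle\nabla u,\nabla v\rangle-\langle V\otimes u,\nabla v\rangle-\langle V\otimes v,\nabla u\rangle$; combining this with the dissipation terms of $u$ and $v$ produces $-2\int_s^t\|\nabla w\|_2^2$, while the remaining six $V\otimes(\cdot)$ terms collapse into $2\int_s^t\langle V\otimes w,\nabla w\rangle$. One arrives at
\[
\|w(t)\|_2^2\le\|w(s)\|_2^2-2\int_s^t\|\nabla w\|_2^2\,d\tau+2\int_s^t\langle V\otimes w,\nabla w\rangle\,d\tau+2\int_s^t\langle u\cdot\nabla u,v\rangle\,d\tau,
\]
and bounding $2|\langle V\otimes w,\nabla w\rangle|\le\|\nabla w\|_2^2$ (again $\|V\|\le\delta_1$) gives \eqref{SEI4}; the constant can be taken as $\delta_{23}=\min\{\delta_1,\delta_3(2,r_0)\}$ for any fixed $r_0>n$. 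The main obstacle throughout is the rigorous justification of the cross-term identity: the limited regularity of a general weak solution forces both the truncation of the test function near $\tau=0$ (hence the exclusion of $s=0$) and the a priori non-obvious absolute continuity of $\langle u,v\rangle$; once that step is secured, the remainder is bookkeeping plus the smallness of $\|V\|$.
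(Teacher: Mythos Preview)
Your proposal is correct and follows essentially the same route as the paper: energy equality for $v$ from \eqref{diff-en}, the strong energy inequality \eqref{SEI} for $u$, the cross-term identity obtained by testing \eqref{weak-NS} with $\varphi=v$ and invoking $\partial_\tau v=-L(\tau)v$, then the algebraic reduction via $\langle V\cdot\nabla u,v\rangle+\langle V\cdot\nabla v,u\rangle=0$ and absorption of $\langle V\otimes w,\nabla w\rangle$ using \eqref{control}. The only technical variation is in legitimising $v$ as a test function near $\tau=0$: the paper shifts time, applying \eqref{weak-NS} to the weak solution $u(\cdot+\varepsilon)$ over $[s-\varepsilon,t-\varepsilon]$ with test function $v(\cdot+\varepsilon)$, whereas you freeze $v$ at the value $v(s)$ on $[0,s]$; both devices work and lead to the same choice $\delta_{23}=\delta_3(2,r_0)$ for a fixed $r_0>n$.
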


\begin{proof}
By \eqref{diff-en} we have the energy equality
\[
\|v(t)\|_2^2+2\int_s^t\|\nabla v\|_2^2d\tau
=\|v(s)\|_2^2 
+2\int_s^t\langle V\otimes v, \nabla v \rangle d\tau 
\]
which together with \eqref{SEI} implies
\begin{equation}
\begin{split}
&\quad \|w(t)\|_2^2+2\int_s^t\|\nabla w\|_2^2d\tau-\|w(s)\|_2^2 \\ 
&\leq 2\int_s^t
\left[\langle V\otimes u, \nabla u\rangle +\langle V\otimes v, \nabla v\rangle\right]
d\tau \\ 
&\quad -2\langle u(t), v(t)\rangle +2\langle u(s), v(s)\rangle
-4\int_s^t\langle\nabla u, \nabla v\rangle d\tau
\end{split}
\label{en-w2}
\end{equation}
for a.e. $s\geq 0$, including $s=0$, and all $t\geq s$.
Let $s>0$.
We fix $r_0\in (n,\infty)$ and
assume that
$\|V\|\leq \delta_{23}:=\delta_3(2,r_0)\leq \delta_1$,
where $\delta_3$ is the constant in Theorem \ref{linear-main}.
Since 
by \eqref{L^q-L^r} $\|v(\tau)\|_n\leq C\tau^{-n/4+1/2}\|u_0\|_2$ it follows that
$v\in L^\infty(s,T; L^n)$, 
hence by
\[
|\langle u\cdot\nabla u, v\rangle|\leq C\|\nabla u\|_2^2 \|v\|_n
\]
it follows that
$\int_s^t\langle u\cdot\nabla u, v\rangle d\tau$ converges.
Thus  $v$ can be taken as a test function in the definition 
\eqref{weak-NS} (with $s>0$) of the weak solution $u$.
This is verified from \eqref{weak-NS} for
$u(\cdot+\varepsilon)$ over $[s-\varepsilon, t-\varepsilon]$
with test function $v(\cdot+\varepsilon)$,
where $t\geq s\geq \varepsilon>0$,
since $u(\cdot+\varepsilon)$ is also a weak solution.
We thus have
\begin{equation*}
\begin{split} 
\langle u(t), v(t)\rangle
+\int_s^t [a(\tau; u,v)+\langle u\cdot\nabla u, v\rangle] d\tau
&=\langle u(s), v(s)\rangle 
+\int_s^t \langle u,\partial_\tau v\rangle d\tau \\
&=\langle u(s), v(s)\rangle
-\int_s^t a(\tau; v,u)\,d\tau
\end{split}
\end{equation*}
for all $t\geq s>0$.
Equalities $\mbox{\eqref{forward}}_1$ and \eqref{ope} have been used in the last line.
Since
$\langle V\cdot\nabla u, v\rangle + \langle V\cdot\nabla v, u\rangle =0$,
it follows by the definition \eqref{bilinear} 
of the bilinear form $a(t,\cdot,\cdot)$ that
\[
a(\tau; u,v)+a(\tau; v,u)
=2\langle\nabla u,\nabla v\rangle
-\langle V\otimes u, \nabla v\rangle
-\langle V\otimes v, \nabla u\rangle.
\]
Therefore, we obtain from \eqref{en-w2} that
\[
\|w(t)\|_2^2+2\int_s^t\|\nabla w\|_2^2d\tau-\|w(s)\|_2^2 
= 2\int_s^t [\langle u\cdot\nabla u, v\rangle 
+\langle V\otimes w, \nabla w \rangle] \,d\tau.
\]
Consequently, \eqref{control} implies \eqref{SEI4}.
\end{proof}

We note that the $L^\infty$ estimate 
of $T(t,s)$ is not available in \eqref{L^q-L^r}.
Hence we need a new  estimate of the last term of \eqref{SEI4}.
We will use the following bound with arbitrarily small $\theta$.

\begin{lemma} 
Suppose \eqref{top} holds for some $\alpha\geq 0$ and let  $\theta\in (0,1)$.
Under the assumptions of Lemma \ref{en-w},
there is a constant
$\delta_{24}=\delta_{24}(\theta)\in (0,\delta_{23}]$
independent of $\alpha\geq 0$
such that if $\|V\|\leq\delta_{24}$, then
\[
|\langle u\cdot\nabla u, v\rangle| 
\leq \frac{1}{4}\|\nabla w(t)\|_2^2
+C_\theta\, t^{-n/2+\theta}(1+t)^{-2\alpha}
\|u(t)\|_2^{2(1-\theta)}\|\nabla u(t)\|_2^{2\theta}
\]
for a.e. $t>0$.
\label{last}
\end{lemma}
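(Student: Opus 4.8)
The plan is to use a cancellation to trade one factor $\nabla u$ for $\nabla w$ in the trilinear term, and then to interpolate so that a single $L^q$-norm of $v=T(t,0)u_0$ absorbs simultaneously the scaling gain $t^{-n/2+\theta}$ and the decay $(1+t)^{-2\alpha}$. First I would observe that, since $\mathrm{div}\,u=0$ and, for $n\le 4$, $u(t)\in H^1\subset L^n$ and $v(t)\in H^1$ for a.e.\ $t$, the integration by parts
\[
\langle u\cdot\nabla v,v\rangle=\frac12\int_{\mathbb R^n}u\cdot\nabla|v|^2\,dx
=-\frac12\int_{\mathbb R^n}(\mathrm{div}\,u)\,|v|^2\,dx=0
\]
is legitimate (approximate $u$ in $C^\infty_{0,\sigma}$; for $n\le 4$ the products $u|v|^2$ and $u\,v\,\nabla v$ are integrable). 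Writing $u=w+v$ this yields $\langle u\cdot\nabla u,v\rangle=\langle u\cdot\nabla w,v\rangle$, which is the crucial point: it lets the dissipative quantity $\|\nabla w\|_2^2$, rather than $\|\nabla v\|_2^2$ (which has no pointwise-in-$t$ decay), appear on the right-hand side.

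Next I would apply Hölder's inequality with exponents $2,p,q$ satisfying $1/p+1/q=1/2$,
\[
|\langle u\cdot\nabla w,v\rangle|\le\|\nabla w\|_2\,\|u\|_p\,\|v\|_q,
\]
and choose $p=\tfrac{2n}{n-2\theta}$, so that $2<p<\tfrac{2n}{n-2}$ and the Gagliardo--Nirenberg--Sobolev inequality gives $\|u\|_p\le C\|u\|_2^{1-\theta}\|\nabla u\|_2^{\theta}$; the conjugate exponent is then $q=n/\theta$. For $\|v(t)\|_q$ I would combine the smoothing estimate \eqref{L^q-L^r} (from $L^2$ to $L^{n/\theta}$), the semigroup law \eqref{semi}, and the hypothesis \eqref{top}: for $t\ge 1$, writing $v(t)=T(t,t/2)v(t/2)$ gives
\[
\|v(t)\|_q\le C\,(t/2)^{-\frac n2(\frac12-\frac1q)}\|v(t/2)\|_2
\le C\,t^{-(\frac n4-\frac\theta2)}(1+t)^{-\alpha},
\]
while for $0<t<1$ the direct bound $\|v(t)\|_q\le C\,t^{-(\frac n4-\frac\theta2)}$ from \eqref{L^q-L^r} suffices; squaring, $\|v(t)\|_q^2\le C\,t^{-(\frac n2-\theta)}(1+t)^{-2\alpha}$. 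Multiplying and invoking Young's inequality $ab\le\frac14a^2+b^2$ with $a=\|\nabla w\|_2$, $b=\|u\|_p\|v\|_q$ gives exactly the claimed estimate.

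The smallness threshold is then $\delta_{24}(\theta)=\min\{\delta_{23},\,\delta_3(2,r_0(\theta))\}$ with $r_0(\theta)>n/\theta$ chosen so that \eqref{L^q-L^r} is valid up to the exponent $n/\theta$ (the first term ensures Lemma \ref{en-w} applies and $v$ has its basic properties). Since $\alpha$ enters only through \eqref{top}, which is used as a black box, $\delta_{24}$ is indeed independent of $\alpha$. I expect the main obstacle to be precisely the bookkeeping around the (possibly large) exponent $q=n/\theta$: one must check that the $L^2$--$L^{q}$ smoothing \eqref{L^q-L^r} is available at this exponent --- which is exactly what forces the $\theta$-dependence of $\delta_{24}$ --- and that the cancellation $\langle u\cdot\nabla v,v\rangle=0$ is justified, which is where the restriction $n\le 4$ is genuinely used. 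The remaining steps are routine Hölder and interpolation.
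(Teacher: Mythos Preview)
Your proof is correct and follows essentially the same route as the paper: the cancellation $\langle u\cdot\nabla v,v\rangle=0$ to replace $\nabla u$ by $\nabla w$, H\"older with the triple $\big(2,\,\tfrac{2n}{n-2\theta},\,\tfrac{n}{\theta}\big)$, Gagliardo--Nirenberg for $\|u\|_{2n/(n-2\theta)}$, the semigroup property together with \eqref{L^q-L^r} and \eqref{top} to bound $\|v(t)\|_{n/\theta}$, and Young's inequality; the paper in fact handles all $t>0$ in one stroke via $\|v(t)\|_{n/\theta}\le C(t/2)^{-n/4+\theta/2}\|T(t/2,0)u_0\|_2$, so your case split $t\gtrless 1$ is harmless but unnecessary. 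One small correction: the cancellation does not actually require $n\le 4$ here, since the standing hypothesis $\|V\|\le\delta_{23}=\delta_3(2,r_0)$ with $r_0>n$ already gives $v(t)\in L^n$ by \eqref{L^q-L^r}, and then $u\in L^{2_*}$, $\nabla v\in L^2$, $v\in L^n$ make $\langle u\cdot\nabla v,v\rangle$ integrable (with exponents summing to $1$) for every $n\ge 3$.
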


\begin{proof}
Given $\theta\in (0,1)$, we take
$r\in (2,2_*)$ such that
$1/r=1/2-\theta/n$, where
$1/2_*=1/2-1/n$.
Then we have
\begin{equation*}
\begin{split}
|\langle u\cdot\nabla u, v\rangle| 
&=|\langle u\cdot\nabla w, v\rangle| 
\leq \|u\|_r\|\nabla w\|_2\|v\|_{n/\theta} \\ 
&\leq C\|u\|_2^{1-\theta}\|\nabla u\|_2^\theta\|\nabla w\|_2\|v\|_{n/\theta}.
\end{split}
\end{equation*}
We fix $r_0\in (n/\theta,\infty)$ and assume
$\|V\|\leq \delta_{24}(\theta):=\delta_3(2,r_0)\leq \delta_{23}$, where
$\delta_3$ is the constant in Theorem \ref{linear-main}.
By \eqref{L^q-L^r} and \eqref{top} we have
\[
\|v(t)\|_{n/\theta}\leq Ct^{-\frac{n}{4}+\frac{\theta}{2}}\|T(t/2,0)u_0\|_2
\leq Ct^{-\frac{n}{4}+\frac{\theta}{2}}(1+t)^{-\alpha},
\]
from which the conclusion of the Lemma follows.
\end{proof}

We now establish   a bound  for  $\|w(t)\|_2$   
similar to the one obtained in Lemma \ref{justifi} for $\|u(t)\|_2$ .
\begin{lemma}
Let $n \geq 3$ and assume \eqref{basic}.
Given $\theta\in (0,1)$, suppose
$\|V\|\leq \delta_{24}$,
where $\delta_{24}=\delta_{24}(\theta)$ is the constant in Lemma \ref{last}.
Assume that
$u_0\in L^2_\sigma$ satisfies \eqref{top} for some $\alpha\geq 0$.
Let $u$ be any weak solution to \eqref{NS} with \eqref{SEI} and
set $w(t)=u(t)-T(t,0)u_0$.
Assume further that there are a function $g\in L^1_{loc}((0,\infty))$ with
$g(t)\geq 0$ and a constant $m>0$ such that
\begin{equation} 
\frac{m}{t}\|w(t)\|_2^2
\leq g(t) +\frac{1}{2}\|\nabla w(t)\|_2^2
\label{energy-2}
\end{equation}
for a.e. $t>0$.
Then we have
\begin{equation}
t^m\|w(t)\|_2^2\leq
s^m\|w(s)\|_2^2 
+\int_s^t \tau^m [g(\tau)+g_0(\tau)]d\tau
\label{weight-w}
\end{equation}
for all $t\geq s>0$, where
\[
g_0(\tau)
:=C_\theta\, \tau^{-n/2+\theta}(1+\tau)^{-2\alpha}
\|u(\tau)\|_2^{2(1-\theta)}\|\nabla u(\tau)\|_2^{2\theta}.
\]
\label{justifi-w}
\end{lemma}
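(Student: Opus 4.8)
The plan is to first convert the strong energy inequality \eqref{SEI4} for $w$ into a clean strong energy inequality whose only inhomogeneous term is $g_0$, and then to run the weighted-energy computation of Borchers and Miyakawa exactly as in the proof of Lemma \ref{justifi}, but now with the weight $\tau^m$ and with a positive starting time $s>0$ in place of $s=0$. Throughout I fix $\theta\in(0,1)$ and assume $\|V\|\le\delta_{24}(\theta)$, so that both Lemma \ref{en-w} (hence \eqref{SEI4}) and Lemma \ref{last} apply, since $\delta_{24}\le\delta_{23}$.

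First I would insert the pointwise estimate of Lemma \ref{last} into \eqref{SEI4}. Because the nonlinear term enters \eqref{SEI4} with a factor $2$, the $\frac14\|\nabla w\|_2^2$ produced by Lemma \ref{last} contributes $\frac12\|\nabla w\|_2^2$, which I absorb into the dissipation; this leaves exactly $\frac12\int_s^t\|\nabla w\|_2^2\,d\tau$, matching the coefficient in hypothesis \eqref{energy-2}. One obtains
\[
\|w(t)\|_2^2+\tfrac12\int_s^t\|\nabla w\|_2^2\,d\tau\le\|w(s)\|_2^2+\int_s^t g_0(\tau)\,d\tau
\]
for a.e.\ $s>0$ and all $t\ge s$, where $g_0$ is as in the statement (with $C_\theta$ chosen at least twice the constant of Lemma \ref{last}). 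Next, using \eqref{energy-2} in the form $\frac12\|\nabla w(\tau)\|_2^2\ge\frac{m}{\tau}\|w(\tau)\|_2^2-g(\tau)$ gives the analogue of \eqref{SEI3},
\[
\|w(t)\|_2^2+\int_s^t\frac{m}{\tau}\|w(\tau)\|_2^2\,d\tau\le\|w(s)\|_2^2+\int_s^t G(\tau)\,d\tau,\qquad G:=g+g_0,
\]
again for a.e.\ $s>0$ and all $t\ge s$.

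From here the argument mirrors Lemma \ref{justifi}. Fix $t>0$ and set $h(\sigma):=\int_\sigma^t\frac{m}{\tau}\|w(\tau)\|_2^2\,d\tau$; then $\sigma^m h(\sigma)$ is absolutely continuous on $[s,t]$ and $\frac{d}{d\sigma}\{\sigma^m h(\sigma)\}=m\sigma^{m-1}\{h(\sigma)-\|w(\sigma)\|_2^2\}$, so the last display (applied at lower limit $\sigma$, valid for a.e.\ $\sigma$) yields
\[
\frac{d}{d\sigma}\{\sigma^m h(\sigma)\}\le m\sigma^{m-1}\Big\{-\|w(t)\|_2^2+\int_\sigma^t G(\tau)\,d\tau\Big\}
\]
for a.e.\ $\sigma\in(0,t]$. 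Integrating over $(s,t)$ with $h(t)=0$, performing the integration by parts $\int_s^t m\sigma^{m-1}\int_\sigma^t G\,d\tau\,d\sigma=\int_s^t(\tau^m-s^m)G(\tau)\,d\tau$, and finally bounding $s^m h(s)\le s^m\|w(s)\|_2^2-s^m\|w(t)\|_2^2+s^m\int_s^t G$ via the last display at lower limit $s$, the terms $\pm\,s^m\int_s^t G$ cancel and $-s^m\|w(t)\|_2^2$ combines with $(t^m-s^m)\|w(t)\|_2^2$ to produce precisely \eqref{weight-w}.

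The main obstacle is the singularity of $g_0$ at $\tau=0$: since $g_0(\tau)$ behaves like $\tau^{-n/2+\theta}$ times $\|\nabla u(\tau)\|_2^{2\theta}$ and $\|\nabla u\|_2\in L^2_{loc}$, the weight $\tau^{-n/2+\theta}$ fails to be locally integrable near $0$ when $n\ge3$; this is exactly why $s=0$ is excluded (cf.\ the Remark before Lemma \ref{en-w}), why only $g_0\in L^1_{loc}((0,\infty))$ is asserted, and why every integral above is taken over $[s,t]$ with $s>0$, where it converges. A secondary point is that \eqref{SEI4}, and hence both displayed strong energy inequalities, hold only for a.e.\ $s>0$; promoting \eqref{weight-w} to every $s>0$ requires the right-continuity of $t\mapsto\|w(t)\|_2^2$ coming from the strong energy inequality together with the weak $L^2_\sigma$-continuity of $w$. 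I expect this approximation step, rather than the computation of the previous paragraphs, to be where some care is needed.
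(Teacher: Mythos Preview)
Your proposal is correct and follows essentially the same route as the paper: combine \eqref{SEI4} with Lemma \ref{last} to obtain the clean strong energy inequality $\|w(t)\|_2^2+\frac{1}{2}\int_s^t\|\nabla w\|_2^2\,d\tau\le\|w(s)\|_2^2+\int_s^t g_0\,d\tau$, then insert \eqref{energy-2} and repeat the Borchers--Miyakawa computation of Lemma \ref{justifi} with weight $\tau^m$ on $[s,t]$. Your attention to the a.e.\ $s$ versus all $s$ issue is more explicit than the paper's (which simply writes ``the same steps as in the proof of Lemma \ref{justifi}''), but note that for the application in Proposition \ref{est-nonlin} only the limit $s\to 0$ along a sequence is needed, so a.e.\ validity already suffices.
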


\begin{proof}
By the definition of $g_0$ above and the class of  weak solution 
we are considering  it follows  that $g_0\in L^1_{loc}((0,\infty))$.
Combining  \eqref{SEI4} with Lemma \ref{last} yields
\[
\|w(t)\|_2^2+\frac{1}{2}\int_s^t\|\nabla w\|_2^2 d\tau
\leq \|w(s)\|_2^2
+\int_s^tg_0(\tau)\,d\tau
\]
for a.e. $s>0$ and all $t\geq s$.
Fix $t>0$.
The last  estimate  combined with \eqref{energy-2} gives
\[
\|w(t)\|_2^2+h(s)\leq \|w(s)\|_2^2+\int_s^t [g(\tau)+g_0(\tau)]d\tau
\]
for a.e. $s\in (0,t]$, where
\[
h(s)=\int_s^t \frac{m}{\tau}\|w(\tau)\|_2^2 \,d\tau, \qquad s\in (0,t].
\]
The same steps as in the proof of Lemma \ref{justifi},
will lead to \eqref{weight-w}.
\end{proof}

Combining  the Fourier splitting method for $\|w(t)\|_2^2$
and  Lemma \ref{justifi-w} gives the  
following proposition.
\begin{proposition}
Let $n=3, 4$ and assume \eqref{basic}.
Given $\theta\in (0,1)$,
suppose 
$\|V\|\leq \delta_{25}=\delta_{25}(\theta)
:=\min\{\delta_{20}, \delta_{21}(\theta), \delta_{24}(\theta)\}$,
where $\delta_{20}$ and $\delta_{24}$ are the constants
in Proposition \ref{int-rep} and Lemma \ref{last}, respectively.
Assume that
$u_0\in L^2_\sigma$ satisfies \eqref{top} for some $\alpha\geq 0$.
Let $u$ be any weak solution to \eqref{NS} satisfying  \eqref{SEI} and
set $w(t)=u(t)-T(t,0)u_0$.
Then 
\begin{equation}
t^m\|w(t)\|_2^2\leq C_\theta\|u_0\|_2^{2\theta} [J_1(t)+J_2(t)]
\label{nonlinear} 
\end{equation}
for all $t>0$ and $m\geq n/2$, where
\[
J_1(t)
=\int_0^t \tau^{m-1-\frac{n}{2}+\theta}
\Big(\int_0^\tau (\tau-\sigma)^{-\frac{1}{2-\theta}}
\|u(\sigma)\|_2^2\,d\sigma\Big)^{2-\theta}d\tau,
\]
\[
J_2(t)
=\Big(\int_0^t \tau^{\frac{m-n/2+\theta}{1-\theta}}
(1+\tau)^{-\frac{2\alpha}{1-\theta}}
\|u(\tau)\|_2^2 \,d\tau\Big)^{1-\theta}.
\]
\label{est-nonlin}
\end{proposition}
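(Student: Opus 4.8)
The plan is to combine the Fourier splitting method for $\|w(t)\|_2^2$ with Lemma \ref{justifi-w}. Fix $\theta\in(0,1)$ and assume $\|V\|\le\delta_{25}(\theta)=\min\{\delta_{20},\delta_{21}(\theta),\delta_{24}(\theta)\}$, so that Proposition \ref{int-rep} (hence the restriction $n=3,4$), Lemma \ref{fourier} and Lemma \ref{justifi-w} are all available. First I would note that, by \eqref{integ} with $s=0$ and $u\cdot\nabla u=\mathrm{div}\,(u\otimes u)$, the difference $w(t)=u(t)-T(t,0)u_0$ has the representation
\[
w(t)=-\int_0^t T(t,\tau)P\,\mathrm{div}\,(u\otimes u)(\tau)\,d\tau,
\]
which is precisely the vector field of Lemma \ref{fourier}. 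In particular $w(t)\in L^2_\sigma$ and $\nabla w(t)\in L^2$ for a.e.\ $t$ (by \eqref{class-NS} and \eqref{cl-2}), and $\|w(s)\|_2$ stays bounded as $s\to0$ since $\|u(\cdot)\|_2\in L^\infty_{loc}$ and $\|T(s,0)u_0\|_2\le\|u_0\|_2$.

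Next I would apply the Fourier splitting $\|w(t)\|_2^2=\int_{|\xi|<\rho(t)}|\widehat w(\xi,t)|^2\,d\xi+\int_{|\xi|\ge\rho(t)}|\widehat w(\xi,t)|^2\,d\xi$ with $\rho(t)^2=2m/t$; bounding the high-frequency part by $\rho(t)^{-2}\|\nabla w(t)\|_2^2$ and multiplying by $m/t$ produces the hypothesis \eqref{energy-2} of Lemma \ref{justifi-w} with
\[
g(t)=\frac{m}{t}\int_{|\xi|<\sqrt{2m/t}}|\widehat w(\xi,t)|^2\,d\xi.
\]
To estimate $g(t)$ I would invoke Lemma \ref{fourier} with its free parameter chosen, for each fixed $t>0$, to be $2m(1+t)/t$ (legitimate since \eqref{duhamel} holds for every value of that parameter); as $\big(2m(1+t)/t\big)/(1+t)=2m/t$, this gives
\[
g(t)\le C_\theta\,m^{1+\frac{n}{2}-\theta}\|u_0\|_2^{2\theta}\,t^{-1-\frac{n}{2}+\theta}
\Big(\int_0^t(t-\sigma)^{-\frac{1}{2-\theta}}\|u(\sigma)\|_2^2\,d\sigma\Big)^{2-\theta},
\]
and, since $1/(2-\theta)<1$ and $\|u(\cdot)\|_2\in L^\infty_{loc}$, also $g\in L^1_{loc}((0,\infty))$, as required by Lemma \ref{justifi-w}.

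Then I would apply Lemma \ref{justifi-w} to get, for a.e.\ $s>0$ and all $t\ge s$, $t^m\|w(t)\|_2^2\le s^m\|w(s)\|_2^2+\int_s^t\tau^m[g(\tau)+g_0(\tau)]\,d\tau$, and let $s\to0$ along the full-measure set: the boundary term tends to $0$ (since $m>0$ and $\|w(s)\|_2$ is bounded near $0$) and the integral increases to $\int_0^t\tau^m[g(\tau)+g_0(\tau)]\,d\tau$ by monotone convergence. The bound on $g$ gives $\int_0^t\tau^m g(\tau)\,d\tau\le C_\theta m^{1+\frac n2-\theta}\|u_0\|_2^{2\theta}J_1(t)$ at once. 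For the other term, recalling $g_0(\tau)=C_\theta\,\tau^{-n/2+\theta}(1+\tau)^{-2\alpha}\|u(\tau)\|_2^{2(1-\theta)}\|\nabla u(\tau)\|_2^{2\theta}$, I would write
\[
\tau^m g_0(\tau)=C_\theta\big[\tau^{\frac{m-n/2+\theta}{1-\theta}}(1+\tau)^{-\frac{2\alpha}{1-\theta}}\|u(\tau)\|_2^2\big]^{1-\theta}\big[\|\nabla u(\tau)\|_2^2\big]^{\theta}
\]
and apply H\"older's inequality in $\tau$ with exponents $\tfrac{1}{1-\theta}$ and $\tfrac{1}{\theta}$, obtaining $\int_0^t\tau^m g_0(\tau)\,d\tau\le J_2(t)\big(\int_0^t\|\nabla u\|_2^2\,d\tau\big)^{\theta}\le C_\theta\|u_0\|_2^{2\theta}J_2(t)$ by \eqref{SEI2} with $s=0$. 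Adding the two estimates yields \eqref{nonlinear}, the power of $m$ being absorbed into $C_\theta$ (in the iteration $m$ runs over a bounded range with $m\ge n/2$; the condition $m\ge n/2$ also makes $\tau^m g$ and $\tau^m g_0$ integrable down to $\tau=0$).

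The individual steps are routine given the preceding lemmas, so there is no real analytical obstacle; the point that needs care is the mismatch between the splitting radius $\rho(t)^2=2m/t$ forced by the factor $\tfrac12\|\nabla w\|_2^2$ in \eqref{energy-2} and the radius $\sqrt{m/(1+t)}$ appearing in Lemma \ref{fourier}, which is circumvented by running Lemma \ref{fourier} with the $t$-dependent parameter $2m(1+t)/t$ and also accounts for the exponent $1+\tfrac n2-\theta$ of $m$. A secondary bookkeeping issue is that $g_0$ is in general only locally integrable on the open interval $(0,\infty)$, which is why Lemma \ref{justifi-w} delivers \eqref{weight-w} from $s>0$ and the passage $s\to0$ is carried out afterwards, the near-origin behaviour being controlled by $\|u(\cdot)\|_2\in L^\infty_{loc}$ and $\int_0^t\|\nabla u\|_2^2\,d\tau\le\|u_0\|_2^2$.
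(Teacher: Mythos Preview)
Your proof is correct and follows essentially the same approach as the paper: Fourier splitting with $\rho(t)^2=2m/t$, the representation \eqref{repre-w} for $w$, the low-frequency bound from Lemma \ref{fourier}, the weighted inequality from Lemma \ref{justifi-w}, and the passage $s\to 0$ using $m\ge n/2$. The only cosmetic difference is that the paper re-runs the proof of Lemma \ref{fourier} with $m/(1+t)$ replaced by $2m/t$, whereas you invoke the lemma itself with the $t$-dependent parameter $2m(1+t)/t$; and you spell out the H\"older step for $\int_0^t\tau^m g_0(\tau)\,d\tau$, which the paper leaves as ``easily found''.
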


\begin{proof}
For $w(t)$ we will use  estimate  \eqref{split}.
Let  $\rho(t)^2=\frac{2m}{t}$ to obtain  \eqref{energy-2}
with
\[
g(t)=
\frac{m}{t}\int_{|\xi|<\sqrt{\frac{2m}{t}}}|\widehat w(\xi,t)|^2 d\xi.
\]
 By Proposition \ref{int-rep} for $n=3, 4$, provided $\|V\|\leq \delta_{20}$,
we have the representation \eqref{repre-w}  for  $w(t)$ .
We follow the proof of Lemma \ref{fourier}, in which $m/(1+t)$
is replaced by $2m/t$, to obtain
\[
g(t)\leq C\|u_0\|_2^{2\theta}
\left(\frac{2m}{t}\right)^{1+n/2-\theta}
\Big(\int_0^t (t-\sigma)^{-\frac{1}{2-\theta}}
\|u(\sigma)\|_2^2\, d\sigma\Big)^{2-\theta},
\]
so long as
$\|V\|\leq \delta_{21}(\theta)$.
By Lemma \ref{justifi-w} we obtain \eqref{weight-w} with $g(t)$ given above,
in which we can take the limit $s\to 0$ since $m\geq n/2$.
By \eqref{SEI2} with $s=0$ we easily find
\[
\int_0^t\tau^m g_0(\tau)\,d\tau
\leq C\|u_0\|_2^{2 \theta}J_2(t),
\]
which completes the proof.
\end{proof}

We are now in a position to complete the proof of Theorem \ref{en-decay}.
\bigskip

\noindent
{\it Proof of Theorem \ref{en-decay}}.
We first prove \eqref{stability} and \eqref{rate}.
The proof is done by iteration based on \eqref{ite}.
We take, for instance,
$\theta=\theta_0=1/2$ and fix $m>n/2-1$.
Assume that 
$\|V\|\leq
\delta_{26}:=\min\{\delta_{13}(m), \delta_{22}(1/2)\}$,
where $\delta_{13}$ and $\delta_{22}$ are the constants in
Proposition \ref{lin-decay} and Proposition \ref{itera}, respectively.
Starting from $\|u(t)\|_2\leq \|u_0\|_2$ by \eqref{SEI2}, we see
that the function $J(t)$ given by \eqref{J} satisfies
\begin{equation}
J(t)\leq C(1+t)^{m-\frac{n}{2}+1}.
\label{J-1st} 
\end{equation}
Therefore we divide \eqref{ite} by $(1+t)^m$ to conclude
\eqref{stability} on account of \eqref{lin}.

We next fix $m>\max\{n/2+1, 2\alpha\}$.
By \eqref{ite} together with \eqref{top} we obtain
\[
\|u(t)\|_2^2\leq C(1+t)^{-m}+C(1+t)^{-2\alpha}+C(1+t)^{-m}J(t) 
\]
with \eqref{J-1st}.
We thus get \eqref{rate} for $2\alpha\leq n/2-1$.
For the opposite case we use
$\|u(t)\|_2^2\leq C(1+t)^{-n/2+1}$
to improve
\[
J(t)\leq
\left\{
\begin{array}{ll} 
C(1+t)^{m-\frac{3}{2}+\frac{\theta_0}{2}}, & n=3, \\ 
C(1+t)^{m-3+\theta_0}(\log (e+t))^{2-\theta_0}, \qquad & n=4. \\
\end{array} 
\right.
\]
Since we have taken $\theta_0=1/2$, we get \eqref{rate}
for $2\alpha\leq 5/4$ (resp. $2\alpha <5/2$)
when $n=3$ (resp. $n=4$).
Finally, we consider the case where $\alpha$ is large.
Given $\varepsilon >0$ arbitrarily small,
we employ \eqref{ite} with $\theta=2\varepsilon$
and make use of better rate of $L^2$ decay
(so that $\int_0^\infty \|u\|_2^2ds<\infty$)
to improve
\[
J(t)\leq C(1+t)^{m-1-\frac{n}{2}+2\varepsilon}
\]
which concludes \eqref{rate} under the condition
$\|V\|\leq \delta_{22}(2\varepsilon)$.
Note that the estimate of $J(t)$ cannot be improved any more.

We next prove \eqref{asym1} and \eqref{asym2}.
We take, for instance, $\theta=1/2$ again and assume that 
$\|V\|\leq\delta_5:=\min\{\delta_{26}, \delta_{25}(1/2)\}$
to conclude the first assertion of Theorem \ref{en-decay}.
We fix $m\geq n/2$.
Since we have \eqref{top} for $\alpha=0$, we find
\begin{equation*}
\begin{split}
J_1(t)\leq Ct^{m-\frac{n}{2}+1}\,
\Big[\frac{1}{t}\int_0^t \Big(\frac{2}{\tau}
&\int_0^{\tau/2}\|u(\sigma)\|_2^2\,d\sigma\Big)^{2-\theta}d\tau  \\
&+\frac{1}{t}\int_0^t\|u(\tau/2)\|_2^{2(2-\theta)}\,d\tau\Big]
\end{split}
\end{equation*}
and
\[
J_2(t)\leq Ct^{m-\frac{n}{2}+1}
\left(\frac{1}{t}\int_0^t\|u(\tau)\|_2^2\,d\tau\right)^{1-\theta},
\]
from which \eqref{nonlinear} yields
\[
\lim_{t\to\infty}t^{\frac{n}{2}-1}\|w(t)\|_2^2=0
\]
on account of \eqref{stability}.
This proves \eqref{asym1}.

Finally, we fix $\varepsilon_0\in (0,1/2)$ and assume that
$\|V\|\leq \delta_{22}(2\varepsilon_0)$.
We also fix $m\geq n/2+2\alpha+2\beta+1$, where $\beta$ is as in \eqref{rate}.
Since we know \eqref{rate}, we obtain
\[
J_1(t)
\leq C\left\{ 
\begin{array}{ll}
t^{m-\frac{n}{2}+1-4\alpha+2\theta\alpha} & (\alpha <1/2), \\
t^{m-\frac{n}{2}-1+\theta}(\log (e+t))^{2-\theta} \qquad & (\alpha=1/2), \\
t^{m-\frac{n}{2}-1+\theta} & (\alpha >1/2),
\end{array}
\right.
\]
and
\[
J_2(t)\leq Ct^{m-\frac{n}{2}+1-2\alpha-2\beta+2\theta\beta}
\]
for all $t>0$.
When $\alpha \in (0,1/2)$, then $\beta=\alpha$, so that both rates are same.
When $\alpha \geq 1/2$, we see that the decay rate of $J_2(t)$ is 
faster than that of $J_1(t)$.
Given $\varepsilon >0$ arbitrarily small,
we take $\theta= 2\varepsilon$ (resp. $\theta= \varepsilon$)
for $\alpha\neq 1/2$ (resp. $\alpha =1/2$)
and suppose
$\|V\|\leq\delta_{25}(2\varepsilon)$
(resp. $\delta_{25}(\varepsilon)$);
then, we obtain \eqref{asym2} from \eqref{nonlinear}.
Hence, under the condition
\[
\|V\|\leq \delta_6(\varepsilon)
:=\min\{\delta_{22}(2\varepsilon), \delta_{22}(2\varepsilon_0),
\delta_{25}(2\varepsilon), \delta_{25}(\varepsilon)\},
\]
we conclude the second assertion of Theorem \ref{en-decay}.
The proof is complete.
\hfill
$\Box$

\end{document}